\crefname{hypothesis}{Hypothesis}{Hypotheses}
\title{Linear and Non linear stability for the kinetic plasma \textit{sheath}  on a bounded interval\thanks{Submitted to the editors DATE.
\funding{This work was funded by the ANR project Muffin ( ANR-19-CE46-0004) }}}
\author{Mehdi Badsi\thanks{Laboratoire de Mathématiques Jean Leray, Nantes Université, UMR CNRS 6629, 2 Chemin de la Houssinière, 44322 Nantes Cedex 03
  (\email{mehdi.badsi@univ-nantes.fr}).}}
\DeclareMathOperator{\supp}{supp}
\DeclareMathOperator{\supess}{ess\, sup}
 \def\NN{{\mathbb N}}  
  \def\RR{{\mathbb R}}
\def\dd{\textnormal{d}}
\begin{document}

\maketitle

\begin{abstract}
Plasma sheaths are inhomogeneous stationary states that form when a plasma is in contact with an absorbing wall. We prove linear and non linear stability of a kinetic sheath stationary state for a Vlasov-Poisson type system in a bounded interval. Notably, in the linear setting, we obtain exponential decay of the fluctuation provided the rate of injection of particles at equilibrium is smaller than the rate of absorption at the wall. In the non linear setting, we prove a similar result for small enough equilibrium and small localized perturbation of the equilibrium.
\end{abstract}

\begin{keywords}
Vlasov-Poisson equations,  boundary value problem, plasma sheath, linear stability, non linear stability, first order delayed integral equation, non linear Poisson equation, exit geometric conditions
\end{keywords}

\begin{MSCcodes}
35Q83, 82B40, 82C40
\end{MSCcodes}

\section{Introduction}

In this paper, we study the dynamical behavior of solutions to a kinetic model of plasma-wall interaction.
When a plasma made of positive ions is in contact with a partially absorbing wall, the relative difference in mobility between ions and electrons yields the formation of a boundary layer near the wall which is characterized by the formation of a positive space charge. This charge creates an electric field which attracts the ions and repels the electrons at a rate which balances the loss of charges at the wall. It results in a permanent regime. This phenomenon is known as the plasma sheath \cite{Chen,Stangeby_2000}. The study of the plasma sheath is of significant importance in the design of laboratory plasma devices such as Tokamaks \cite{Munschy-Sheath, Manfredi,Valsaque}.  
In this work, we consider a bounded and uni-dimensional plasma made of only one kind of ions in which is immersed an infinite metallic wall. The time scale of interest in our study is that of ions so that electrons  are  assumed to be at a thermodynamical equilibrium.  We shall model the mesoscopic behavior of the ions using a kinetic approach where positions and velocities of the particles in the phase space $Q = (0,1) \times \RR$ are denoted $x$ and $v$. We then denote by $f(t,\cdot,\cdot) \geq 0$  the density of ions in the phase space $Q$ at time $t \geq 0$ and by $\phi(t,\cdot)$ the electrostatic potential at time $t.$
We are interested in long time stability properties of $(f,\phi)$ solution to the Vlasov-Poisson equations,
\begin{align}
    \partial_t f  + v \partial_x f  - \partial_{x}\phi(t,x) \partial_v f  = 0, \quad t > 0, (x,v) \in Q, \label{Vlasov-i}\\
    -\lambda^2 \partial_{xx} \phi(t,x) = \int_{\RR} f(t,x,v) \dd v - n_e(\phi(t,x)), \quad  t \in \RR^{+}, \quad x \in (0,1), \label{Poisson}
\end{align}
where $ \lambda > 0$ is a normalized Debye length. Here,
\begin{align}
n_e  : \RR \rightarrow \RR^{+}_{*} \textnormal{ is  } \mathscr{C}^{1} \textnormal{ and } n_{e}' > 0. \label{hyp_ne_1}
\end{align}
A typical example of such a function in the context of plasma physics is the so called Maxwell-Boltzmann density \cite{Chen} which is given by $n_e(\psi) = n_{0} e^{\psi}$ where $n_{0} > 0$ is a normalized density of electrons. To close the system \eqref{Vlasov-i}-\eqref{Poisson}, we have to define boundary conditions and to prescribe the initial data.
We thus define the set of incoming particles, the set of outgoing particles and the singular set by
\begin{align}
&\Sigma^{\textnormal{inc}} := \lbrace (0, v)  \: : \: v > 0 \rbrace \cup \lbrace (1, v) \: : \: v < 0 \rbrace,\\
&\Sigma^{\textnormal{out}} := \lbrace (0, v)  \: : \: v < 0 \rbrace \cup \lbrace (1, v) \: : \: v  > 0 \rbrace,  \\
&\Sigma^{0} := \lbrace (0,0) \rbrace \cup \lbrace (1,0 ) \rbrace.
\end{align}
We therefore prescribe $f(t,\cdot,\cdot)$ on $\Sigma^{\textnormal{inc}}$ as follows
\begin{align}
\forall t > 0,  \quad f(t,0,v) = \mu(v) \textnormal{ if } v > 0  \quad f(t,1,v) = 0 \quad \textnormal{ if } v < 0, \label{bc-f}
\end{align}
where $\mu$ denotes a stationary density of incoming particles  that comes from the plasma core while at the wall ($x =1$) particles are absorbed.
As for the electric potential, we impose the following Dirichlet boundary conditions
\begin{align}
\phi(t,0) = 0 , \quad \phi(t,1) = \phi_{b}, \label{bc-phi}
\end{align}
where the reference of potential is chosen to be at $x = 0$ (the plasma core) and $\phi_{b}\in \RR$ denotes the voltage at the metallic wall. The system \eqref{Vlasov-i}-\eqref{bc-phi} is eventually completed with the initial condition:
\begin{align}
    f(0,x,v) = f_{0}(x,v) \label{initial_condition}
\end{align}
where $f_{0}$ is the initial density of ions. 

Up to our knowledge, the wellposedness theory of Vlasov-Poisson equations in bounded domain has not been investigated in full details. Stationary solutions have been obtained in \cite{Raviart, Rein-stationary, Degond_Jaffard_Poupaud_Raviart_1996, badsi_krm, Badsi_Godard-Cadillac_2022}.  As for the time dependent problem, existence of weak solutions has been obtained by Ben Abdallah in \cite{Ben-Abdallah}, while uniqueness of the mild solution in the one dimensional  case was proven by Bostan in \cite{Bostan} for decreasing electric fields. In the case of the specular reflection boundary condition,  global wellposedness of classical solutions has been obtained by H-J Hwang and J.Velazquez in \cite{Velazquez_2008} using the assumption that the electric field points outward at the boundary. Recently,  Cesbron and Iacobelli in \cite{cesbron-iacobelli} proved a similar result for the Vlasov-Poisson equations with massless electrons. In the case of the half-space, Y. Guo in \cite{guo_1994} proved for the Vlasov-Poisson equations,  existence and uniqueness of weak solutions in the class of essentially bounded and of bounded variations functions.

As far as the stability of stationary solutions is concerned. The Vlasov-Poisson equations have a long history which dates back to the seminal paper by Landau \cite{Landau}. Indeed, in the absence of spatial boundaries, the stability of homogeneous (in space) equilibria  (either in the linear or in the non linear setting) has been studied by many authors. A non exhaustive list is \cite{Degond-Spectral-Theory, Villani_2012,Despres_VP,Han-Kwan-Rousset-Nguyen}. It is known that for a homogenous plasma whose equilibrium density of particles verifies the so called Penrose condition,  a slight perturbation of the equilibrium density creates an electric field which tends to zero in the long time asymptotic. This phenomenon is known as the Landau Damping.
As far as we know, in the presence of spatial boundaries and with inhomogeneous equilibria, there is no analogue to the Landau Damping.  Nevertheless, spectral stability or Lyapunov type stability using the Energy Casimir method have been carried in  \cite{Strauss_2013,Rein_1994,Ben_Artzi_2011,Badsi_2016, Hwang_2016,Guo-BGK-Waves}. We mention that in the use of the Energy Casimir method, very often a structural assumption on the equilibrium density is assumed. Namely, it has to be a decreasing function of the microscopic energy. As for the spectral analysis, a symmetric structure is needed to study the spectrum of the appropriate operator.  Unfortunately, in the case of plasma sheath equilibria, we have not been able to use these tools in a satisfactory way, notably because the so called Bohm condition  \eqref{Bohm} prevents the non trivial equilibrium from being a decreasing function of the microscopic energy.

The approach used in this work is different and is much more in the spirit of the work of Glass, HanKwan and Moussa \cite{HK-Glass-Moussa} to study the stability of equilibria for the Vlasov-Navier-Stokes system. Our approach fully exploits the structure of the transport operator for the Vlasov equation, and the fact that the non trivial sheath equilibrium yields an electric field which is uniformly positive. At equilibrium, all the trajectories leave the phase space in some uniform time. This yields a kind of first order delayed Grönwall inequality satisfied by the $L^{1}$ norm of the perturbation for times larger than the time of exit of all the trajectories. Our linear stability Theorem \ref{stability_linear_eq}  then roughly says that if the rate of injection of particles at equilibrium in the plasma core is smaller than the rate of absorption of particles at the wall then any perturbation is prone to decay exponentially fast. Our non linear stability Theorem \ref{non_linear_stab_thm} is a perturbative variant of the linear stability result. We prove that if the equilibrium density is small enough then a small localized perturbation of the equilibrium density  yields that all the characteristics associated to the non linear Vlasov-Poisson equations still leave the phase space in some uniforme time. We obtain as in the linear analysis an exponential decay of the perturbation for times larger than the time of exit of all the perturbed trajectories. The difficult part in the non linear analysis consists in propagating the local in time stability estimates. This is done by a continuation argument: using the delayed Grönwall inequality we show that if the initial fluctuation is small enough then the perturbation on the electric field stays bounded by a constant, say $C$, up  to a critical time. We then establish that this bound on the electric field propagates after the critical time provided the semi-norm in $W^{1,1}$ of the equilibrium density is chosen small enough. So far, we mention that in the non linear analysis, only stability estimates are given, existence and uniqueness of the global mild solution for the non linear system is not addressed directly in this paper and is left for a future work.

This work is organized as follows. We study the stationary solutions in Section \ref{sec:stationary_solution}. Then,   we study the linearized equations in Section \ref{sec:linear_stab}. We notably prove the linear stability result given in Theorem \ref{stability_linear_eq}. The proof relies on linear elliptic estimates for the linearized Poisson equation that are given in Section \ref{sec:ll_est},  a careful study of the stationary phase portrait that is done in Section \ref{sec:study_of_the_stationary_characteristics} and a delayed Grönwall inequality satisfied by the $L^{1}$ norm of the perturbation for large times that is studied in Section \ref{sec:delayed_gronwall}. We then study the non linear stability in Section \ref{sec:non_linear_stab} using exactly the same strategy. We establish wellposedness and give non linear elliptic estimates for the non linear Poisson equation in Section \ref{sec:nl_ell_est}. Then we study, in Section  \ref{sec:gen_characteristics_study},  the stability of the stationary phase portrait with respect to small perturbations on the equilibrium electric field. We prove local stability estimates in Section \ref{sec:loc_stab_est}  and conclude the proof of the non linear stability result by a continuation argument in Section \ref{sec:continuation}.

\section{The stationary \textit{sheath} for \textbf{(VP)}}\label{sec:stationary_solution}
In this section, we recall the construction of the non trivial stationary \textit{sheath} obtained in \cite{badsi_krm}. We provide additional estimates that were not in \cite{badsi_krm}.  Let us begin by introducing the concept of solutions we consider. Due to the boundary conditions, solutions to transport equations in bounded domain are rarely classical.  We shall generically consider weak solutions for the stationary Vlasov equation in the following sense.
\begin{definition} \label{def_ws_stat} Let $ \mu \in L^{1}(\RR^{+})$, $\phi_{b} \in \RR.$ Let $(f, \phi) \in L^{1}(Q) \times W^{2,\infty}(0,1).$ We say that $(f,\phi)$ is a stationary solution for the Vlasov-Poisson system \eqref{Vlasov-i}-\eqref{bc-phi} if

\begin{itemize}
    \item[a)] $f$ is a weak solution of the Vlasov equation: for all $\psi \in \mathscr{C}^{1}_{c}\big(\overline{Q}\big)$ such that $\psi_{|\Sigma^{\textnormal{out}}} = 0$ 
    \begin{align}
    \int_{Q} f(x,v) \Psi(x,v) \dd x \dd v = \int_{0}^{+\infty} \mu(v) v \psi(0,v) \dd v,
    \end{align}
where $\Psi(x,v) = v \partial_{x} \psi(x,v) - \partial_{x} \phi(x) \partial_{v} \psi(x,v)$.
\item[b)] $\phi$ verifies the boundary conditions $\phi(0) = 0$, $\phi(1) = \phi_{b}$ and it satisfies the Poisson equation: 
\begin{align}
     \:  -\lambda^2 \partial_{xx} \phi(x) = \int_{\RR} f(x,v)\dd v - n_e(\phi(x)) \textnormal{ for almost every } x \in (0,1).
\end{align}
\end{itemize}
\end{definition}
In our setting, the terminology \textit{sheath} refers to a non trivial stationary solution in the sense of the previous definition such that:
\begin{itemize}
\item $x \in [0,1] \longmapsto \displaystyle \int_{\RR} f (x,v) \dd v$ is continuous.
\item the plasma is neutral at $x = 0,$ that is $\int_{\RR} f(0,v) \dd v = n_{e}( \phi(0)).$
\item  the electric potential is monotone decreasing, concave, and it varies strongly in a neighborhood, of  size $\lambda$, of $x=1.$ 
\end{itemize}
This is what we obtain in the following.
\begin{theorem}[The stationary \textit{sheath}] \label{theorem_equilibrium}Let $\phi_{b} < 0.$ Assume \eqref{hyp_ne_1} and that the function
\begin{align}
s \in \RR^{-} \longmapsto \exp(-s) n_{e}(s) \label{hyp_ne_2} \textnormal{ is concave. }
\end{align}
 Let $\mu \in \mathscr{C}^{1}_{c}(\RR^{+}; \RR^{+})$ be such that 
\begin{align} \label{neutrality}
    \int_{0}^{+\infty} \mu(v)dv = n_e(0),
\end{align}
and
\begin{align} \label{Bohm}
    \int_{0}^{+\infty} \frac{\mu(v)}{v^2} dv < n_e'(0).
\end{align}
Then, for any $\lambda > 0$, there exists a unique weak stationary solution $(f^{\infty},\phi^{\infty}) \in \mathscr{C}^{0}\Big( [0,1] ; L^{1}(\RR) \Big) \times \mathscr{C}^{2}[0,1]$ to \textbf{(VP)} in the class of decreasing and concave potentials. Moreover the solution satisfies
\begin{itemize}
    \item[a)] 
    \begin{align}
      f^{\infty}(x,v) &= \mathbf{1}_{D^{+}}(x,v) \mu\Big( \sqrt{v^2 + 2 \phi^{\infty}(x) } \Big),  \label{f_inf} \\
       \textnormal{ where } \quad D^{+} &= \Big \lbrace (x,v) \in Q \: : \: v > \sqrt{-2 \phi^{\infty}(x) } \Big \rbrace\nonumber.  
    \end{align}
    \item[b)] There are constants $0 < \alpha \leq \beta $ which depend only $\mu, n_e$ and $\phi_{b}$ but not on $\lambda$ such that the following estimates hold
    \begin{align}
        \int_{0}^{1} \frac{\lambda^2}{2} \Big | \partial_{x} \phi^{\infty}(x) \Big |^2+ \frac{\alpha}{2} | \phi^{\infty}(x) |^2 \dd x \underset{\lambda \rightarrow 0^{+}} = \mathcal{O}(\lambda) \label{energy_estimate}, \\
         \forall x \in [0,1], \:   \vert \phi_{b} \vert \frac{\sinh(\frac{\sqrt{\beta}x}{\lambda})}{\sinh(\frac{\sqrt{\beta}}{\lambda})} \leq \vert \phi^{\infty}(x) \vert \leq \vert \phi_{b} \vert \frac{\sinh(\frac{\sqrt{\alpha}x}{\lambda})}{\sinh(\frac{\sqrt{\alpha}}{\lambda})}. \label{estimate_ponct_phi_eq}\\
         \partial_{x} \phi^{\infty}(0) < 0, \quad |\partial_{x} \phi^{\infty}(0) | \leq  \frac{ \frac{\sqrt{\alpha}}{\lambda} }{ \sinh(\frac{\sqrt{\alpha}}{\lambda})} \label{dx_phi_0}.
    \end{align}
    \item[c)] $\phi^{\infty}$ converges to zero locally uniformly on $[0,1)$ as $\lambda \longrightarrow 0^{+}.$
    \item[d)] The plasma is quasi-neutral
    \begin{align}
        \forall p \in [1, +\infty), \quad \lim_{\lambda \rightarrow 0^{+}}  \limits \Big \Vert \int_{\RR} f^{\infty}(\cdot,v) \dd v - n_e(\phi^{\infty}) \Big \Vert_{L^{p}(0,1)} = 0.
    \end{align}
\end{itemize}
\end{theorem}
\begin{proof} The existence and uniqueness of a pair $(f^{\infty},\phi^{\infty}) \in \mathscr{C}^{0}\left([0,1] ; L^{1}(\RR) \right) \times \mathscr{C}^{2}[0,1]$ with $f^{\infty}$ given by \eqref{f_inf}  and $\phi^{\infty}$ non increasing and concave follows essentially the lines of \cite{badsi_krm}. The regularity $f^{\infty} \in \mathscr{C}([0,1];L^{1}(\RR))$ follows by direct estimates using the explicit form \eqref{f_inf} and using the fact that $\mu$ is compactly supported. We thus only prove the claim b), c) and d) which are new.  The starting point in the analysis consists in noticing that  $\phi^{\infty}$ minimizes the functional given by
\begin{align}
\forall \psi \in \mathcal{C}, \quad  J(\psi) = \displaystyle \int_{0}^{1} \frac{\lambda^2}{2} | \psi'(x) |^2 + q(\psi(x)) \dd x
\end{align}
where $\mathcal{C} = \lbrace u \in H^{1}[0,1] \: : \: \phi_{b} \leq u \leq 0 \text{ in } [0,1], \: \: u(0) = 0, \: u(1) = \phi_{b} \rbrace.$
Here, $q \in \mathscr{C}^{2}[\phi_{b},0]$ is a potential that verifies
\begin{align}
\forall s \in [\phi_{b},0], \quad q'(s) = -\int_{0}^{+\infty} \frac{\mu(w)w}{\sqrt{w^2-2s}} \dd w +n_{e}(s),\\
q(0) = q'(0) = 0, \: q''(0) > 0,\\
 \phi_{b} \leq s < 0 \Longleftrightarrow \quad q'(s) < 0.
\end{align}
With such a properties of the potential $q$, the function $s \in [\phi_{b},0[ \mapsto \displaystyle \frac{q'(s)}{s} $ is  continuous and positive, and since $q'(0) = 0$ it extends by continuity at $s = 0$ with $\lim_{s \rightarrow 0^{-}} \limits \frac{q'(s)}{s} = q''(0) > 0$.  We then set $\alpha := \underset{ s \in [\phi_{b},0]  } \inf \frac{q'(s)}{s} $ and $\beta := \underset{ s \in [\phi_{b},0] } \sup \frac{q'(s)}{s} $ which are well-defined and positive numbers with $0 < \alpha \leq \beta.$  We then get by definition of the numbers $\alpha$ and $\beta$ that
\begin{align}
\forall s \in [\phi_{b},0], \quad \beta s \leq q'(s) \leq \alpha s, \label{linear_term}\\
\forall s \in [\phi_{b},0], \quad  \frac{\alpha s^2}{2} \leq q(s) \leq  \frac{\beta s^2}{2}, \label{quadratic_potential}
\end{align}
where the second inequality follows from the first inequality by integration. We are now ready to prove  claims b),c) and d).

\emph{ Proof of b).}
Let us start with  \eqref{energy_estimate}.
Using the inequality \eqref{quadratic_potential} and the fact $\phi^{\infty}$ minimizes $J$ on $\mathcal{C}$ we have
\begin{align}
 \int_{0}^{1} \frac{\lambda^2}{2} \Big | \partial_{x} \phi^{\infty}(x) \Big |^2+ \frac{\alpha}{2} | \phi^{\infty}(x) |^2 dx \leq J(\phi^{\infty}) \leq J(\psi) 
\end{align}
where $\psi \in \mathcal{C}$ is the solution to the linear elliptic equation 
\begin{align}
-\lambda^2 \partial_{xx} \psi + \beta \psi = 0 \text{ a.e  in } (0,1), \label{toto}
\end{align}
 with the boundary conditions $\psi(0) = 0$ and $\psi(1) = \phi_{b}$. The solution is given explicitly by
\begin{align}
 \forall x \in [0,1],\: \psi(x) = \phi_{b} \frac{\sinh(\frac{\sqrt{\beta}x}{\lambda})}{\sinh(\frac{\sqrt{\beta}}{\lambda})}.
\end{align}
Taking the $L^2$ inner product of the linear elliptic equation  \eqref{toto} with $\frac{\psi}{2}$ and using an integration by parts, we obtain 
\begin{align*}
&J(\psi) = \displaystyle \int_{0}^{1} -\frac{\beta  |\psi(x)|^2}{2} + q(\psi(x))\dd x  + \frac{\lambda^2}{2} \partial_{x} \psi(1) \phi_{b} \leq \\
&\frac{\lambda^2}{2} \partial_{x} \psi(1) \phi_{b} = \frac{\lambda}{2} \phi_{b}^2 \sqrt{\beta}  \frac{1}{\tanh \left(  \frac{\sqrt{\beta}}{\lambda}  \right) }.
\end{align*}
For $\lambda >0$ small enough, we have $1 \leq \frac{1}{ \tanh \left(  \frac{\sqrt{\beta}}{\lambda}  \right)} \leq \frac{3}{2}$. Therefore since $\phi_{b}$ is independent of $\lambda$ we obtain  $\frac{\lambda^2}{2} \partial_{x} \psi(1) \phi_{b} \underset{\lambda \rightarrow 0^{+}} {=} \mathcal{O}(\lambda)$ as expected.
We now prove \eqref{estimate_ponct_phi_eq}. Consider the  function $e= \phi^{\infty} - \psi$ where $\psi$ is the solution to \eqref{toto}. Then the difference verifies $e \in H^{2}(0,1) \cap H^{1}_{0}(0,1)$ and
\[
 -\lambda^2 \partial_{xx} e + \beta e \leq 0 \textnormal{ a.e in } (0,1).
\]
A maximum principle yields $e \leq 0$ everywhere in $[0,1]$ and therefore $\phi^{\infty} \leq \psi$ which is the first inequality. Arguing similarly, one gets $\phi^{\infty}(x) \geq \phi_{b} \frac{\sinh(\frac{\sqrt{\alpha}x}{\lambda})}{\sinh(\frac{\sqrt{\alpha}}{\lambda})}$ for all $x \in [0,1].$ Hence
\[
\forall x \in [0,1], \quad \phi_{b} \frac{\sinh(\frac{\sqrt{\alpha}x}{\lambda})}{\sinh(\frac{\sqrt{\alpha}}{\lambda})}  \leq \phi^{\infty}(x) \leq \phi_{b} \frac{\sinh(\frac{\sqrt{\beta}x}{\lambda})}{\sinh(\frac{\sqrt{\beta}}{\lambda})}.
\]
Then for $x \in (0,1],$
\[
 \phi_{b} \frac{\sinh(\frac{\sqrt{\alpha}x}{\lambda})}{x \sinh(\frac{\sqrt{\alpha}}{\lambda})}  \leq \frac{ \phi^{\infty}(x) }{x}  \leq \phi_{b} \frac{\sinh(\frac{\sqrt{\beta}x}{\lambda})}{x\sinh(\frac{\sqrt{\beta}}{\lambda})}.
\]
Since $\phi^{\infty}(0) = 0$ and $\phi^{\infty}$ is differentiable at $x = 0$, taking the limit as $x \rightarrow 0^{+}$ yields 
\[
\phi_{b} \frac{ \sqrt{\alpha} }{ \lambda \sinh(\frac{\sqrt{\alpha}}{\lambda})} \leq \partial_{x} \phi^{\infty}(0) \leq \phi_{b} \frac{ \sqrt{\beta} }{ \lambda \sinh(\frac{\sqrt{\beta}}{\lambda})}.
\]
This inequality shows in particular that for $\lambda > 0$ and $\phi_{b} < 0$,  $\partial_{x} \phi^{\infty}(0) < 0$ and that $| \partial_{x} \phi^{\infty}(0) | \leq \vert \phi_{b} \vert \frac{ \sqrt{\alpha} }{ \lambda \sinh(\frac{\sqrt{\alpha}}{\lambda})}.$

\emph{Proof of c).} Thanks to \eqref{estimate_ponct_phi_eq},  for every $ 0 < r <  1,$
\[
\underset{ x \in [0,r]}{ \sup } \vert \phi^{\infty}(x) \vert \leq \vert \phi_{b} \vert \frac{\sinh(\frac{\sqrt{\alpha} r }{\lambda})}{\sinh(\frac{\sqrt{\alpha}}{\lambda})}  \longrightarrow 0 \textnormal{ as } \lambda \longrightarrow 0^{+}.
\]
\emph{ Proof of d).} Note that we have the simple bound for all $p \in [1,+\infty)$ and $\psi \in [\phi_{b},0],$

\[
\: \Big \vert \int_{\RR^{+}} \frac{\mu(v) v }{\sqrt{v^2 - 2\psi}} \dd v - n_{e}( \psi) \Big \vert ^{p} \leq 2^{p-1} \left(  \Big \vert \int_{\RR^{+}} \mu(v) \dd v \Big \vert^{p} +\Big( \underset{ \psi \in [\phi_{b},0] }{\sup}  |n_{e}(\psi)| \Big)^{p}  \right).
\]
Also observe thanks to c) that for $x \in [0,1)$, $\phi^{\infty}(x) \longrightarrow 0$ as $\lambda \longrightarrow 0^{+}.$ Then, since $q'$ is continuous on $[\phi_{b},0]$, we have $q'(\phi^{\infty}(x))\longrightarrow q'(0) = 0$ as $\lambda \longrightarrow 0^{+}.$ An application of the Lebesgue dominated convergence theorem yields the expected result.
\end{proof}
\begin{remark} In \cite{badsi_krm} the solution of the Vlasov equation is constructed by the method of the characteristics: it is in fact a mild solution. 
\end{remark}
Since $\mu \in \mathscr{C}^{1}_{c}(\RR^{+})$ the inequality \eqref{Bohm} yields the necessary condition
\begin{align}
\mu(0) = \mu'(0) = 0. \label{mu_local_quadra}
\end{align}
Consequently, the equilibrium density has the regularity $f^{\infty} \in \mathscr{C}^{1}\big(Q) \cap W^{1,1}(Q)$ and its partial derivative in the vertical direction is given for $(x,v) \in Q$ by \begin{align} \label{dv_feq}
  \partial_{v}f^{\infty}(x,v) = \mathbf{1}_{D^{+}}(x,v)  \frac{v \mu'\Big( \sqrt{v^2 + 2 \phi^{\infty}(x)} \Big)}{\sqrt{v^2 + 2 \phi^{\infty}(x)} }.
\end{align}
From now on, we set ourself in the framework of Theorem \ref{theorem_equilibrium}. In particular, the two parameters $\lambda >0$ and $\phi_{b} < 0$ are supposed to be fixed and we assume that we are given a stationary sheath solution for \textbf{(VP)}. We thus discard these two parameters in the following mathematical statements unless it is specifically needed for the understanding.

\section{Linear stability} \label{sec:linear_stab}
We firstly investigate the linear stability of the equilibrium $(f^{\infty},\phi^{\infty})$ given by the Theorem \ref{theorem_equilibrium}.
For small enough $\varepsilon > 0$ we want to investigate the dynamic of the first order perturbation of the equilibrium, namely we write
\begin{equation}
\begin{cases}
f(t,x,v)   &=  f^{\infty}(x,v) +  \varepsilon h(t,x,v) + \rm{o}(\varepsilon),\\
\phi(t,x) & =  \phi^{\infty}(x) + \varepsilon U(t,x) + \rm{o}(\varepsilon),
\end{cases}
\end{equation}
where $(h,U)$ denotes the first order fluctuation and $\rm{o}(\varepsilon)$ denotes formal higher order fluctuations. Formally, the first order fluctuation verifies the linearized Vlasov-Poisson equations
\begin{align}
    \partial_{t} h + v \partial_{x} h - \partial_{x} \phi^{\infty} \partial_{v} h =  \partial_x U(t,x) \partial_{v}f^{\infty}, \quad t > 0, \quad (x,v) \in Q, \label{linear-Vlasov-i}\\
    -\lambda^2 \partial_{xx} U(t)+ n_e'(\phi^{\infty}) U(t) = \int_{\RR} h(t,\cdot,v) \dd v, \quad  t \geq 0, \quad x \in (0,1) \label{linear-Poisson}.
\end{align}
We prescribe $h(t,\cdot,\cdot)$ on $\Sigma^{\textnormal{inc}}$ as follows
\begin{align}
  \forall t > 0, \quad  h(t,0, v) = 0 \textnormal{ if } v > 0, \quad  h(t,1,v) = 0 \textnormal{ if } v < 0.
\end{align}
It means that there is no fluctuation on the incoming density of particles at the boundary. For the fluctuating potential, we also neglect the fluctuation at the boundary so that it is assumed to verify the homogeneous Dirichlet boundary conditions
\begin{align}
U(t,0) = 0, \quad U(t,1) = 0.
\end{align}
The system is supplemented with the initial condition
\begin{align} 
    h(0,x,v) = h_{0}(x,v) \label{IC}
\end{align}
where $h_{0}$ represents the initial fluctuation of density in the phase space $Q$.
We shall denote \textbf{(LVP)} the set of linearized equations \eqref{linear-Vlasov-i}-\eqref{IC}. The wellposedness study will be performed in the following Banach spaces. For $T > 0$ and $\gamma > 0$, we define the spaces
\begin{align} \label{functional_spaces}
X_{T,\gamma} := L^{\infty} \Big( [0,T] ; L^{1}(Q) \Big), \quad Y_{T,\gamma}:= L^{\infty} \Big( [0,T]; W^{2,1}(0,1)  \Big),
\end{align}
endowed with the norms defined for any couple of functions $(f,g) \in X_{T,\gamma} \times Y_{T,\gamma}$ by
\begin{align}
\| f\|_{X_{T,\gamma}} := \underset{ 0 \leq t \leq T}{ \supess }  \: e^{-\gamma t} \| f(t) \|_{L^{1}(Q)}, \quad \| \phi \|_{Y_{T,\gamma}} :=  \underset{ 0 \leq t \leq T}{ \supess }  \: e^{-\gamma t} \| \phi(t) \|_{W^{2,1}(0,1)}. \label{norms}
\end{align}
 Before going deeper in the analysis, let us recall that for a generic measurable function $f : Q \longrightarrow \RR$ we call support of $f$ and denote $\supp f$ the complement in $Q$ of the largest open set $O \subset Q$ on which $f$ is almost everywhere equal to zero. In the sequel we will use the abreviation a.e in place of almost everywhere.  For $r \geq 0$, we define the following subset of $Q$
\begin{align} \label{D_r}
D^{+}_{r} :=  \Big \lbrace (x,v) \in Q \: : \: v > \sqrt{-2\phi^{\infty}(x) + r^2} \Big \rbrace
\end{align}
and define the associated time
\begin{align} \label{T_r}
 T_{r} := \begin{cases}
 \frac{\sqrt{-2\phi_{b}}}{\vert \partial_{x} \phi^{\infty}(0)\vert} \quad \textnormal{ if } r = 0, \\
 \frac{1}{r} \quad \textnormal{ if } r > 0.
 \end{cases}
\end{align}
The number $T_{r}$ is an upper bound for the time of travel of a particle in the domain $D^{+}_{r}$.
Note that for $r = 0$ we have $D^{+}_{r} = D^{+}$ where $D^{+}$ is given in \eqref{f_inf}. We eventually define the following  constant 
\begin{align} \label{def:kappa_star}
\kappa^{\star}_{L} = - \frac{1}{T_{r} } \log\Big( \frac{ 2\| \partial_{v} f^{\infty} \|_{L^{1}(D^{+}_{r})} T_{r} }{ \lambda^2}  \Big)
\end{align}
which is a lower bound for the rate of decay of the fluctuation. Observe that if $ \frac{ 2\| \partial_{v} f^{\infty} \|_{L^{1}(D^{+}_{r})} T_{r} }{ \lambda^2}   < 1$ then $\kappa^{\star}_{L} > 0$.

The main result of this section is the following.
\begin{theorem}[Linear stability] \label{stability_linear_eq}
Let $r \geq 0$. Consider the equilibrium $(f^{\infty},\phi^{\infty})$ given by Theorem \eqref{theorem_equilibrium} where $\mu$ is such that $\supp \mu \subset (r,+\infty)$.  For all $h_{0} \in L^{1}(Q)$, the linearized system \textbf{(LVP)} admits a unique global mild-strong solution $(h,U)$ in the sense of Definition \eqref{def_mild_sol}. In addition, we have
 \begin{align}
 (h,U) \in \mathscr{C}\Big( \RR^{+}; L^{1}(Q) \Big) \times \mathscr{C}\Big( \RR^{+}; W^{2,1}(0,1) \Big)\label{regularity_in_time}
 \end{align}
and the solution satisfies
 \begin{itemize}
 \item[a)] \emph{Decay of the $L^{1}$ norm in  $Q \setminus D^{+} $}.
There is a family of non increasing Borel sets $(\mathcal{O}_{t})_{ t \geq 0 } \subset Q \setminus  D^{+}$ which become empty in finite time such that for every $t \geq 0$
 \begin{align}
 \| h(t) \|_{L^{1} \Big( Q\setminus D^{+} \Big) } =\| h_{0} \|_{L^{1}\Big( \mathcal{O}_{t} \Big)}. \label{l1_decrease}
 \end{align}

\item[b)] 
\emph{Exponential decay  of the $L^{1}$ norm in $D^{+}_{r}$}.
Provided
 \begin{align}
\frac{ 2\| \partial_{v} f^{\infty} \|_{L^{1}(D^{+}_{r})} T_{r} }{ \lambda^2} < 1 \label{stability_condition},
\end{align}
and  $\supp h_{0} \subset D^{+}_{r}$ we have for every $t \geq 0$
 \begin{align}
\supp h(t) \subset  D^{+}_{r}
 \end{align}
 and there are constants $\kappa > \kappa^{\star}_{L} > 0$ and $C \geq 0$ such that for every $t \geq 0$ 
  \begin{align} 
& \| h(t) \|_{L^{1}(D^{+}_{r})} \leq C \exp(- \kappa t),\label{exponential_decay_h}\\
 &\| \partial_{x} U(t) \|_{L^{\infty}(0,1)} \leq \frac{2 C }{\lambda^2}  \exp(-\kappa t). \label{exponential_decay_u}
 \end{align}
\end{itemize}
where $\kappa^{\star}_{L}$ is given in \eqref{def:kappa_star}.
\end{theorem}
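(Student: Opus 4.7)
The plan combines a Banach fixed point for well-posedness with a delayed Grönwall inequality for the decay, resting on two ingredients: the Hamiltonian (and hence measure-preserving) structure of the equilibrium transport, and an $L^{1}\to L^{\infty}$ elliptic estimate of the form $\|\partial_x U(t)\|_{L^\infty(0,1)} \leq (2/\lambda^2)\|h(t)\|_{L^1(Q)}$ for the linearized Poisson equation (obtained from the Green's function of $-\lambda^2 u''$ on $(0,1)$, the positive zeroth-order term $n_e'(\phi^\infty)U$ being handled by a maximum principle). For well-posedness I would run a Picard iteration in $X_{T,\gamma}$ on the operator that sends $h$ to the Duhamel representation along equilibrium characteristics with source $\partial_x U \cdot \partial_v f^\infty$, where $U$ is recovered from $\int_\RR h\,dv$ via the Poisson equation. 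Measure preservation bounds the source contribution by $\|\partial_x U\|_{L^\infty}\|\partial_v f^\infty\|_{L^1(Q)}$, so taking $\gamma$ large compared to $\|\partial_v f^\infty\|_{L^1(Q)}/\lambda^2$ yields contraction; uniqueness and the continuity \eqref{regularity_in_time} then follow from the representation formula.

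For part a), the explicit formula \eqref{dv_feq} gives $\supp \partial_v f^\infty \subset D^+$, so on $Q\setminus D^+$ the Vlasov equation reduces to the homogeneous transport along equilibrium characteristics with zero inflow. Its solution at $(t,x,v)$ equals $h_0$ at the foot of the backward characteristic whenever that characteristic has stayed in $Q\setminus D^+$ throughout $[0,t]$, and zero otherwise. Defining $\mathcal{O}_t \subset Q\setminus D^+$ as the set of initial points whose forward trajectory remains in $Q\setminus D^+$ for all $s \in [0,t]$, measure preservation of the equilibrium flow yields \eqref{l1_decrease}, while the finite-time emptiness of $\mathcal{O}_t$ follows from the uniform exit time of every trajectory in $Q\setminus D^+$, which is established in the stationary phase portrait analysis of Section \ref{sec:study_of_the_stationary_characteristics}.

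For part b), the assumption $\supp \mu \subset (r,+\infty)$ combined with \eqref{dv_feq} gives $\supp \partial_v f^\infty \subset D^+_r$; since $D^+_r$ is $\{v>0\}$ intersected with a super-level set of the conserved microscopic energy $\tfrac{v^2}{2}+\phi^\infty(x)$, the initial support in $D^+_r$ propagates. The core of the proof is the delayed Grönwall inequality
\begin{equation*}
\|h(t)\|_{L^1(D^+_r)} \leq \frac{2 \|\partial_v f^\infty\|_{L^1(D^+_r)}}{\lambda^2} \int_{\max(0,\, t - T_r)}^t \|h(s)\|_{L^1(D^+_r)}\, ds, \qquad t \geq T_r,
\end{equation*}
derived from the Duhamel formula by Fubini, the measure-preserving change of variables along the equilibrium Hamiltonian flow, and the elliptic estimate on $\partial_x U$. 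The integration window of length $T_r$ reflects the uniform exit time of any trajectory in $D^+_r$ through the outflow boundary: for $r > 0$ it is immediate from $|v| > r$ on $D^+_r$, while for $r = 0$ the explicit value $T_0 = \sqrt{-2\phi_b}/|\partial_x \phi^\infty(0)|$ comes from integrating the nonlinear characteristic equation using the monotonicity and concavity of $\phi^\infty$. Writing $A := 2\|\partial_v f^\infty\|_{L^1(D^+_r)}/\lambda^2$, the hypothesis \eqref{stability_condition} reads $AT_r < 1$; iterating the delayed inequality on consecutive windows of length $T_r$, equivalently solving $\kappa = A(e^{\kappa T_r}-1)$, yields \eqref{exponential_decay_h} with a positive rate $\kappa$, and a final application of the elliptic bound gives \eqref{exponential_decay_u}.

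The main technical obstacle is the delayed Grönwall step itself, which combines three ingredients that must fit together tightly: the uniform exit time from the equilibrium phase portrait (particularly delicate in the critical case $r = 0$, where trajectories grazing $\partial D^+$ can be arbitrarily slow and one genuinely needs the explicit bound \eqref{dx_phi_0} on $\partial_x \phi^\infty(0)$); the measure-preserving change of variables reducing the double integral to $\|\partial_x U\|_{L^\infty}\|\partial_v f^\infty\|_{L^1(D^+_r)}$; and the sharp $L^1 \to L^\infty$ elliptic constant $2/\lambda^2$ that matches the constant appearing in the stability threshold \eqref{stability_condition}.
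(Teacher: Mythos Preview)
Your proposal is correct and follows essentially the same approach as the paper: Banach--Picard in $X_{T,\gamma}$ for well-posedness, the elliptic estimate $\|\partial_x U\|_{L^\infty}\le (2/\lambda^2)\|\rho\|_{L^1}$, invariance of $D^+_r$ and uniform exit times from the stationary phase portrait, and the delayed Gr\"onwall lemma with the characteristic equation $-\kappa=A(1-e^{\kappa T_r})$. The only cosmetic difference is that the paper obtains the elliptic $L^1\to L^\infty$ bound by testing against a regularized sign function rather than via the Green's function and maximum principle you invoke, and it proves the delayed Gr\"onwall lemma by a comparison principle rather than by window iteration; both routes yield the same constants and conclusions.
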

Several comments are in order about this result.

\begin{remark}
\begin{itemize}
\item The wellposedness of the linearized equations will follow from the classical Banach-Picard fixed point theorem. In the proof, a key ingredient is the use of an elliptic estimate for the linearized Poisson equation which enables to obtain a closed estimate on the growth of the $L^{1}$ norm of $h$.
\item  As for the stability  result, we heavily rely on the method of the characteristics and a detailed study of the associated flow. The proof of stability in $ Q \setminus D^{+} $ uses the fact that the equilibrium is supported on $D^{+}$. As a matter of fact, the source term vanishes in $Q \setminus D^{+} $ for the linearized Vlasov equation \eqref{linear-Vlasov-i} and thus the $L^{1}$ norm in $Q \setminus D^{+} $ is expected to decay thanks to the absorbing boundary conditions.
\item The proof of the exponential decay in $D^{+}_{r}$ for $r \geq 0$ heavily relies on the fact the stationary electric field $-\partial_{x} \phi^{\infty}$ is uniformly positive on $[0,1]$ and thus satisfies the two exit geometric conditions \ref{egc1} and \ref{egc2} in time $T_{r}$. A consequence is that the map $t \in \RR^{+} \longmapsto \| h(t) \|_{L^{1}(D^{+}_{r})}$ will verify a delayed Grönwall inequality of the form \eqref{fafa}.
Thus the inequality \eqref{stability_condition}  is a sufficient condition for this type of delayed Grönwall type inequality to possess solutions that are bounded by exponentially decaying functions. The inequality \eqref{stability_condition} also conveys the idea that the source term in the linearized Vlasov equation must produce particles at a rate which is less than the rate of absorption of the particles at the boundary. We mention that in the case when $r > 0$ the inequality \eqref{stability_condition} is equivalent to
\[
\frac{2\| \mu ' \|_{L^{1}(r,+\infty)} }{ \lambda^2 r} < 1
\]
which is trivially satisfied if $\mu' \in L^{1}(\RR^{+})$ is either small enough or $r$ is large enough. In the case $r = 0$, the time given in \eqref{T_r} is an upper bound for the time of exit of a particle which originally starts from the point of coordinate $(0,0)$. It is an estimate of the longest time of exit of a particle in the domain $D^{+}.$ As  $\lambda \longrightarrow 0^{+}$  this bound diverges exponentially fast because the electric vanishes exponentially fast. In addition, note that the bound \eqref{dx_phi_0} on $\partial_{x} \phi^{\infty}(0)$  has an implicit dependence on $\mu$. The inequality \eqref{stability_condition} in the case $r = 0$ is more intricate to verify.
\end{itemize}
\end{remark}

\subsection{The stationary characteristics}
For the analysis, it is convenient to consider an extension of $\phi^{\infty}$ to $\RR$ in such a way its extension still denoted $\phi^{\infty}$ verifies 
\begin{align}
\phi^{\infty} \in W^{2,\infty}(\RR), \quad  \| \partial_{x} \phi^{\infty}\|_{L^{\infty}(\RR)} \leq \| \partial_{x} \phi^{\infty} \|_{L^{\infty}(0,1)}. \label{extension_1}
\end{align}
As we work in dimension $d = 1$ and $\phi^{\infty} \in \mathscr{C}^{1}([0,1])$,  such an extension is easily constructed by a first order Taylor extrapolation of $\phi$ to $\RR \setminus [0,1]$. For $(t,x,v) \in \RR \times \RR^2$, we define the characteristics which passes through the point $(x,v)$ at time $t$ as the unique solution of the ordinary differential system 
\begin{align}
\begin{cases} \label{ode-char}
\frac{\dd}{\dd s} X_{\infty}\Big(s;t,x,v \Big) = V_{\infty}\Big( s;t,x,v \Big),\\
\frac{\dd}{\dd s} V_{\infty}\Big(s;t,x,v \Big) = - \partial_{x} \phi^{\infty}\Big( X_{\infty}\Big(s;t,x,v\Big)\Big),\\
X_{\infty}\Big(t;t,x,v\Big) = x , \: V_{\infty}\Big(t;t,x,v\Big) = v.
\end{cases}
\end{align}
Since $\phi^{\infty} \in W^{2,\infty}(\RR)$, thus $\partial_{x} \phi^{\infty}$ is Lipschitz continuous. Therefore the ordinary differential system \eqref{ode-char} has a unique solution with the regularity
\[
s \longmapsto\Big( X_{\infty}\Big(s;t,x,v\Big),V_{\infty}\Big(s;t,x,v\Big) \Big)  \in \mathscr{C}^{1}(\RR) \cap W_{\textnormal{loc}}^{2,\infty}(\RR).
\]
As for the spatial regularity,  for fixed $(s,t) \in \RR^2$, the flow map 
\begin{align}
\mathcal{F}^{\infty}_{s,t} : (x,v) \in \RR^2 \mapsto \Big( X_{\infty}\Big(s;t,x,v\Big),V_{\infty}\Big(s;t,x,v\Big) \Big) \label{diffeo}
\end{align}
is a measure preserving diffeomorphism. For $(t,x,v) \in \RR \times Q$, we may often be interested in the restriction of the the solution to the interval $\big( t_{\infty}^{\textnormal{inc}}(t,x,v), t_{\infty}^{\textnormal{out}}(t,x,v) \big)$ where
\begin{align}
t^{\textnormal{inc}}_{\infty}(t,x,v) = \inf \big \lbrace s \leq t \: : \: X_{\infty} \Big(s';t,x,v\Big) \in (0,1) \:  \forall s' \in (s,t) \big \rbrace, \label{inc_time}\\
t^{\textnormal{out}}_{\infty}(t,x,v) = \sup \big \lbrace s \geq t \: : \: X_{\infty} \Big(s';t,x,v\Big) \in (0,1) \:  \forall s' \in (t,s) \big \rbrace. \label{out_time}
\end{align}
It is the largest open interval on which the characteristic starting at time $t$ from a point   $(x,v) \in Q$ stays in $Q.$
\eqref{inc_time} (respectively  \eqref{out_time}) is called the incoming  (respectively the outgoing) time. The life time of the characteristic which started at time $t$ from the point $(x,v)$ is the difference $t^{\textnormal{out}}_{\infty}(t,x,v) - t^{\textnormal{inc}}_{\infty}(t,x,v)$. Note that the definitions \eqref{inc_time}-\eqref{out_time} do not depend on the way $\phi$ is extended on $\RR \setminus [0,1].$
The differential system \eqref{ode-char} being autonomous, one readily justifies by a uniqueness argument of the solution  that
\begin{align} \label{t_inc_t_0}
    t^{\textnormal{inc}}_{\infty}(t,x,v) = t + t^{\textnormal{inc}}_{\infty}(0,x,v), \quad t^{\textnormal{out}}_{\infty}(t,x,v) = t + t^{\textnormal{out}}_{\infty}(0,x,v).
\end{align}
Moreover, the characteristics verify the conservation of the energy
\begin{align}
    \forall s \in \RR,  \quad \frac{\dd }{\dd s} \Big( \frac{1}{2} V_{\infty}^{2}\Big( s;t,x,v \Big) + \phi^{\infty} \Big( X_{\infty}\Big(s;t,x,v \Big)\Big)  \Big ) = 0. \label{first_integral}
\end{align}
We shall study more in details the characteristics in Section \ref{sec:study_of_the_stationary_characteristics}.

To solve the Vlasov equation \eqref{linear-Vlasov-i}, we will use its mild-formulation which consists in integrating backward in time the equation along the characteristics. In doing so, there is two cases : either the characteristics has crossed $\Sigma^{\textnormal{inc}}$ at some positive time or it has crossed the time axis $\lbrace t = 0 \rbrace$. 

We may denote and define when $t_{\infty}^{\textnormal{inc}}(t,x,v) > - \infty,$
\begin{align}
&X_{\infty}^{\textnormal{inc}}(t,x,v) = X_{\infty}\Big(  t^{\textnormal{inc}}_{\infty}(t,x,v); t,x,v \Big) \in \lbrace 0,1 \rbrace, \\
&V_{\infty}^{\textnormal{inc}}(t,x,v) = V_{\infty}\Big(  t^{\textnormal{inc}}_{\infty}(t,x,v); t,x,v \Big).
\end{align}
We are now ready to define the concept of solutions we consider for \textbf{(LVP)}.
\begin{definition}[Global mild-strong solution] \label{def_mild_sol} Let $h_{0} \in L^{1}(Q).$ We say that a couple $(h,U)$ is a global mild-strong solution to \textbf{(LVP)} if there exists $\gamma > 0$ such that for every $T > 0$
\begin{itemize}
\item[a)] $(h,U) \in  X_{T, \gamma} \times Y_{T, \gamma}$ where $X_{T,\gamma}$ and $Y_{T,\gamma}$ are defined in \eqref{functional_spaces}.
\item[b)] $h$ is a mild solution of the Vlasov equation, in the sense that it satisfies for a.e $(t,x,v) \in [0,T] \times Q,$
\begin{align}
&h(t,x,v) =   \mathbf{1}_{t^{\textnormal{inc}}_{\infty}(t,x,v) \leq 0 } h_{0} \Big( X_{\infty}\Big(0;t,x,v \Big), V_{\infty}\Big(0;t,x,v\Big) \Big)  \label{def_f_linear} \\
    &+ \mathcal{G}(x,v)  \int_{0}^{t}  \mathbf{1}_{t_{\infty}^{\textnormal{inc}}(t,x,v) < s } \partial_{x} U \Big(s, X_{\infty}\Big(s;t,x,v \Big) \Big)  V_{\infty}\Big( s;t,x,v \Big) \dd s\nonumber  
\end{align}
where the function $\mathcal{G}$ is given by
\begin{align}
    \mathcal{G}(x,v) = \mathbf{1}_{D^{+}}(x,v)  \frac{\mu'\Big( \sqrt{v^2 + 2 \phi^{\infty}(x)} \Big)}{\sqrt{v^2 + 2 \phi^{\infty}(x)}},
\end{align}
and verifies for $(x,v) \in Q,$
\begin{align} \label{f_inf_and_G}
 \mathcal{G}(x,v) v = \partial_{v} f^{\infty}(x,v).
\end{align}

\item[c)]  For a.e $0 \leq t  \leq T$, $U(t)$ is a strong solution to
\begin{align}
-\lambda^2 \partial_{xx} U(t) + n_e'(\phi^{\infty}) U(t) = \int_{\RR} h(t,\cdot,v) \dd v \label{Poisson_f}
\end{align}
and satisfies the homogeneous Dirichlet boundary conditions $U(t)(0) = 0$ and $U(t)(1) = 0.$
\end{itemize}
\end{definition}
The fact that the function $\mathcal{G}$ is outside the integral term is due to the conservation of the energy \eqref{first_integral} and the fact that the domain $D^{+}$ is left invariant by the stationary flow as we prove in Lemma \ref{invariant_region}.
For the stability analysis, we need to introduce the concept of exit geometric conditions. The aim is to quantify how long a particle stays into a given subset of the phase space. We will used this concept to estimate the time spent by all the particles in the phase space before they reach the boundary. The two following definitions are not new since they are borrowed from \cite{HK-Glass-Moussa} . We thus define.

\begin{definition}[Internal Exit Geometric Condition] \label{egc1}Let $K \subset Q$ and $J$ a subinterval of $\RR_{+} $. We say that the equilibrium electric field $-\partial_{x} \phi^{\infty}$ verifies the first exit geometric condition in time $T$  with respect to $K$ on $J$ if
\begin{align}
\underset{ (t,x,v) \in J \times K } \sup( t_{\infty}^{\textnormal{out}}(t,x,v) - t)  \leq T, \\
\forall (t,x,v) \in J \times K, \quad (X^{\textnormal{out}}_{\infty},V^{\textnormal{out}}_{\infty} )(t,x,v) \in \Sigma^{\textnormal{out}} .\label{first_exit_geometric_condition}
\end{align}
\end{definition}

\begin{definition}[Initial Exit Geometric Condition] \label{egc2}Let $K \subset Q$. We say that the equilibrium electric field $- \partial_{x} \phi^{\infty}$ verifies the initial exit geometric condition in time $T$ with respect to $K$ if
\begin{align}
\underset{ (x,v) \in K} \sup t_{\infty}^{\textnormal{out}}(0,x,v) \leq T , \label{second_exit_geometric_condition}\\
\forall (x,v) \in K, \quad  (X^{\textnormal{out}}_{\infty}, V^{\textnormal{out}}_{\infty})(0,x,v) \in \Sigma^{\textnormal{out}} .
\end{align}
\end{definition}
\subsection{ The linear elliptic estimates} \label{sec:ll_est}
Let  $\rho \in L^{1}(0,1)$.  In this section we provide $W^{1,\infty}$ estimates for the solution of the linear elliptic problem 
\begin{equation}\label{problem_p}
\begin{cases}
-\lambda^2 \partial_{xx} V + n_e'(\phi^{\infty}) V  = \rho \text{ a.e in } (0,1),\\
V(0) = 0, \quad V(1) = 0.
\end{cases}
\end{equation}
where we recall that $\phi^{\infty} \in \mathscr{C}^{2}[0,1]$ and $n_{e}' $ is continuous and positive on $\RR$. Therefore $n_{e}' \circ \phi$ is a continuous function in the compact interval $[0,1]$ so it is in $L^{\infty}(0,1)$ and we have
\begin{align}
\forall x \in [0,1], \quad n_{e}'(\phi^{\infty}(x)) \geq \underset{u \in [0,1]}\min n_{e}'(\phi^{\infty}(u)) > 0.
\end{align}

We then deduce the following.
\begin{proposition}[$W^{1,\infty}$-estimates] \label{estimate_Linf_E}
Let  $\rho \in L^{1}(0,1)$. Then the problem \eqref{problem_p}  admits a unique strong solution $V \in W^{2,1}(0,1)$.  In addition, the solution satisfies the estimates
\begin{align}
	& \| \partial_{xx} V \|_{L^{1}(0,1)} \leq \frac{2 \| \rho \|_{L^{1}(0,1)}}{\lambda^2}, \label{dxxV}\\
	&\| \partial_{x} V \|_{L^{\infty}(0,1)} \leq \frac{2 \| \rho \|_{L^{1}(0,1)}}{\lambda^2},\label{dxV}\\
    	& \|(n_e' \circ \phi^{\infty}) V \|_{L^1(0,1)} \leq \| \rho \|_{L^1(0,1)}.\label{V}
\end{align}

\begin{proof} 
In one dimension, the existence and uniqueness in $W^{2,1}(0,1)$ of a strong solution to \eqref{problem_p} follows directly from the Lax-Milgram theorem and the elliptic regularity theory. We only establish the continuity in $H^{1}_{0}(0,1)$ of the linear form $\varphi \in H^{1}_{0}(0,1) \longmapsto \int_{0}^{1} \varphi(x) \rho(x) \dd x$. One has using a Hölder inequality that
\[
\forall \varphi \in H^{1}_{0}(0,1), \quad \left \vert  \int_{0}^{1} \varphi(x) \rho(x) \dd x \right \vert \leq \| \varphi \|_{L^{\infty}(0,1)} \| \rho \|_{L^{1}(0,1)} \leq \| \varphi \|_{H^{1}_{0}(0,1)}  \| \rho  \|_{L^{1}(0,1)},
\]
where we used the continuous imbedding of $H^{1}_{0}(0,1)$ in $L^{\infty}(0,1)$ to obtain the last inequality.
We now focus on the estimates.
\emph{ First estimate.} We obtain directly from the equation \eqref{problem_p} that
\[
\|\partial_{xx} V \|_{L^{1}(0,1)} \leq \frac{1}{\lambda^2} \| \rho - (n_{e}' \circ \phi^{\infty}) V\|_{L^1} \leq \frac{ \| \rho \|_{L^1} + \| (n_e' \circ \phi^{\infty} )V \|_{L^1}}{\lambda^2}.
\]
So it remains to estimate  the $L^1$-norm of $(n_e' \circ \phi^{\infty})V.$ Let us proceed by regularization of the sign function. Let $(\varphi_{n})_{n \in \NN} \subset \mathscr{C}^{1}(\RR)$ a sequence of functions such that for all $n \in \NN:$
\begin{align*}
    \varphi_n(0) = 0,\\
    \forall  u \in \RR, \varphi_{n}'(u) > 0, \:  |\varphi_n(u)| \leq 1\\
    \forall u \neq 0, \: \varphi_{n}(u) \longrightarrow \textnormal{sgn}(u) \textnormal{ as } n \rightarrow +\infty.
\end{align*}
For each $n \in \NN$, the function $\varphi_{n} \circ V$ belongs to $W^{1,1}(0,1)$ and it verifies $\varphi_{n} \circ V(0) = \varphi_{n} \circ V(1) = 0.$ We then multiply the equation \eqref{problem_p} by $\varphi_{n} \circ V$ and integrate by parts to obtain
\begin{align*}
    \int_{0}^{1} \lambda^2 |\partial_{x}V|^2 \varphi_{n}' \circ V \dd x + \int_{0}^{1} (n_e' \circ \phi^{\infty} ) V \varphi_n \circ V\dd x = \int_{0}^{1} \rho \varphi_{n} \circ V \dd x.
\end{align*}
Since $\varphi_{n}'$ is positive on $\RR$ the first term is non negative and therefore
\begin{align*}
    \int_{0}^{1} (n_e' \circ \phi^{\infty} )V \varphi_n \circ wV\dd x \leq \int_{0}^{1} \rho  \varphi_{n} \circ V  \dd x \leq  \| \rho \|_{L^1(0,1)}
\end{align*}
where the last inequality is obtained using both a Hölder inequality and the fact that $|\varphi_{n}| \leq 1$ on $\RR.$ Using the Lebesgue dominated convergence theorem, one has besides

\begin{align*}
    \int_{0}^{1} (n_e' \circ \phi^{\infty} )V \varphi_n \circ V \dd x \longrightarrow \int_{0}^{1} n_{e}' \circ \phi^{\infty} |V| \dd x .
\end{align*}
So, we get by passing to the limit in the previous inequality that
\[
\int_{0}^{1} n_{e}' \circ \phi^{\infty} |V|  \dd x \leq \| \rho \|_{L^1(0,1)}
\]
which yields \eqref{V} since $n_e'$ is positive and thus \eqref{dxxV}.

\emph{ Second estimate.} Since $V  \in  W^{2,1}(0,1)$ then  $V \in \mathscr{C}^{1}[0,1]$. Besides, \newline
$ \int_{0}^{1} \partial_{x} V(x)\dd x = 0$ because $V(0)= V(1) = 0$. By the mean value theorem there exists $x_0 \in (0,1)$ such that $\partial_{x} V(x_0) = 0.$ Then, for $x \in [0,1]$ one  has  $ \partial_{x}V(x) = \int_{x_0}^{x} \partial_{xx}V(t) \dd t$. Using a triangular inequality and the elliptic equation \eqref{problem_p}, we get for all $x \in [0,1],$
\begin{align*}
|\partial_{x}V(x)| \leq \|\partial_{xx} V \|_{L^{1}(0,1)} \leq \frac{2 \| \rho \|_{L^{1}(0,1)}}{\lambda^2}. 
\end{align*}

\end{proof}
\end{proposition}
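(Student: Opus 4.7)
The plan is to proceed in four steps, handling existence first and then the three estimates in a specific order that lets the harder estimate feed the easier ones.

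First, I would settle existence and uniqueness of $V \in W^{2,1}(0,1)$ by Lax–Milgram on $H^1_0(0,1)$. The bilinear form $a(u,w) = \int_0^1 \lambda^2 u' w' + (n_e'\circ\phi^\infty) u w\,\dd x$ is continuous and coercive since $n_e' > 0$ and $\phi^\infty \in \mathscr{C}^2[0,1]$, so $n_e'\circ\phi^\infty$ is bounded and positive. The linear form $w \mapsto \int_0^1 \rho w\,\dd x$ is continuous on $H^1_0(0,1)$ via the embedding $H^1_0(0,1) \hookrightarrow L^\infty(0,1)$, yielding $|\int \rho w| \le \|w\|_{H^1_0} \|\rho\|_{L^1}$. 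The resulting $H^1_0$ solution lies in $W^{2,1}$ by the equation itself: $\partial_{xx} V = \lambda^{-2}\bigl((n_e'\circ\phi^\infty)V - \rho\bigr) \in L^1(0,1)$.

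The main obstacle, and the only non-routine step, is estimate \eqref{V}. The natural test function $\mathrm{sgn}(V)$ is not in $H^1_0(0,1)$, so I would regularize: pick $\varphi_n \in \mathscr{C}^1(\RR)$ with $\varphi_n(0)=0$, $\varphi_n' > 0$, $|\varphi_n|\le 1$, and $\varphi_n(u) \to \mathrm{sgn}(u)$ pointwise for $u\ne 0$. Since $V \in W^{2,1}(0,1) \subset \mathscr{C}^1[0,1]$ vanishes at the endpoints, the composition $\varphi_n\circ V$ is an admissible test function in $H^1_0(0,1)$. Multiplying the equation by $\varphi_n\circ V$ and integrating by parts gives
\begin{equation*}
\int_0^1 \lambda^2 |\partial_x V|^2 (\varphi_n'\circ V)\,\dd x + \int_0^1 (n_e'\circ\phi^\infty) V (\varphi_n\circ V)\,\dd x = \int_0^1 \rho (\varphi_n\circ V)\,\dd x.
\end{equation*}
The first term is nonnegative by monotonicity of $\varphi_n$, and the right-hand side is bounded by $\|\rho\|_{L^1}$ since $|\varphi_n|\le 1$. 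Dominated convergence (with dominating function $(n_e'\circ\phi^\infty)|V| \in L^1$) passes the middle term to $\int_0^1 (n_e'\circ\phi^\infty)|V|\,\dd x$, which gives \eqref{V} since $n_e' > 0$.

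The remaining two estimates are immediate consequences. For \eqref{dxxV}, I take absolute values in the equation and apply the triangle inequality:
\begin{equation*}
\|\partial_{xx} V\|_{L^1} \le \frac{1}{\lambda^2}\bigl(\|\rho\|_{L^1} + \|(n_e'\circ\phi^\infty) V\|_{L^1}\bigr) \le \frac{2\|\rho\|_{L^1}}{\lambda^2}
\end{equation*}
using \eqref{V}. For \eqref{dxV}, since $V\in \mathscr{C}^1[0,1]$ with $V(0)=V(1)=0$, Rolle's theorem yields $x_0\in(0,1)$ with $\partial_x V(x_0) = 0$. Then $\partial_x V(x) = \int_{x_0}^x \partial_{xx} V(t)\,\dd t$, so $\|\partial_x V\|_{L^\infty} \le \|\partial_{xx} V\|_{L^1} \le 2\lambda^{-2}\|\rho\|_{L^1}$, completing the proof.
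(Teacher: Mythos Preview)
Your proof is correct and follows essentially the same approach as the paper: Lax--Milgram with the $H^1_0\hookrightarrow L^\infty$ embedding for existence, regularization of the sign function to get \eqref{V}, then \eqref{dxxV} directly from the equation, and finally \eqref{dxV} via a vanishing point of $\partial_x V$. The only cosmetic differences are that the paper sets up the bound for \eqref{dxxV} before proving \eqref{V}, and invokes the mean value theorem where you invoke Rolle's theorem.
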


\subsection{Study of the stationnary characteristics}\label{sec:study_of_the_stationary_characteristics}
Consider the following partition of the phase-space
\begin{align}
    Q = D^{+} \cup  D^{\pm} \cup D^{-} \cup S,
\end{align}
where $D^{+}$ is given by \eqref{f_inf},
\begin{align}
    S = \Big \lbrace (x,v) \in Q \: : v^2 + 2\phi^{\infty}(x) = 0 \Big \rbrace, \label{S}\\
    D^{\pm} = \Big \lbrace (x,v) \in Q \: :  |v| <\sqrt{-2\phi^{\infty}(x)} \Big \rbrace, \label{D_pm}\\
    D^{-} = \Big \lbrace (x,v) \in Q \: : v < -\sqrt{-2\phi^{\infty}(x)} \Big \rbrace \label{D_m}.
\end{align}
We recall here that $\phi^{\infty}$ is decreasing on $[0,1]$ with $\phi^{\infty}(0) = 0$, hence and the above sets are well defined. These sets corresponds to sub or super level sets of the microscopic energy $(x,v) \mapsto \frac{v^2}{2} + \phi^{\infty}(x)$ which is a conserved quantity along the flow \eqref{first_integral}. In this section we are interested in studying in details the characteristics, notably the invariance of the above sets and providing quasi-explicit formulas for the times of exit of the characteristics. A sketch of the phase portrait is given in Figure \eqref{phase-portrait-fig}.
\begin{figure}[!ht]
\center
\includegraphics[scale=8]{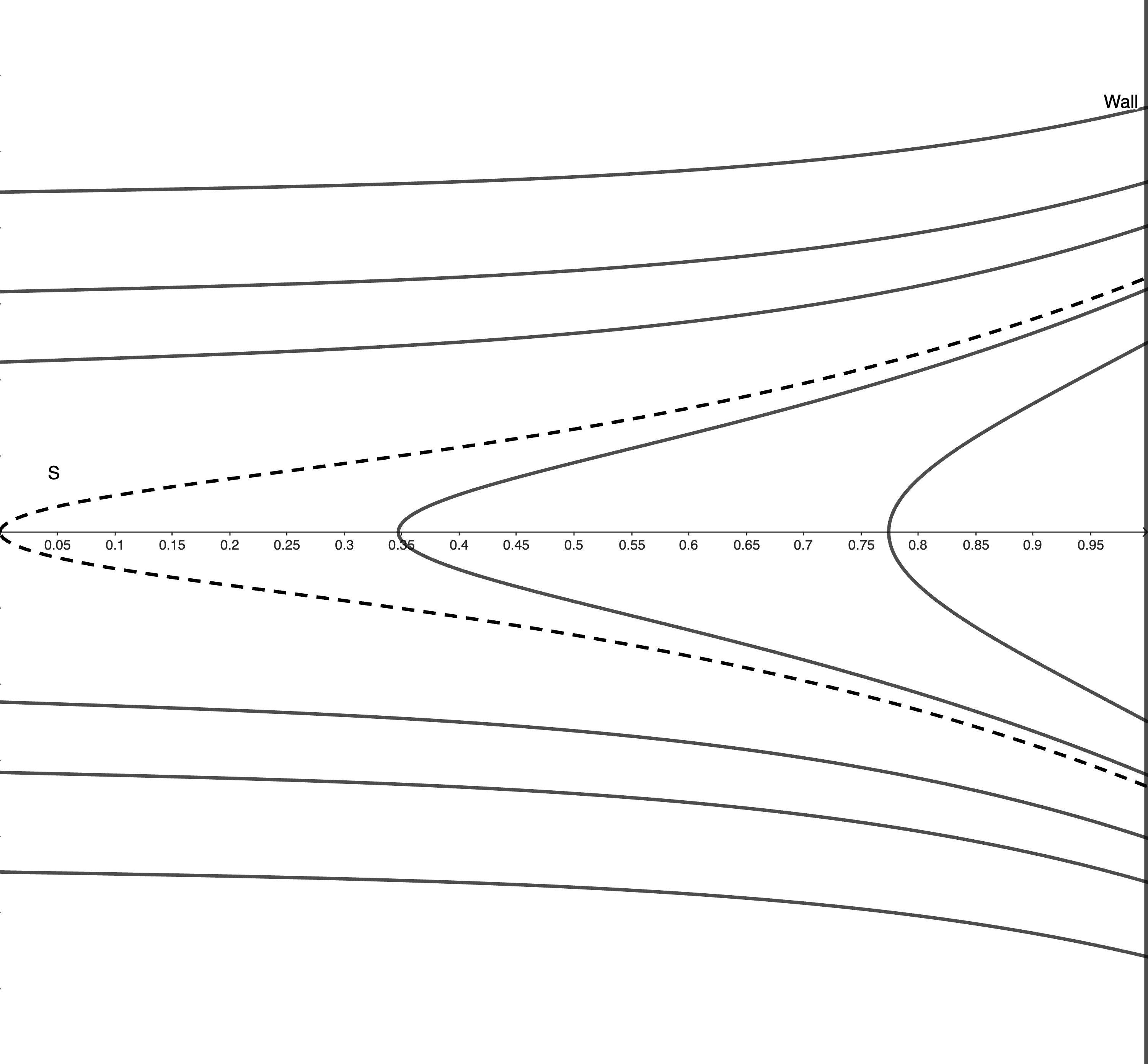}
\caption{Sketch of the stationary phase portrait. The dashed line is the curve of equation $\frac{v^2}{2} + \phi(x) = 0.$}
\label{phase-portrait-fig}
\end{figure}
As a consequence of this analysis we shall obtain.
\begin{proposition}[Exit Geometric Conditions]\label{egc_linear} Let $r \geq 0$. The equilibrium electric field  given by Theorem \ref{theorem_equilibrium} verifies the internal exit geometric condition in time $T_{r}$ with respect to $D^{+}_{r}$ on $\RR^{+}$ and the initial exit geometric condition in time $T_{r}$ with respect to $D^{+}_{r}.$
\end{proposition}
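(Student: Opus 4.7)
My plan is to reduce the statement to a direct analysis of the trajectories $s \mapsto (X_{\infty}(s;0,x_0,v_0), V_{\infty}(s;0,x_0,v_0))$ for $(x_0,v_0) \in D^+_r$. Because the system \eqref{ode-char} is autonomous, the translation identities \eqref{t_inc_t_0} show that the internal EGC for any initial time $t \geq 0$ reduces to the initial EGC (the case $t=0$), so both assertions can be proved at once. The starting observation is energy conservation \eqref{first_integral} combined with $\phi^{\infty} \leq 0$ on $[0,1]$, which yield
\[
V_{\infty}(s)^2 \;=\; v_0^2 + 2\phi^{\infty}(x_0) - 2\phi^{\infty}(X_{\infty}(s)) \;\geq\; v_0^2 + 2\phi^{\infty}(x_0) \;>\; r^2
\]
as long as the characteristic remains in $Q$. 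Since moreover $-\partial_{x}\phi^{\infty} \geq |\partial_{x}\phi^{\infty}(0)|>0$ on $[0,1]$ (by the concavity of $\phi^{\infty}$ together with \eqref{dx_phi_0}), the velocity $V_{\infty}$ is strictly positive and strictly increasing; hence $X_{\infty}$ is strictly increasing, the trajectory cannot exit through $x=0$, and it must reach $x=1$ in finite time at a point of $\Sigma^{\textnormal{out}}$. This simultaneously gives the exit-point part of both EGCs, the forward-invariance of $D^+_r$, and reduces the task to an upper bound on $\tau := t_{\infty}^{\textnormal{out}}(0,x_0,v_0)$ uniform in $(x_0,v_0) \in D^+_r$.

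When $r > 0$, the lower bound $V_{\infty} \geq r$ integrates directly: $1 - x_0 = \int_{0}^{\tau} V_{\infty}(s)\,\dd s \geq r\tau$, so $\tau \leq 1/r = T_r$, uniformly in $(x_0,v_0)$.

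The case $r = 0$ is the only genuinely delicate step, because the bound $V_{\infty} > 0$ is not quantitatively uniform. My plan is to combine the acceleration estimate $\frac{\dd}{\dd s} V_{\infty} \geq |\partial_{x}\phi^{\infty}(0)|$ with energy conservation evaluated at the exit time: the former gives $V_{\infty}(\tau) \geq v_0 + |\partial_{x}\phi^{\infty}(0)|\,\tau$, while the latter yields $V_{\infty}(\tau)^2 = v_0^2 + 2\phi^{\infty}(x_0) - 2\phi_b \leq v_0^2 - 2\phi_b$. Eliminating $V_{\infty}(\tau)$ between the two bounds gives
\[
\tau \;\leq\; \frac{\sqrt{v_0^2 - 2\phi_b} \,-\, v_0}{|\partial_{x}\phi^{\infty}(0)|},
\]
and since the right-hand side is a decreasing function of $v_0 \geq 0$, its supremum over $D^+$ is attained in the limit $v_0 \to 0^+$ and equals $\sqrt{-2\phi_b}/|\partial_{x}\phi^{\infty}(0)| = T_0$. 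The main substantive input, and the only slightly subtle point, is exactly this optimization in the $r=0$ case together with the uniform force bound $-\partial_{x}\phi^{\infty} \geq |\partial_{x}\phi^{\infty}(0)|$, which rests on the concavity of $\phi^{\infty}$ established in Theorem \eqref{theorem_equilibrium}; this is what produces the sharp constant $T_0$.
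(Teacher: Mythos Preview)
Your proof is correct and reaches the same conclusion with the same sharp constant $T_r$, but the route differs from the paper's. The paper first establishes explicit integral formulas for the outgoing time (Lemma~\ref{explicit_times}), namely $t_{\infty}^{\textnormal{out}}(0,x,v)=\int_x^1 \frac{\dd u}{\sqrt{v^2+2(\phi^{\infty}(x)-\phi^{\infty}(u))}}$ in $D^+$, and then bounds this integral in Corollary~\ref{estimate_incoming_time}: for $r>0$ by dropping $-2\phi^{\infty}(u)\geq 0$ in the denominator, and for $r=0$ by dropping $v^2+2\phi^{\infty}(x)>0$ and performing the change of variable $u\mapsto -\phi^{\infty}(u)$, using $|\partial_x\phi^{\infty}(u)|\geq |\partial_x\phi^{\infty}(0)|$ from concavity. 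You instead work directly at the ODE level: for $r>0$ you integrate the velocity lower bound $V_{\infty}>r$, and for $r=0$ you combine the acceleration bound $\dot V_{\infty}\geq |\partial_x\phi^{\infty}(0)|$ with the energy identity at the exit point, then optimize over $v_0$. Your argument is more elementary in that it bypasses the explicit time formulas and the invariance lemma entirely; the paper's formulas, on the other hand, also yield the bounds on $t^{\textnormal{inc}}$ and the description of the exit times in $D^{\pm}$ and $D^-$ that are used elsewhere (e.g.\ in the decay statement on $Q\setminus D^+$). Both approaches hinge on the same structural fact, namely the uniform positivity $-\partial_x\phi^{\infty}\geq |\partial_x\phi^{\infty}(0)|>0$ coming from the concavity and strict monotonicity of $\phi^{\infty}$.
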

We begin with following first result which in particular states that the stationary characteristics are leaving the phase space $Q$ in finite time.
\begin{lemma}[Finiteness of the exit times]\label{finite_inc_out_time} Let $(t,x,v) \in \RR \times Q$. We have the following
\begin{itemize}
\item[a)] $t_{\infty}^{\textnormal{inc}}(t,x,v) > -\infty$ and $t_{\infty}^{\textnormal{out}}(t,x,v) < +\infty.$
\item[b)] $\textnormal{sgn}\Big(  X_{\infty}^{\textnormal{inc}}(t,x,v) - \frac{1}{2} \Big) V_{\infty}^{\textnormal{inc}}(t,x,v) \leq 0.$
\end{itemize}
\begin{proof} \emph{Proof of a).}
We only do the proof for $t_{\infty}^{\textnormal{inc}}(t,x,v)$ since the proof for $t_{\infty}^{\textnormal{out}}(t,x,v) $ is similar. Observe first that by construction, the stationary solution verifies  $\partial_{x} \phi^{\infty} < 0 $ in $[0,1]$ and $- \partial_{x} \phi^{\infty}$ is  non decreasing on $[0,1]$. We now argue by contradiction and assume that $t_{\infty}^{\textnormal{inc}}(t,x,v) = -\infty$. Then one has for all $s \in (-\infty, t]$,  $0 < X_{\infty}\big(s;t,x,v\big) < 1.$ Using the monotony of $\partial_{x} \phi^{\infty}$ we deduce from \eqref{ode-char} that for all $s \in (-\infty,t]$, $-\partial_{x} \phi^{\infty}(0) \leq  \frac{\dd}{\dd s} V_{\infty}\big(s;t,x,v \big) \leq -\partial_{x} \phi^{\infty}(1).$ By integration of the previous inequality for $s \in [u,t]$ with $u < t$ we get
\[
\partial_{x} \phi^{\infty}(1) (t-u) + v \leq V(u;t,x,v) \leq \partial_{x}\phi^{\infty}(0)(t-u) + v.
\]
Integrating again the previous inequality on $[s,t]$ for $s < t$ , we get
\[
-\partial_{x} \phi^{\infty}(0) \frac{ (t-s)^2}{2} - v(t-s) + x \leq X_{\infty}\big(s;t,x,v \big) \leq -\partial_{x} \phi^{\infty}(1) \frac{ (t-s)^2}{2} - v(t-s) + x.
\]
Note that $-\partial_{x} \phi^{\infty} > 0$ on $[0,1]$ therefore  $\lim_{s \rightarrow -\infty}  \limits-\partial_{x} \phi^{\infty}(0) \frac{ (t-s)^2}{2} - v(t-s) + x = +\infty$. We then deduce by comparison that for $|s|$ large enough we have $X_{\infty}(s;t,x,v) \geq 1$ which yields the contradiction.

\emph{Proof of b).}  By virtue of the previous point, the incoming position and velocity are well-defined. Then b) amounts to prove that if $X^{\textnormal{inc}}_{\infty}(t,x,v) = 0$ then $V_{\infty}^{\textnormal{inc}}(t,x,v) \geq 0$ and if $X^{\textnormal{inc}}_{\infty}(t,x,v) = 1$ then $V_{\infty}^{\textnormal{inc}}(t,x,v) \leq 0.$ We only treat the case when $X^{\textnormal{inc}}_{\infty}(t,x,v) = 0$ since the reasoning is similar for the other case. So suppose that $X^{\textnormal{inc}}_{\infty}(t,x,v) = 0.$ We want to prove that $V_{\infty}^{\textnormal{inc}}(t,x,v) \geq 0$. So assume for the sake of the contradiction that $V_{\infty}^{\textnormal{inc}}(t,x,v) < 0$. Then, since the solution to \eqref{ode-char} is continuous in time, for $0 < \eta < 1$ there exists $\delta > 0$ such that for all $s \in (t^{\textnormal{inc}}_{\infty}(t,x,v) -  \delta,  t^{\textnormal{inc}}_{\infty}(t,x,v) + \delta ) $
\[
V_{\infty}\big(s;t,x,v \big) < \frac{V_{\infty}^{\textnormal{inc}}\big(t,x,v\big)}{2} < 0, \quad  -\eta < X_{\infty}(s;t,x,v) < \eta.
\]
Since $\frac{\dd X_{\infty}}{\dd s}(s;t,x,v) = V_{\infty}(s;t,x,v) $, $X_{\infty}\big(\cdot;t,x,v\big)$ is decreasing on the interval\newline$(t^{\textnormal{inc}}_{\infty}(t,x,v) -  \delta, t^{\textnormal{inc}}_{\infty}(t,x,v) ].$ Therefore,
\[
t^{\textnormal{inc}}_{\infty}(t,x,v) -  \delta< s < t^{\textnormal{inc}}_{\infty}(t,x,v)  \Longrightarrow 0 < X_{\infty}\big(s;t,x,v \big) < \eta.
\]
It eventually contradicts the minimality  of the incoming time \eqref{inc_time}.
\end{proof}
\end{lemma}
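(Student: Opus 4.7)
The plan is to exploit the fact that $\phi^{\infty}$ is strictly decreasing and concave on $[0,1]$ by Theorem \ref{theorem_equilibrium}, so that $-\partial_{x} \phi^{\infty}$ is continuous and non-decreasing on $[0,1]$ with $-\partial_{x} \phi^{\infty}(0) > 0$ by \eqref{dx_phi_0}. Setting
\[
c := \min_{x \in [0,1]} \big(-\partial_{x} \phi^{\infty}(x)\big) = -\partial_{x} \phi^{\infty}(0) > 0,
\]
one has $\frac{\dd}{\dd s} V_{\infty}(s;t,x,v) \geq c$ as long as $X_{\infty}(s;t,x,v) \in [0,1]$, by \eqref{ode-char}. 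This uniform positive acceleration is the engine of the whole argument.

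For part (a), I would reason by contradiction, starting with the forward direction. If $t^{\textnormal{out}}_{\infty}(t,x,v) = +\infty$, then $X_{\infty}(s;t,x,v) \in (0,1)$ for all $s \geq t$, so the bound $\dot V_{\infty} \geq c$ holds on $[t,+\infty)$. A first integration gives $V_{\infty}(s;t,x,v) \geq v + c(s-t)$, and a second one yields $X_{\infty}(s;t,x,v) \geq x + v(s-t) + \tfrac{c}{2}(s-t)^{2}$, which tends to $+\infty$ and contradicts $X_{\infty} < 1$. The finiteness of $t^{\textnormal{inc}}_{\infty}$ follows by the symmetric argument already sketched in the excerpt: assuming $t^{\textnormal{inc}}_{\infty} = -\infty$ one has $\dot V_{\infty} \geq c$ on $(-\infty, t]$, hence by integrating from $u$ to $t$, $V_{\infty}(u) \leq v - c(t-u)$ for $u \leq t$; integrating once more gives $X_{\infty}(s) \geq x - v(t-s) + \tfrac{c}{2}(t-s)^{2} \to +\infty$ as $s \to -\infty$, again impossible.

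For part (b), by virtue of (a) the incoming position and velocity are well-defined, and $X^{\textnormal{inc}}_{\infty}(t,x,v) \in \{0,1\}$ by continuity together with the infimum definition \eqref{inc_time}. I would argue by contradiction. Suppose first that $X^{\textnormal{inc}}_{\infty}(t,x,v) = 0$ and $V^{\textnormal{inc}}_{\infty}(t,x,v) < 0$. By continuity of $s \mapsto (X_{\infty}, V_{\infty})(s;t,x,v)$, there exists $\delta > 0$ such that for every $s \in (t^{\textnormal{inc}}_{\infty} - \delta, t^{\textnormal{inc}}_{\infty} + \delta)$ one has $V_{\infty}(s;t,x,v) < V^{\textnormal{inc}}_{\infty}(t,x,v)/2 < 0$ and $|X_{\infty}(s;t,x,v)| < 1/2$. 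Since $\frac{\dd X_{\infty}}{\dd s} = V_{\infty} < 0$ on this interval, $X_{\infty}(\cdot;t,x,v)$ is strictly decreasing there, and since it vanishes at $t^{\textnormal{inc}}_{\infty}$, one gets $X_{\infty}(s;t,x,v) \in (0,1/2) \subset (0,1)$ for every $s \in (t^{\textnormal{inc}}_{\infty} - \delta, t^{\textnormal{inc}}_{\infty})$, contradicting the minimality of $t^{\textnormal{inc}}_{\infty}$ in \eqref{inc_time}. The case $X^{\textnormal{inc}}_{\infty} = 1$ with $V^{\textnormal{inc}}_{\infty} > 0$ is handled identically, $\frac{\dd X_{\infty}}{\dd s} > 0$ forcing $X_{\infty}$ to sit in $(1/2,1)$ just before $t^{\textnormal{inc}}_{\infty}$.

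The only delicate point is securing the uniform lower bound $c > 0$ for $-\partial_{x} \phi^{\infty}$ on the closed interval $[0,1]$. This is exactly where the concavity of $\phi^{\infty}$ in Theorem \ref{theorem_equilibrium}, combined with $-\partial_{x} \phi^{\infty}(0) > 0$ from \eqref{dx_phi_0}, is indispensable: concavity forces $-\partial_{x} \phi^{\infty}$ to be non-decreasing on $[0,1]$, ruling out any degeneracy inside the interval, and thus allowing the quadratic-in-time growth of $X_{\infty}$ that drives the contradictions in part (a).
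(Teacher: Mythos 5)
Your proposal is correct and follows essentially the same route as the paper: the contradiction via the uniform positive lower bound on the acceleration $-\partial_{x}\phi^{\infty}\geq -\partial_x\phi^{\infty}(0)>0$ (guaranteed by concavity and monotonicity of $\phi^{\infty}$), double integration yielding quadratic growth of $X_{\infty}$ for part (a), and the local monotonicity-of-$X_{\infty}$ argument contradicting the minimality of the incoming time for part (b). The only cosmetic difference is that you use the single constant $c=-\partial_x\phi^{\infty}(0)$ where the paper sandwiches $\dot V_{\infty}$ between $-\partial_x\phi^{\infty}(0)$ and $-\partial_x\phi^{\infty}(1)$; the relevant bound is the same.
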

We now study the regularity of the incoming time and the associated exit point.
\begin{lemma}[Regularity and characterization of the singular set] \label{regularity_lemma}Let $t \in \RR.$  
\begin{itemize}
\item [a)] The three functions $(x,v) \in Q \mapsto t^{\textnormal{inc}}_{\infty}(t,x,v)$, $(x,v) \in Q \mapsto X^{\textnormal{inc}}_{\infty}(t,x,v)$, $(x,v) \in Q \mapsto V^{\textnormal{inc}}_{\infty}(t,x,v)$ are continuous at every point $(x,v) \in Q $ such that $(X_{\infty}^{\textnormal{inc}}(t,x,v),V_{\infty}^{\textnormal{inc}}(t,x,v)) \notin \Sigma^{0}.$
\item[b)] One has for all $(x,v) \in Q,$
\[
 \Big( (X_{\infty}^{\textnormal{inc}}(t,x,v),V_{\infty}^{\textnormal{inc}}(t,x,v)) \in \Sigma^{0} \Longleftrightarrow X_{\infty}^{\textnormal{inc}}(t,x,v)= 0 \textnormal{ and } (x,v) \in S \Big).
\]
\end{itemize}
\begin{proof} \emph{Proof of a)}. A proof of continuity can be found in \cite{HK-Glass-Moussa} Lemma 3.3.  We nevertheless propose one here for the sake selfcontainedness of this document with slightly more details.  We fix $t \in \RR$. Observe firstly that we can decompose the incoming position and velocitiy as functions in the form
\begin{align}
X_{\infty}^{\textnormal{inc}}(t,\cdot,\cdot) = \big( X_{\infty,t} \circ S_{t} \big)(\cdot,\cdot)  \quad V_{\infty}^{\textnormal{inc}}(t,\cdot,\cdot) = \big( V_{\infty,t} \circ S_{t} \big) (\cdot,\cdot) \label{decomp_id}
\end{align}
with
\begin{align*}
&X_{\infty,t}  : (s,x,v) \in  \RR^{3} \longmapsto X_{\infty}(s;t,x,v),  \quad V_{\infty,t}  : (s,x,v)  \in \RR^{3} \longmapsto V_{\infty}(s;t,x,v),\\
&S_{t} : (x,v) \in \RR^{2} \longmapsto (t_{\infty}^{\textnormal{inc}}(t,x,v),x,v).
\end{align*}
Let us recall briefly why $  (X_{\infty,t},V_{\infty,t})$ is  continuous.
For $(x,v), (x',v') \in \RR^{2},$ let us set $Y_{\infty}(\cdot;t) = X_{\infty}(\cdot;t,x,v) - X_{\infty}(\cdot; t , x',v')$, $W_{\infty}(\cdot;t) = V_{\infty}(\cdot;t,x,v) - V_{\infty}(\cdot;t, x',v')$. Then integrating the differential system \eqref{ode-char} and using a triangle inequality, one gets for $s \leq t$
\[
\vert Y_{\infty}(s,t) \vert + \vert W_{\infty}(s,t) \vert \leq \vert x-x' \vert + \vert v-v' \vert + C \int_{s}^{t} \vert Y_{\infty}(\tau,t) \vert  + \vert W_{\infty}(\tau,t) \vert d\tau
\]
with $C = \max( 1 , \| \partial_{xx} \phi^{\infty} \|_{L^{\infty}(\RR)})$. A similar inequality holds if $s > t$ so that  a Gronwall Lemma then yields for $s\in \RR$
\begin{align}
\vert Y_{\infty}(s,t) \vert + \vert W_{\infty}(s,t) \vert \leq \Big( \vert x - x' \vert + \vert v - v' \vert \Big) e^{C|t-s|}. \label{first_Gronwall_estimate}
\end{align}
This estimate associated with the Cauchy-Lipschitz theorem shows that for all $t \in \RR$, the map $(X_{\infty,t},V_{\infty,t})$ is continuous. Then, thanks to the decomposition \eqref{decomp_id}, to establish the expected continuity,  it is now sufficent to prove the continuity  of the function $(x,v) \in Q \longmapsto t_{\infty}^{\textnormal{inc}}(t,x,v)$ at every point $(x,v) \in Q$ such that \newline$(X_{\infty}^{\textnormal{inc}}(t,x,v),V_{\infty}^{\textnormal{inc}}(t,x,v)) \notin \Sigma^{0}.$

We fix $(x,v) \in Q$ such that $(X_{\infty}^{\textnormal{inc}}(t,x,v),V_{\infty}^{\textnormal{inc}}(t,x,v)) \notin \Sigma^{0}$  and suppose without loss of generality that $V_{\infty}^{\textnormal{inc}}(t,x,v) > 0$, and as a consequence of Lemma \ref{finite_inc_out_time} b), that $X_{\infty}^{\textnormal{inc}}(t,x,v) = 0$. Then we have $t_{\infty}^{\textnormal{inc}}(t,x,v) > - \infty $ and by definition of the incoming time we also have $X_{\infty}(s; t,x,v) \in (0,1)$ for all $t_{\infty}^{\textnormal{inc}}(t,x,v) < s < t $.
The proof is in three steps.
\newline
\emph{ Step 1: Localization in time.}
We are going to prove the following claim: \emph{ there exists $\varepsilon _{0}> 0$ such that for any $0 < \varepsilon < \frac{\varepsilon_{0}}{2}$, we have $X_{\infty}(s;t,x,v) < 0$ for $s \in (t_{\infty}^{\textnormal{inc}}(t,x,v) - \varepsilon_{0}, t_{\infty}^{\textnormal{inc}}(t,x,v) - \varepsilon)$.} Indeed, since $X_{\infty}(\cdot;t,x,v)$ is $\mathscr{C}^{1}$ a Taylor expansion at the point $t_{\infty}^{\textnormal{inc}}(t,x,v)$ yields for $s < t_{\infty}^{\textnormal{inc}}(t,x,v),$
$
X_{\infty}(s;t,x,v)  = (s-t_{\infty}^{\textnormal{inc}}(t,x,v)) \left( V_{\infty}^{\textnormal{inc}}(t,x,v) + \rm{o}(1)(s) \right)
$
where $\rm{o}(1) : \RR \rightarrow \RR$ is function such that \newline$\rm{o}(1)(s) \longrightarrow 0$ as $s \longrightarrow t_{\infty}^{\textnormal{inc}}(t,x,v).$
Since $ V_{\infty}^{\textnormal{inc}}(t,x,v)  > 0$, it therefore exists $\varepsilon_{0} > 0$ such that  we have $V_{\infty}^{\textnormal{inc}}(t,x,v) + \rm{o}(1) > 0$ for $ t_{\infty}^{\textnormal{inc}}(t,x,v) - \varepsilon_{0} <s < t_{\infty}^{\textnormal{inc}}(t,x,v)$ and it yields the claim.
\newline

\emph{ Step 2: Localization in space.} Let $0 < \varepsilon < \min \lbrace  \frac{\varepsilon_{0}}{2} ; t-t_{\infty}^{\textnormal{inc}}(t,x,v)  \rbrace$. Using the Gronwall estimate \eqref{first_Gronwall_estimate} at $s = t_{\infty}^{\textnormal{inc}}(t,x,v) -2 \varepsilon$ and the fact that by virtue of the previous claim $X_{\infty}\big( t_{\infty}^{\textnormal{inc}}(t,x,v) -2 \varepsilon;t,x,v) < 0$ we have that there exists $\delta \equiv \delta_{t,x,v,\varepsilon}  > 0$ such that for all $(x',v') \in Q,$
\[
| x-x'| + |v - v'| < \delta \Longrightarrow X_{\infty}\big( t_{\infty}^{\textnormal{inc}}(t,x,v) -2 \varepsilon;t,x',v') < 0.
\]
It implies that for $(x',v')$ in a ball centered at $(x,v)$ and of radius $\delta$, we have  $ t_{\infty}^{\textnormal{inc}}(t,x',v') > -\infty$ and that the associated incoming time is such that $t_{\infty}^{\textnormal{inc}}(t,x,v) -2 \varepsilon < t_{\infty}^{\textnormal{inc}}(t,x',v').$  We are now going to prove that $t_{\infty}^{\textnormal{inc}}(t,x',v') < t_{\infty}^{\textnormal{inc}}(t,x,v) +2 \varepsilon.$ Observe that by definition of $t_{\infty}^{\textnormal{inc}}(t,x,v)$ and since $\varepsilon$ is small enough we have $X_{\infty}\big(  t_{\infty}^{\textnormal{inc}}(t,x,v) +2 \varepsilon;t,x,v) \in (0,1).$ Now using again the Gronwall estimate at \eqref{first_Gronwall_estimate} $s =  t_{\infty}^{\textnormal{inc}}(t,x,v) +2 \varepsilon$,  there exists $\eta \equiv \eta_{t,x,v,\varepsilon}  > 0$ such  that for all $(x',v') \in Q,$
\[
| x-x'| + |v - v'| < \eta \Longrightarrow X_{\infty}\big( t_{\infty}^{\textnormal{inc}}(t,x,v) +2 \varepsilon;t,x',v') \in (0,1).
\]
It implies that for $(x',v')$ in a ball centered at $(x,v)$ of radius $\eta$ we have  $t_{\infty}^{\textnormal{inc}}(t,x',v') < t_{\infty}^{\textnormal{inc}}(t,x,v) +2 \varepsilon.$

\emph{ Conclusion.} We have proven that for $0 < \varepsilon < \min \lbrace  \frac{\varepsilon_{0}}{2} ; t-t_{\infty}^{\textnormal{inc}}(t,x,v)  \rbrace$ there exists $r = \min\lbrace \delta_{t,x,v,\varepsilon}, \eta_{t,x,v,\varepsilon}  \rbrace > 0$ such that for all $(x',v') \in Q$
\[
| x - x'| + |v - v'| < r \Longrightarrow | t_{\infty}^{\textnormal{inc}}(t,x',v') - t_{\infty}^{\textnormal{inc}}(t,x,v) | < 2 \varepsilon. 
\]
It shows that the map $(x,v) \in Q \mapsto t_{\infty}^{\textnormal{inc}}(t,x,v)$ is continuous at any point $(x,v) \in Q$ such that $(X_{\infty}^{\textnormal{inc}}(t,x,v),V_{\infty}^{\textnormal{inc}}(t,x,v)) \notin \Sigma^{0}.$ 

\emph{ Proof of b).} We show the necessary condition. Fix $t \in \RR$ and $(x,v) \in Q$ such that $(X_{\infty}^{\textnormal{inc}}(t,x,v),V_{\infty}^{\textnormal{inc}}(t,x,v)) \in \Sigma^{0}$. Then by conservation of the energy \eqref{first_integral} we have
\[
\phi^{\infty}\Big( X_{\infty}^{\textnormal{inc}}(t,x,v)  \Big) + \frac{V_{\infty}^{\textnormal{inc}}(t,x,v)^2}{2} = \phi^{\infty}(x) + \frac{v^2}{2}.
\]
Note that the case $( X_{\infty}^{\textnormal{inc}}(t,x,v),  V_{\infty}^{\textnormal{inc}}(t,x,v)) = (0,0)$ readily yields $(x,v) \in S,$ while the other case $( X_{\infty}^{\textnormal{inc}}(t,x,v),  V_{\infty}^{\textnormal{inc}}(t,x,v)) = (1,0)$  yields $\phi^{\infty}(1) - \phi^{\infty}(x) = \frac{v^2}{2}.$ Since $\phi^{\infty}$ is decreasing  on $[0,1]$ one has $\frac{v^2}{2} = \phi^{\infty}(1) - \phi^{\infty}(x) < 0$. It yields a contradiction. The sufficient condition is trivial by conservation of energy.
\end{proof}
\end{lemma}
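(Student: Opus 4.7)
To prove (a), the strategy is to observe that the incoming position and velocity factor as compositions
\[
X^{\textnormal{inc}}_{\infty}(t,x,v) = X_{\infty}\bigl(t^{\textnormal{inc}}_{\infty}(t,x,v);t,x,v\bigr), \quad V^{\textnormal{inc}}_{\infty}(t,x,v) = V_{\infty}\bigl(t^{\textnormal{inc}}_{\infty}(t,x,v);t,x,v\bigr),
\]
and that the flow map $(s,x,v) \mapsto (X_{\infty}(s;t,x,v),V_{\infty}(s;t,x,v))$ is jointly continuous on $\RR^{3}$. This last fact follows from a standard Gronwall argument applied to the integral form of \eqref{ode-char}, exploiting that $\phi^{\infty} \in W^{2,\infty}(\RR)$ so that $\partial_{x}\phi^{\infty}$ is Lipschitz. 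Thus the whole task reduces to proving that $(x,v) \mapsto t^{\textnormal{inc}}_{\infty}(t,x,v)$ is continuous at every $(x,v) \in Q$ with $(X^{\textnormal{inc}}_{\infty},V^{\textnormal{inc}}_{\infty})(t,x,v) \notin \Sigma^{0}$.

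For this continuity, fix such a point $(x,v)$ and set $\tau := t^{\textnormal{inc}}_{\infty}(t,x,v)$. By Lemma \eqref{finite_inc_out_time}(b), the exclusion of $\Sigma^{0}$ forces $V^{\textnormal{inc}}_{\infty}(t,x,v) \neq 0$, i.e.\ transversal exit. Treating for definiteness the case $X^{\textnormal{inc}}_{\infty}(t,x,v) = 0$ with $V^{\textnormal{inc}}_{\infty}(t,x,v) > 0$ (the three symmetric cases are identical), a first-order Taylor expansion of $s \mapsto X_{\infty}(s;t,x,v)$ at $s = \tau$ yields $\eps_{0}>0$ with
\[
X_{\infty}(\tau - \eps;t,x,v) < 0 \quad \textnormal{and} \quad 0 < X_{\infty}(\tau + \eps;t,x,v) < 1, \quad \forall \, 0 < \eps < \eps_{0},
\]
where the rightmost inequality also uses the definition of $\tau$ as the infimum of times at which $X_{\infty}(\cdot;t,x,v)$ stays in $(0,1)$ up to $t$. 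Fix any $0 < \eps < \min(\eps_{0},t-\tau)$. By continuous dependence of the flow on initial data at the two frozen times $\tau \pm \eps$, there exists $\delta > 0$ such that whenever $|(x',v') - (x,v)| < \delta$,
\[
X_{\infty}(\tau - \eps;t,x',v') < 0 \quad \textnormal{and} \quad 0 < X_{\infty}(\tau + \eps;t,x',v') < 1.
\]
The first inequality forces $t^{\textnormal{inc}}_{\infty}(t,x',v') > \tau - \eps$ (since before that time the perturbed trajectory sits outside $(0,1)$), while the second, combined with continuity of $X_{\infty}(\cdot;t,x',v')$ and the intermediate value theorem, forces $t^{\textnormal{inc}}_{\infty}(t,x',v') < \tau + \eps$. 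This is exactly the desired continuity.

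For (b), the non trivial implication is the forward one. If $(X^{\textnormal{inc}}_{\infty},V^{\textnormal{inc}}_{\infty})(t,x,v) \in \Sigma^{0}$, the exit velocity vanishes and the conservation of energy \eqref{first_integral} gives
\[
\frac{v^{2}}{2} + \phi^{\infty}(x) = \phi^{\infty}\bigl(X^{\textnormal{inc}}_{\infty}(t,x,v)\bigr).
\]
If $X^{\textnormal{inc}}_{\infty}(t,x,v) = 1$, since $\phi^{\infty}$ is strictly decreasing on $[0,1]$ and $(x,v) \in Q$ forces $x < 1$, this gives $\frac{v^{2}}{2} = \phi^{\infty}(1) - \phi^{\infty}(x) < 0$, a contradiction. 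Hence $X^{\textnormal{inc}}_{\infty}(t,x,v) = 0$, and using $\phi^{\infty}(0) = 0$ the energy identity becomes $\frac{v^{2}}{2} + \phi^{\infty}(x) = 0$, i.e.\ $(x,v) \in S$. The converse is immediate from the same energy identity.

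The main obstacle lies in the two-sided localization in (a): one must simultaneously overshoot (push the perturbed trajectory strictly out of $(0,1)$ at $\tau - \eps$) and undershoot (keep it strictly inside $(0,1)$ at $\tau + \eps$), uniformly in a neighborhood of $(x,v)$. The transversality $V^{\textnormal{inc}}_{\infty}(t,x,v) \neq 0$ is precisely what allows the overshoot via a first order Taylor expansion; conversely, the singular set $\Sigma^{0}$ must be excluded because a tangential exit ($V^{\textnormal{inc}}_{\infty} = 0$) would allow nearby trajectories to skim the boundary without crossing and drastically change their hitting time, breaking continuity.
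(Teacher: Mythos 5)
Your proposal is correct and follows essentially the same route as the paper: reduce part (a) to the continuity of the incoming time via the composition with the (Gronwall-continuous) flow map, use the transversality $V^{\textnormal{inc}}_{\infty}\neq 0$ and a first-order Taylor expansion to localize the exit in time, then continuous dependence on initial data to localize in space, and prove (b) by energy conservation together with the monotonicity of $\phi^{\infty}$ to exclude the exit point $x=1$. The only cosmetic difference is that you phrase the upper bound on the perturbed incoming time via the intermediate value theorem rather than directly from the definition of the infimum, which changes nothing of substance.
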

We now study the regions that remain invariant by \eqref{diffeo} and give the associated formula for the incoming and outgoing times.
Note that by \eqref{t_inc_t_0} we have
\[
\forall (t,x,v) \in \RR \times Q, \: t_{\infty}^{\textnormal{inc}}(t,x,v) = t +  t_{\infty}^{\textnormal{inc}}(0,x,v), \quad t_{\infty}^{\textnormal{out}}(t,x,v) = t +  t_{\infty}^{\textnormal{out}}(0,x,v),
\]
which encodes the fact that for an autonomous differential equation the dynamic is invariant by translation in time. So it is sufficient to study the characteristics which started at time $t = 0.$
We prove the following.
\begin{lemma}[Invariant regions]\label{invariant_region} Let $r \geq 0$.  The sets $D^{+}_{r}$, $D^{-}$,$D^{\pm}$ and $S$ are left invariant by the flow \eqref{diffeo}. More precisely, for any set $E \in \lbrace D^{+}_{r},D^{-},D^{\pm}, S \rbrace$ we have
\begin{align}
     \quad (x,v) \in E \Longrightarrow \forall s \in \big ( t_{\infty}^{\textnormal{inc}}(0,x,v), t_{\infty}^{\textnormal{out}}(0,x,v) \big), \: (X_{\infty},V_{\infty})(s;0,x,v) \in E.
\end{align}
\begin{proof} 
\emph{Invariance in $D^{+}_{r}$}. Let $(x,v) \in D^{+}_{r}.$ By conservation of the energy  \eqref{first_integral} one has for all $s \in \big( t_{\infty}^{\textnormal{inc}}(0,x,v), t_{\infty}^{\textnormal{out}}(0,x,v) \big) $, 
\[
V_{\infty}^2\big(s;0,x,v\big) = v^2 + 2\phi^{\infty}(x) -2 \phi^{\infty}\Big( X_{\infty}\big(s;0,x, v\big) \Big)
\] 
and since $v^2 + 2\phi^{\infty}(x) > r^2$ we obtain  
\begin{align*}
&| V_{\infty}(s;0,x,v) | = \sqrt{v^2 + 2(\phi^{\infty}(x) -\phi^{\infty}(X_{\infty}(s;0,x,v)) )} >\\
& \sqrt{r^2 -2\phi^{\infty}(X_{\infty}(s;0,x,v))}.
\end{align*}
Let us now prove that $V_{\infty}\big(s;0,x,v\big)$ is positive. Since $\phi^{\infty}$ is decreasing and $\mathscr{C}^{1}$, by \eqref{ode-char} we deduce that $s \mapsto V_{\infty}\big(s;0,x,v\big)$ is increasing. Therefore 
if $
s \geq t_{\infty}^{\textnormal{inc}}(0,x,v)$ then $
V_{\infty}\big(s;0,x,v\big) \geq V_{\infty}^{\textnormal{inc}}(0,x,v).
$
We now claim that $V_{\infty}^{\textnormal{inc}}(0,x,v) \geq 0.$ For the sake of the contradiction, let us assume the opposite. Since $V_{\infty}\big(t;0,x,v\big) = v$ and $v > 0$, and the function $V_{\infty}\big( \cdot ; t,x,v \big)$ is continuous on its interval of definition, there exists $t^{0} \in (t_{\infty}^{\textnormal{inc}}(t,x,v),t)$ such that $V_{\infty}\big(t^{0};0,x,v \big) = 0.$ Using the conservation of the energy, observe that $\phi^{\infty}(X_{\infty}\big(t^{0};0,x,v\big))  = \frac{v^2}{2} + \phi^{\infty}(x) $ and the right hand side is positive since $(x,v) \in D^{+}_{r}$. It contradicts the negativity of $\phi^{\infty}.$ Hence $ V_{\infty}^{\textnormal{inc}}(0,x,v)\geq 0$ and thus $|V_{\infty}\big(s;0,x,v\big)| = V_{\infty}\big(s;0,x,v \big) > \sqrt{-2\phi^{\infty}\Big(X_{\infty}\big(s;0,x,v \big)\Big) + r^2}.$ It proves the claim for the set $D^{+}_{r}.$ 

\emph{Invariance in $D^{-}.$}
Consider the symmetry $\mathcal{R} : (x,v) \in \RR^2 \mapsto (x,-v)$ and observe that $\mathcal{R}(D^{+}) = D^{-}.$ Arguing the uniqueness of the characteristics, one has the identiy
\begin{equation}\label{symmetry_comp_flow}
\mathcal{R} \circ \mathcal{F}_{-s,-t}^{\infty} \circ \mathcal{R}  = \mathcal{F}^{\infty}_{s,t}
\end{equation}
where $\mathcal{F}_{s,t}^{\infty}$ is the flow defined in \eqref{diffeo}.
Let $(x,v) \in D^{-}$.  One has $\mathcal{R}(x,v) \in D^{+}$ and $\mathcal{F}_{-s,-t}^{\infty}(\mathcal{R}(x,v)) \in D^{+}$ for $s \in \big( t_{\infty}^{\textnormal{inc}}(0,x,v), t_{\infty}^{\textnormal{out}}(0,x,v) \big)$ by symmetry and invariance in $D^{+}.$ It implies
$\mathcal{R} \circ \mathcal{F}_{-s,t}^{\infty} \circ \mathcal{R} (x,v) \in \mathcal{R}(D^{+}) = D^{-}.$ Using the identity \eqref{symmetry_comp_flow} it yields $\mathcal{F}_{s,t}^{\infty}(x,v) \in D^{-}$ which is the expected claim.

\emph{Invariance in $D^{\pm}$.}
Let $(x,v) \in D^{\pm}.$ By conservation of the energy one has for all $s \in \big( t_{\infty}^{\textnormal{inc}}(0,x,v), t_{\infty}^{\textnormal{out}}(0,x,v) \big)$, $\frac{V_{\infty}^{2}(s;0,x,v)}{2} + \phi^{\infty}(X_{\infty}(s;0,x,v)) = \frac{v^2}{2} + \phi^{\infty}(x) <  0$ which is the expected inequality.
Eventually the invariance in $S$ is trivial. The proof is achieved.
\end{proof}
\end{lemma}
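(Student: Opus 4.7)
The plan is to use the conservation of microscopic energy \eqref{first_integral} along characteristics, which is the single tool that drives all four invariance statements. Indeed, each of the sets $D^+_r$, $D^-$, $D^\pm$, $S$ is a sub/super level set, or level set, of $E(x,v) := \tfrac{v^2}{2} + \phi^\infty(x)$, so the preservation of $E$ under the flow handles the \emph{magnitude} of $V_\infty$ automatically. The only real work is tracking the \emph{sign} of $V_\infty$ in the cases $D^+_r$ and $D^-$, where the sets distinguish two components sharing the same energy level.

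\emph{Case $D^+_r$.} Fix $(x,v) \in D^+_r$, so $E := \tfrac{v^2}{2} + \phi^\infty(x) > r^2/2 \geq 0$. Energy conservation along $s \mapsto (X_\infty, V_\infty)(s;0,x,v)$ gives
\[
V_\infty^2(s;0,x,v) = 2E - 2\phi^\infty\bigl(X_\infty(s;0,x,v)\bigr) > r^2 - 2 \phi^\infty\bigl(X_\infty(s;0,x,v)\bigr),
\]
which is the correct magnitude bound. To rule out a sign flip, I argue by contradiction: if $V_\infty(s_0) = 0$ for some $s_0$ in the life interval, then at $s_0$ one has $\phi^\infty(X_\infty(s_0)) = E > 0$, which contradicts $\phi^\infty \le 0$ on $[0,1]$. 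Since $V_\infty$ is continuous and $V_\infty(0) = v > 0$, it stays strictly positive on the whole life interval, giving $V_\infty(s) > \sqrt{-2\phi^\infty(X_\infty(s)) + r^2}$ as required.

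\emph{Case $D^-$.} I would exploit the reversibility of the autonomous Hamiltonian system: the involution $\mathcal{R}(x,v) = (x,-v)$ combined with time reversal satisfies $\mathcal{R} \circ \mathcal{F}^\infty_{-s,-t} \circ \mathcal{R} = \mathcal{F}^\infty_{s,t}$, which follows from the uniqueness part of Cauchy--Lipschitz applied to \eqref{ode-char} and the fact that $-\partial_x \phi^\infty$ is even in $v$ (it does not depend on $v$). Since $\mathcal{R}(D^-) = D^+ = D^+_0$, the already established invariance of $D^+_0$ transports under $\mathcal{R}$ to invariance of $D^-$.

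\emph{Cases $D^\pm$ and $S$.} Both are immediate from energy conservation without any sign subtlety: $E < 0$ at the start stays $<0$, yielding $V_\infty^2(s) < -2\phi^\infty(X_\infty(s))$; likewise $E = 0$ stays $0$ on $S$.

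The only delicate step is the sign preservation inside $D^+_r$, specifically the limiting case $r=0$ where $E$ is only known to be strictly positive rather than bounded away from $0$. This is precisely where the strict inequality $\phi^\infty \leq 0$ (with the strict inequality $\phi^\infty(x_0) < 0$ at any interior point) is used, so I do not expect further technical complications.
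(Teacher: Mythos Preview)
Your proof is correct and follows essentially the same approach as the paper: energy conservation for the magnitude and for $D^{\pm}$, $S$; the time-reversal symmetry $\mathcal{R}\circ\mathcal{F}^\infty_{-s,-t}\circ\mathcal{R}=\mathcal{F}^\infty_{s,t}$ for $D^-$; and a contradiction via $\phi^\infty(X_\infty(s_0))=E>0$ against $\phi^\infty\le 0$ for the sign in $D^+_r$. Your sign argument is in fact slightly more direct than the paper's, which routes the same contradiction through the monotonicity of $V_\infty$ and the nonnegativity of $V_\infty^{\textnormal{inc}}$, but the core idea is identical.
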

We now provide  quasi-explicit formula for the incoming and the outgoing time defined in \eqref{inc_time},\eqref{out_time}.  
\begin{lemma}[Explicit formulas for the incoming and the outgoing time] \label{explicit_times}
One has the following  formulas for the incoming and the outgoing times in $Q \setminus S,$
\begin{align} \label{tinc_explicit}
&t_{\infty}^{\textnormal{inc}}(0,x,v) = \begin{cases}  - \int_{0}^{x} \frac{\dd u}{\sqrt{v^2 + 2( \phi^{\infty}(x) - \phi^{\infty}(u))}} & (x,v) \in D^{+}
\\
-\int_{x_0}^{x} \frac{\dd u}{\sqrt{v^2 + 2(\phi^{\infty}(x)-\phi^{\infty}(u))}} - \int_{x_0}^{1} \frac{\dd u}{\sqrt{2( \phi^{\infty}(x_0)- \phi^{\infty}(u))}}, & (x,v) \in D^{\pm}, v > 0, \\
-\int_{x}^{1} \frac{\dd u}{\sqrt{v^2 + 2 (\phi^{\infty}(x)-\phi^{\infty}(u))}}, & (x,v) \in D^{\pm}, v \leq 0,\\
- \int_{x}^{1} \frac{\dd u}{\sqrt{v^2 + 2( \phi^{\infty}(x) - \phi^{\infty}(u))}} & (x,v) \in D^{-},
\end{cases}
\end{align}
\begin{align} \label{tout_explicit}
&t_{\infty}^{\textnormal{out}}(0,x,v) = \begin{cases}   \int_{x}^{1} \frac{\dd u}{\sqrt{v^2 + 2( \phi^{\infty}(x) - \phi^{\infty}(u))}} & (x,v) \in D^{+},
\\
\int_{x_0}^{x} \frac{\dd u}{\sqrt{v^2 + 2(\phi^{\infty}(x)-\phi^{\infty}(u))}} + \int_{x_0}^{1} \frac{\dd u}{\sqrt{2( \phi^{\infty}(x_0)- \phi^{\infty}(u))}}, & (x,v) \in D^{\pm}, v < 0, \\
\int_{x}^{1} \frac{\dd u}{\sqrt{v^2 + 2 (\phi^{\infty}(x)-\phi^{\infty}(u))}}, & (x,v) \in D^{\pm}, v \geq 0\\
\int_{0}^{x} \frac{\dd u}{\sqrt{v^2 + 2( \phi^{\infty}(x) - \phi^{\infty}(u))}} & (x,v) \in D^{-},
\end{cases}
\end{align}
where $x_0 =  \big(\phi^{\infty}\big)^{-1}\big( \frac{v^2}{2} + \phi^{\infty}(x) \big)$. 
\begin{proof} 
\emph{ Formula in $D^{+} \cup D^{-}$.}
Let $(x,v) \in D^{+}.$ \newline
Set $I(x,v) := (t_{\infty}^{\textnormal{inc}}(0,x,v), t_{\infty}^{\textnormal{out}}(0,x,v))$. Observe that by the Lemma \ref{invariant_region}, for all $s \in I(x,v)$, $V_{\infty}\big(s;0,x,v \big) > 0.$ Then, the function $s \in \overline{I(x,v)}\mapsto X_{\infty}(s;0,x,v)$ is monotone increasing and $\mathscr{C}^{1}$ on $\overline{I(x,v)}$. Therefore $X_{\infty}(\cdot;0,x,v)$ is onto and into from $\overline{I(x,v)}$ to $[0,1]$. By the bijection theorem, it has a unique inverse denoted $\tau : [0,1] \rightarrow \overline{I}(x,v)$ which is continuous and verifies
\begin{align} \label{inverse_relation}
    \forall u \in (0,1), \:  X_{\infty}(\tau(u); 0,x,v) = u.
\end{align} Since $X_{\infty}(\cdot;0,x,v)$ is $\mathscr{C}^{1}$ we deduce from  the global inverse mapping theorem that the function $\tau$ is $\mathscr{C}^{1}(0,1)$. One therefore differentiates \eqref{inverse_relation} to get the Cauchy problem
\[
\begin{cases}

\forall u \in (0,1), \: \frac{\dd}{\dd u} \tau(u)  = \frac{1}{V_{\infty}(\tau(u); 0, x,v)},\\
\tau(x) = 0.
\end{cases}
\]
Invoking the conservation of the energy and the fact that $V_{\infty}(\cdot;0,x,v)$ is positive, it yields $V_{\infty}(\tau(u);0,x,v) = \sqrt{v^2 + 2(\phi^{\infty}(x)-\phi^{\infty}(u))}.$ By definition, $\tau(0) =t_{\infty}^{\textnormal{inc}}(0,x,v)$ and $\tau(1) = t_{\infty}^{\textnormal{out}}(0,x,v)$. Since $s \in \RR \mapsto V_{\infty}(s;0,x,v)$ is continuous and $\tau \in \mathscr{C}^{0}[0,1],$ the function $\tau$ is also $\mathscr{C}^{1}[0,1]$. Integrating the Cauchy problem both on $(0,x)$ and on $(x,1)$  yields the expected formulas for $t_{\infty}^{\textnormal{inc}}(0,x,v)$ and $t_{\infty}^{\textnormal{out}}(0,x,v).$ A similar reasoning yields the formulas in $D^{-}.$

\emph{Formula in $D^{\pm}$.} Let $(x,v) \in D^{\pm}.$ Set $I(x,v) := (t_{\infty}^{\textnormal{inc}}(0,x,v), t_{\infty}^{\textnormal{out}}(0,x,v))$.  There is two cases.
If $v > 0$ then by a standard continuity and monotony argument there exists a unique $t^{0}(x,v) \in (t_{\infty}^{\textnormal{inc}}(0,x,v), 0) $ such that $V_{\infty}(t^{0}(x,v);0,x,v) = 0.$ Then set $x_{0} = X_{\infty}(t^{0}(x,v);0,x,v).$ Using the conservation of the energy and the fact that $\phi^{\infty}$ is a bijection from  $[0,1]$ to $[\phi_{b},0]$ one has
$x_0  = \big(\phi^{\infty}\big)^{-1}\big( \frac{v^2}{2} + \phi^{\infty}(x) \big).$ Following backward in time the characteristic  which started at time $t= 0$ from the point of coordinate $(x,v)$, one decomposes the incoming time as follows,
\[
t_{\infty}^{\textnormal{inc}}(0;x,v) = t^{0}(x,v) + t_{\infty}^{\textnormal{inc}}(t^{0}(x,v);x_0,0).
\]
Observe that for $t > t^{0}(x,v)$, $V_{\infty}(t; 0,x,v) > 0$ and for $t < t^{0}(x,v), V_{\infty}(t; 0,x,v) < 0.$ One may again use the global inversion theorem on each branch of the characteristic (for $t > t^{0}(x,v)$ and for $t < t^{0}(x,v)$)  to obtain
\begin{align}
&t^{0}(x,v) = -\int_{x_0}^{x} \frac{\dd u}{\sqrt{v^2 + 2(\phi^{\infty}(x)-\phi^{\infty}(u))}},\\
&t_{\infty}^{\textnormal{inc}}(t^{0}(x,v);x_0,0) = -\int_{x_0}^{1} \frac{\dd u}{\sqrt{v^2 + 2 (\phi^{\infty}(x)-\phi^{\infty}(u))}}
\end{align}
where the singularity at $ u = x_{0}$ is integrable because one has by concavity and monotony of $\phi^{\infty}$, for all $u \in [0,1]$, $ \phi^{\infty}(u) \leq \phi^{\infty}(x_0) + (u-x_{0}) \partial_{x} \phi^{\infty}(x_{0})$ with $\partial_{x} \phi^{\infty}(x_{0}) < 0.$
Combining both expressions, one obtains the expected formula for $t_{\infty}^{\textnormal{inc}}(0,x,v).$ To get the formula for $t_{\infty}^{\textnormal{out}}(0,x,v)$, it suffices to follow the same characteristic but forward in time.
Eventually, the case $v \leq 0$, is treated analogously to the case when $(x,v) \in D^{-}.$
\end{proof}
\end{lemma}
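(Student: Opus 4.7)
The plan is to exploit the fact that in each of the four regions the sign of $V_\infty$ along the characteristic is either constant or changes sign at a single turning point. This makes $s\mapsto X_\infty(s;0,x,v)$ piecewise strictly monotone, so it can be inverted; conservation of energy \eqref{first_integral} then provides $|V_\infty|$ as an explicit function of position. For $(x,v)\in D^+$, Lemma \eqref{invariant_region} gives $V_\infty>0$ on the whole lifespan $I(x,v):=(t_\infty^{\textnormal{inc}},t_\infty^{\textnormal{out}})$, so $X_\infty(\cdot;0,x,v)$ is a $\mathscr{C}^1$ strictly increasing bijection from $\overline{I(x,v)}$ onto $[0,1]$. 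Its inverse $\tau$ satisfies $\tau(x)=0$, $\tau(0)=t_\infty^{\textnormal{inc}}$, $\tau(1)=t_\infty^{\textnormal{out}}$, and differentiating $X_\infty(\tau(u);0,x,v)=u$ combined with energy conservation yields
$$\tau'(u)=\frac{1}{\sqrt{v^2+2(\phi^\infty(x)-\phi^\infty(u))}}.$$
Integrating from $x$ to $0$ and from $x$ to $1$ gives the two stated formulas. The case $(x,v)\in D^-$ is symmetric, with $V_\infty<0$ throughout.

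For $(x,v)\in D^\pm$ we have $v^2+2\phi^\infty(x)<0$, so conservation of energy forces $V_\infty$ to vanish at some unique time $t^0\in I(x,v)$, at which $X_\infty$ attains the value $x_0:=(\phi^\infty)^{-1}(v^2/2+\phi^\infty(x))$; this is well defined because $\phi^\infty:[0,1]\to[\phi_b,0]$ is a decreasing bijection. If $v>0$, then $V_\infty<0$ on $(t_\infty^{\textnormal{inc}},t^0)$ and $V_\infty>0$ on $(t^0,t_\infty^{\textnormal{out}})$; if $v<0$ the picture is reversed. On each monotone branch the inversion argument from the first step applies verbatim, so $t_\infty^{\textnormal{inc}}$ and $t_\infty^{\textnormal{out}}$ each decompose into two integrals, one over the segment between $x$ and $x_0$ and one over the segment between $x_0$ and the corresponding endpoint $0$ or $1$, matching the entries in the statement.

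The one delicate point I anticipate is the integrability of the square-root singularity at $u=x_0$ in the $D^\pm$ formulas. I would resolve it using the concavity of $\phi^\infty$ together with $\partial_x\phi^\infty(x_0)<0$: the tangent inequality $\phi^\infty(u)\leq\phi^\infty(x_0)+\partial_x\phi^\infty(x_0)(u-x_0)$ gives $\phi^\infty(x_0)-\phi^\infty(u)\geq|\partial_x\phi^\infty(x_0)|\,|u-x_0|$ for every $u\in[0,1]$, so the integrand is dominated by $C\,|u-x_0|^{-1/2}$, which is integrable on any bounded interval. The rest is routine bookkeeping to glue the two monotone branches together.
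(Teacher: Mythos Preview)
Your proposal is correct and follows essentially the same approach as the paper: invert the monotone map $s\mapsto X_\infty(s;0,x,v)$ on each branch where $V_\infty$ has constant sign, use energy conservation to express $|V_\infty|$ as a function of position, integrate, and handle the $D^\pm$ turning point via the concavity bound $\phi^\infty(x_0)-\phi^\infty(u)\geq |\partial_x\phi^\infty(x_0)|\,|u-x_0|$ to show integrability. One small slip in your bookkeeping sketch: in $D^\pm$ with $v>0$ the turning time satisfies $t^0<0$, so only $t_\infty^{\textnormal{inc}}$ splits into two integrals while $t_\infty^{\textnormal{out}}$ is the single integral $\int_x^1$ (and symmetrically for $v<0$), exactly as the case distinction in the statement shows.
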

As a corollary of the above explicit formulas, we obtain the following bounds which depend on the equilibrium potential $\phi^{\infty}.$ \begin{corollary}[Bound on the incoming and outgoing times] \label{estimate_incoming_time} 
We have the following bounds
\begin{itemize}
\item[a)] For every $r > 0$ we have the bounds in $D^{+}_{r}$,
\begin{align}
-\frac{1}{r} \leq  \underset{ (x,v) \in D^{+}_{r}}{\inf}  t_{\infty}^{\textnormal{inc}}(0,x,v),  \quad \underset{ (x,v) \in D^{+}_{r}}{\sup}  t_{\infty}^{\textnormal{out}}(0,x,v)  \leq  \frac{1}{r}. \label{estimate_on_D_r}
\end{align}
\item[b)] More generally, we have for all $(x,v) \in Q \setminus S$
\begin{align}
 \left \vert t_{\infty}^{\textnormal{inc}}(0,x,v) \right \vert \leq t^{-}(x,v), \quad \left \vert t_{\infty}^{\textnormal{out}}(0,x,v) \right \vert \leq t^{+}(x,v),
\end{align}
where 
\begin{equation}
t^{-}(x,v) = \begin{cases}
\frac{\sqrt{-2\phi^{\infty}(x)}}{\left \vert \partial_{x} \phi^{\infty}(0) \right \vert} & (x,v) \in D^{+},\\
\frac{\sqrt{2(\phi^{\infty}(x_0)-\phi^{\infty}(x))} }{\left \vert \partial_{x} \phi^{\infty}(x_0) \right \vert} + \frac{\sqrt{2(\phi^{\infty}(x_0)-\phi^{\infty}(1))} }{\left \vert \partial_{x} \phi^{\infty}(x_0) \right \vert}&  (x,v) \in D^{\pm}, v > 0,\\
\frac{\sqrt{2(\phi^{\infty}(x)-\phi^{\infty}(1))} }{\left \vert \partial_{x} \phi^{\infty}(x) \right \vert}  & (x,v) \in D^{\pm}, v \leq 0,\\
\frac{\sqrt{-2\phi^{\infty}(1)} - \sqrt{2 \phi^{\infty}(x)}}{\left \vert \partial_{x} \phi^{\infty}(0) \right \vert} &  (x,v) \in D^{-},\\
\end{cases}
\end{equation}
\begin{equation}
t^{+}(x,v) = \begin{cases}
\frac{\sqrt{-2\phi^{\infty}(1)} - \sqrt{2 \phi^{\infty}(x)}}{\left \vert \partial_{x} \phi^{\infty}(0) \right \vert}  & (x,v) \in D^{+},\\
\frac{\sqrt{2(\phi^{\infty}(x_0)-\phi^{\infty}(x))} }{\left \vert \partial_{x} \phi^{\infty}(x_0) \right \vert} + \frac{\sqrt{2(\phi^{\infty}(x_0)-\phi^{\infty}(1))} }{\left \vert \partial_{x} \phi^{\infty}(x_0) \right \vert}&  (x,v) \in D^{\pm}, v < 0,\\
\frac{\sqrt{2(\phi^{\infty}(x)-\phi^{\infty}(1))} }{\left \vert \partial_{x} \phi^{\infty}(x) \right \vert}  & (x,v) \in D^{\pm}, v \geq 0,\\
\frac{\sqrt{-2\phi^{\infty}(x)}}{\left \vert \partial_{x} \phi^{\infty}(0) \right \vert}  &  (x,v) \in D^{-}.\\
\end{cases}
\end{equation}

with $x_0 =  \big(\phi^{\infty}\big)^{-1}\big( \frac{v^2}{2} + \phi^{\infty}(x) \big)$ are bounded functions. Moreover we have the uniform bounds in $D^{+}$
\begin{align}
\underset{ (x,v) \in D^{+} } {\sup }\big | t^{-} (x,v) \big |   \leq \frac{\sqrt{-2\phi_{b}}}{\vert \partial_{x} \phi^{\infty}(0) \vert}, \quad \underset{ (x,v) \in D^{+}} {\sup }\big | t^{+} (x,v) \big |   \leq   \frac{\sqrt{-2\phi_{b}}}{\vert \partial_{x} \phi^{\infty}(0) \vert} \label{bound_time}
\end{align}
where we recall that $\phi_{b} < 0 $ is independent of $\lambda > 0$ and $\partial_{x} \phi^{\infty}(0) < 0$ thanks to $\eqref{estimate_ponct_phi_eq}.$ 

\end{itemize}
\begin{proof}
\emph{Proof of a)} Let $r  > 0$ and $(x,v) \in D^{+}_{r}$ where $D^{+}_{r}$ is defined in \eqref{D_r}. Then we have $v^2 + 2 \phi^{\infty}(x)  > r^2.$  Since $\phi$ is non positive we obtain
\begin{align*}
t^{\textnormal{inc}}_{\infty}(0,x,v)  = -  \int_{0}^{x} \frac{ \dd u }{\sqrt{ v^2 + 2 \phi^{\infty}(x) - 2\phi^{\infty}(u) } } > - \frac{x}{r} >  -\frac{1}{r}.
\end{align*}
Taking the infimum on $D^{+}_{r}$ yields $\underset{ (x,v) \in D^{+}_{r} }{\inf} t^{\textnormal{inc}}_{\infty}(0,x,v) \geq - \frac{1}{r}.$
The other bound is obtained in the same manner.
\emph{Proof of b)}
We only do the proof for the incoming time since the proof is similar for the outgoing time. We have three cases: 
if $(x,v) \in D^{+} \cup D^{-}$, then  $v^2 + 2 \phi^{\infty}(x) > 0$. Therefore one has,
\begin{align*}
\left \vert t_{\infty}^{\textnormal{inc}}(0,x,v)\right \vert  \leq \begin{cases}
\int_{0}^{x} \frac{\dd u}{\sqrt{-2\phi^{\infty}(u)}} & (x,v) \in D^{+},\\
\int_{x}^{1}  \frac{\dd u}{\sqrt{-2\phi^{\infty}(u)}} & (x,v) \in D^{-}.
\end{cases}
\end{align*}
Using the change of variable $u \mapsto \varphi = - \phi^{\infty}(u)$ and the fact that $\partial_{x} \phi^{\infty}$ is decreasing, we obtain the expected estimate.
If $(x,v) \in D^{\pm}$ with $v > 0$ then  $2 \phi^{\infty}(x_{0}) = v^2 + 2\phi^{\infty}(x)$. So,
\begin{align*}
\left \vert t_{\infty}^{\textnormal{inc}}(0,x,v)\right \vert  \leq \int_{x_0}^{x} \frac{\dd u}{\sqrt{2(\phi^{\infty}(x_0)-\phi^{\infty}(u))}} +  \int_{x_0}^{1} \frac{\dd u}{\sqrt{2(\phi^{\infty}(x_0)-\phi^{\infty}(u))}} .
\end{align*}
Using again the change of variable $u \mapsto \varphi = -\phi^{\infty}(u)$ and the fact  that $ \phi^{\infty}$ is decreasing and that $\partial_{x} \phi^{\infty} $ is also decreasing with $\partial_{x} \phi^{\infty}< 0 $ in $[0,1]$, we get the expected estimate. Eventually, if $(x,v) \in D^{\pm}$ with $v \leq 0$, we use the fact that $v^2 + 2(\phi^{\infty}(x) - \phi^{\infty}(u)) \geq 2(\phi^{\infty}(x) - \phi^{\infty}(u))$ for all $u \in [x,1].$ So using again the change of variable $u \mapsto -\phi^{\infty}(u)$ and the fact that $\partial_{x} \phi^{\infty}$ is decreasing we get the desired estimate.

\end{proof}

\end{corollary}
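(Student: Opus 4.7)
The plan is to read both bounds directly off the explicit formulas for $t_{\infty}^{\textnormal{inc}}(0,x,v)$ and $t_{\infty}^{\textnormal{out}}(0,x,v)$ established in Lemma on explicit times. Since the two families of formulas are symmetric (the same trajectory followed forward versus backward in time), I would only detail the estimates for $t_{\infty}^{\textnormal{inc}}$ and observe that the argument for $t_{\infty}^{\textnormal{out}}$ is identical up to reversing the interval of integration.

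For part (a), on $D_r^{+}$ the formula reads $t_{\infty}^{\textnormal{inc}}(0,x,v) = -\int_0^x du/\sqrt{v^2 + 2(\phi^{\infty}(x)-\phi^{\infty}(u))}$. The definition of $D_r^{+}$ gives $v^2 + 2\phi^{\infty}(x) > r^2$, and since $-2\phi^{\infty}(u) \geq 0$ thanks to the sign of $\phi^{\infty}$, the radicand is bounded below by $r^2$. The integrand is therefore bounded by $1/r$, and integrating over $[0,x] \subset [0,1]$ yields the claim.

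For part (b), I would proceed by case analysis according to the region containing $(x,v)$, bounding the radicand from below in each case and then performing the change of variables $\varphi = -\phi^{\infty}(u)$, which transforms $du$ into $d\varphi/|\partial_{x}\phi^{\infty}(u(\varphi))|$. In $D^{+}$, using that $v^2 + 2\phi^{\infty}(x) \geq 0$ one has $v^2 + 2(\phi^{\infty}(x)-\phi^{\infty}(u)) \geq -2\phi^{\infty}(u)$, so $|t_{\infty}^{\textnormal{inc}}(0,x,v)| \leq \int_0^x du/\sqrt{-2\phi^{\infty}(u)}$. The concavity \eqref{hyp_ne_2} of $\phi^{\infty}$ entails that $|\partial_{x}\phi^{\infty}|$ is non-decreasing on $[0,1]$, so $|\partial_{x}\phi^{\infty}(u)| \geq |\partial_{x}\phi^{\infty}(0)| > 0$ by \eqref{estimate_ponct_phi_eq}. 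Factoring this out and integrating the explicit $\varphi$-integral yields $\sqrt{-2\phi^{\infty}(x)}/|\partial_{x}\phi^{\infty}(0)|$, which is precisely $t^{-}(x,v)$ on $D^{+}$. The region $D^{-}$ is treated symmetrically on $[x,1]$, using $\phi^{\infty}(1)=\phi_{b}$ and the same lower bound on $|\partial_{x}\phi^{\infty}|$. In $D^{\pm}$ with $v>0$, the formula from Lemma on explicit times already isolates the turning point $x_0 = (\phi^{\infty})^{-1}(v^2/2 + \phi^{\infty}(x))$, so I would treat each of the two branches separately: on each of them the radicand is bounded below by $2(\phi^{\infty}(x_0)-\phi^{\infty}(u))$, and applying the same change of variables with the lower bound $|\partial_{x}\phi^{\infty}(u)| \geq |\partial_{x}\phi^{\infty}(x_0)|$ (valid for $u \geq x_0$ by concavity) recovers the stated expressions. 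The case $v \leq 0$ in $D^{\pm}$ requires only a single integral, handled as in $D^{-}$. Finally the uniform estimate \eqref{bound_time} on $D^{+}$ follows by plugging $|\phi^{\infty}(x)| \leq |\phi_{b}|$, a consequence of the monotone decay of $\phi^{\infty}$ from $0$ to $\phi_{b}$, into the formula for $t^{\pm}$.

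The main obstacle is the bookkeeping across the four cases of the phase portrait, together with the careful justification of the change of variables in the $D^{\pm}$ case with $v>0$. There one must check that both improper integrals have integrable singularities at the turning point $u=x_0$; this follows from the Taylor expansion $\phi^{\infty}(u)-\phi^{\infty}(x_0) = (u-x_0)\partial_{x}\phi^{\infty}(x_0) + \mathcal{O}((u-x_0)^2)$ together with $\partial_{x}\phi^{\infty}(x_0) < 0$, which produces the expected square-root singularity under the radical. Once the monotonicity of $|\partial_{x}\phi^{\infty}|$ inherited from the concavity of $\phi^{\infty}$ is in hand, each individual estimate collapses to an elementary one-variable integral.
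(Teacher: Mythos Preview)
Your proposal is correct and follows essentially the same approach as the paper: both read the bounds directly off the explicit formulas of Lemma~\ref{explicit_times}, lower-bound the radicand in each region of the phase portrait, and then perform the change of variables $u\mapsto -\phi^{\infty}(u)$ together with the monotonicity of $|\partial_x\phi^{\infty}|$ inherited from the concavity of $\phi^{\infty}$. One minor remark: the concavity of $\phi^{\infty}$ is part of Theorem~\ref{theorem_equilibrium}, not hypothesis~\eqref{hyp_ne_2} (which concerns $n_e$), and in the $D^{\pm}$, $v\leq 0$ case the relevant lower bound is $|\partial_x\phi^{\infty}(u)|\geq|\partial_x\phi^{\infty}(x)|$ for $u\in[x,1]$ rather than $|\partial_x\phi^{\infty}(0)|$, which is what produces the denominator $|\partial_x\phi^{\infty}(x)|$ in the stated formula for $t^{-}$.
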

Using the fact that for any time $t \in \RR$ and $(x,v) \in Q$,  $t^{\textnormal{inc}}_{\infty}(t,x,v) = t + t^{\textnormal{inc}}_{\infty}(0,x,v)$ , $t^{\textnormal{out}}_{\infty}(t,x,v) = t + t^{\textnormal{out}}_{\infty}(0,x,v)$ we obtain that Proposition \ref{egc_linear} is a consequence of the above corollary.

\subsection{Measurability and change of variables in the integrals}
We introduce different subsets of the phase-space defined for $t \geq 0$ by
\begin{align}
\mathcal{A}_{t} := \lbrace (x,v) \in Q\setminus S \: : \: t_{\infty}^{\textnormal{inc}}(t,x,v) < 0 \rbrace , \label{ba}\\
\mathcal{B}_{t} := \lbrace (x,v) \in Q \setminus S \: : \: t_{\infty}^{\textnormal{inc}}(t,x,v) = 0 \rbrace.\label{be}
\end{align}
Points in $\mathcal{A}_{t}$  are on characteristics that do not leave $Q$ on the interval $[0,t]$.
Points in  $\mathcal{B}_{t}$  are on characteristics which reach $\Sigma^{\textnormal{inc}}$ at  time zero.
We do not consider points which are in $S$ because the incoming and outgoing times \eqref{inc_time} and \eqref{out_time} are not continuous in $S$. Since $S$ is  a set of measure zero,  it does not pose any difficulty to define the solution $h$ to \eqref{linear-Vlasov-i} outside $S$ since we are concerned with a mild solution and therefore a function which is defined a.e.

\begin{lemma}[Measurability and change of variable] \label{propreties_of_the_flow} Let $t \geq 0$. We have
\begin{itemize}
\item[a)] The sets $\mathcal{A}_{t}, \mathcal{B}_{t},$  are Borel sets and the set $\mathcal{B}_{t}$ is of Lebesgue measure zero. 
\item[b)] $\mathcal{F}^{\infty}_{0,t}(\mathcal{A}_{t}) = \Big \lbrace (x',v') \in Q \setminus S \: : \: t_{\infty}^{\textnormal{out}}(0,x',v') > t \Big\rbrace.$

\item[c)] For any mesurable function $f : Q \longrightarrow \RR^{+}$, we have
\begin{align*}
\int_{\mathcal{A}_{t}} f\Big( \mathcal{F}^{\infty}_{0,t}(x,v) \Big) \dd x \dd v = \int_{\mathcal{F}^{\infty}_{0,t}(\mathcal{A}_{t})} f( x',v') \dd x' \dd v' \leq \int_{Q} f( x',v') \dd x' \dd v'.
\end{align*}

\end{itemize}
\begin{proof} Let $t \geq 0.$
\newline
\emph{ Proof of a).} By virtue of the Lemma \ref{regularity_lemma}, the function $(x,v) \in Q  \setminus S \mapsto t_{\infty}^{\textnormal{inc}}(t,x,v)$ is continuous. Therefore the sets $\mathcal{A}_{t}$ and $\mathcal{B}_{t}$ are Borel sets since they are respectively open and closed. Since $\mathcal{F}^{\infty}_{0,t}$ is measure preserving the  measure of $\mathcal{B}_{t}$ is equal to the measure of $\mathcal{F}_{0,t}^{\infty}(\mathcal{B}_{t}).$ By definition of $\mathcal{B}_{t}$ we have $\mathcal{F}_{0,t}^{\infty}(\mathcal{B}_{t}) \subset \Sigma^{\textnormal{inc}}$ where $\Sigma^{\textnormal{inc}}$ is a set of measure zero.

\emph{Proof of b).}
We begin with the  embedding $\big \lbrace \mathcal{F}^{\infty}_{0,t}(x,v) \: : \: (x,v) \in \mathcal{A}_{t} \big \rbrace \subset \big \lbrace (x',v') \in Q \setminus S \: : \: t_{\infty}^{\textnormal{out}}(0,x',v') > t \big\rbrace $. Let $(x,v) \in \mathcal{A}_{t}$ and set $(x',v') = \mathcal{F}^{\infty}_{0,t}(x,v).$ For $ s \in [0,t]$ we have, 
\[
\mathcal{F}_{s,0}^{\infty}(x',v') = \mathcal{F}_{s,0}^{\infty} \circ \mathcal{F}_{0,t}^{\infty}(x,v) = \mathcal{F}_{s,t}(x,v).
\]
By definition of $\mathcal{A}_{t}$, we have $t_{\infty}^{\textnormal{inc}}(t,x,v) < 0$. So, $\mathcal{F}^{\infty}_{s,t}(x,v) \in Q $ for all $s \in [0,t]$. By energy conservation \eqref{first_integral} we have $\mathcal{F}^{\infty}_{s,t}(x,v) \notin S$ for all $s \in [0,t]$ since $(x,v) \notin S.$  Thus, for $s \in [0,t]$ we have $\mathcal{F}^{\infty}_{s,0}(x',v') \in Q \setminus S$ and consequently $t^{\textnormal{out}}_{\infty}(0,x',v') > t.$ We now show the reverse embedding. Let $(x',v') \in Q \setminus S$ such that $t_{\infty}^{\textnormal{out}}(0,x',v') > t.$ Therefore $\mathcal{F}_{s,0}(x',v') \in Q$ for all $s \in [0,t]$. By conservation of energy \eqref{first_integral} we have $\mathcal{F}_{s,0}(x',v') \notin S$  for all $s \in [0,t]$ since $(x',v') \notin S.$ Then set $(x,v) = \mathcal{F}_{t,0}(x',v').$ Since, for all $s \in [0,t]$ we have $\mathcal{F}_{s,t}(x,v) = \mathcal{F}_{s,0}(x',v') \in Q \setminus S$ we deduce that $t_{\infty}^{\textnormal{inc}}(t,x,v) < 0$. Therefore $(x,v) \in \mathcal{A}_{t}$ and $(x',v') = \mathcal{F}_{0,t}(x,v)$. It shows the claim.

\emph{ Proof of c).} It is just a consequence of the measure preserving change of variables $(x',v') \longmapsto (X_{\infty},V_{\infty})(0;t,x,v)$ in the integral.
\end{proof}
\end{lemma}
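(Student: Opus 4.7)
For part a), the plan is to read each set as a preimage under the continuous incoming-time map. Since $S$ is the closed zero-level set of $(x,v)\mapsto v^{2}+2\phi^{\infty}(x)$, it is closed in $Q$ with zero Lebesgue measure, so $Q\setminus S$ is an open Borel subset. On this open set, Lemma \ref{regularity_lemma} combined with its characterization b) guarantees that $(x,v)\mapsto t_{\infty}^{\textnormal{inc}}(t,x,v)$ is continuous, since points in $\mathcal A_t\cup \mathcal B_t$ have incoming exit point in $\{0,1\}\times\mathbb R\setminus\Sigma^{0}$. Therefore $\mathcal A_t=\{t_{\infty}^{\textnormal{inc}}(t,\cdot,\cdot)<0\}\cap(Q\setminus S)$ is open and $\mathcal B_t=\{t_{\infty}^{\textnormal{inc}}(t,\cdot,\cdot)=0\}\cap(Q\setminus S)$ is closed in $Q\setminus S$; both are Borel. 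For the measure of $\mathcal B_t$, I would observe that $\mathcal F^{\infty}_{0,t}(\mathcal B_t)\subset\Sigma^{\textnormal{inc}}$ by definition of the incoming time, and $\Sigma^{\textnormal{inc}}$ is a union of two one-dimensional line segments, hence Lebesgue negligible in $\mathbb R^{2}$; measure preservation of the flow $\mathcal F^{\infty}_{0,t}$ (noted in \eqref{diffeo}) then forces $|\mathcal B_t|=0$.

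\textbf{Part b).} Both inclusions will be established from the group property $\mathcal F_{s,0}^{\infty}\circ\mathcal F^{\infty}_{0,t}=\mathcal F^{\infty}_{s,t}$ and the fact that $S$ is invariant by the flow (Lemma \ref{invariant_region}, together with energy conservation \eqref{first_integral}). For the forward inclusion, I take $(x,v)\in\mathcal A_t$ and let $(x',v')=\mathcal F^{\infty}_{0,t}(x,v)$. Since $t_{\infty}^{\textnormal{inc}}(t,x,v)<0$, for every $s\in[0,t]$ the point $\mathcal F^{\infty}_{s,t}(x,v)=\mathcal F^{\infty}_{s,0}(x',v')$ lies in $Q$; because $(x,v)\notin S$, conservation of the microscopic energy gives $\mathcal F^{\infty}_{s,0}(x',v')\notin S$ either, so $(x',v')\in Q\setminus S$ and $t_{\infty}^{\textnormal{out}}(0,x',v')>t$. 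The reverse inclusion is the mirror argument: given $(x',v')\in Q\setminus S$ with $t_{\infty}^{\textnormal{out}}(0,x',v')>t$, I set $(x,v)=\mathcal F^{\infty}_{t,0}(x',v')$ and use the same identity to deduce that $\mathcal F^{\infty}_{s,t}(x,v)\in Q\setminus S$ for $s\in[0,t]$, hence $(x,v)\in\mathcal A_t$ and $(x',v')=\mathcal F^{\infty}_{0,t}(x,v)$.

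\textbf{Part c).} This is a direct application of the classical change of variable for a $\mathscr C^{1}$ diffeomorphism with Jacobian of modulus one, which is exactly the content of the fact that $\mathcal F^{\infty}_{0,t}$ is measure preserving (the vector field of \eqref{ode-char} has zero divergence in $(x,v)$). Once the identity on $\mathcal A_t$ is written, the inequality follows immediately from the inclusion $\mathcal F^{\infty}_{0,t}(\mathcal A_t)\subset Q$ and the nonnegativity of $f$.

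\textbf{Expected obstacle.} The genuinely delicate point is the continuity of $t_{\infty}^{\textnormal{inc}}$ up to the border of $\mathcal A_t\cup\mathcal B_t$; this is already settled by Lemma \ref{regularity_lemma}, which is why I defined the sets inside $Q\setminus S$. The only remaining subtlety is to verify that the two subsets appearing in b) are indeed the same modulo the removed negligible set $S$, but this comes for free from the invariance of $S$ under the flow. Aside from this bookkeeping, the proof is essentially a careful combination of the regularity lemma, the group property of the autonomous flow, and the measure-preserving character of $\mathcal F^{\infty}_{0,t}$.
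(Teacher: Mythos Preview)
Your proposal is correct and follows essentially the same route as the paper's proof: continuity of $t_{\infty}^{\textnormal{inc}}(t,\cdot,\cdot)$ on $Q\setminus S$ via Lemma~\ref{regularity_lemma} for part a), the group property of the flow together with energy conservation for both inclusions in part b), and the measure-preserving change of variables for part c). Your write-up is slightly more explicit in places (e.g., noting that $S$ is a closed null set and that the vector field is divergence-free), but the argument is the same.
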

\subsection{The proof of wellposedness for \textbf{(LVP)}}

Consider the assumptions of Theorem \ref{stability_linear_eq}. 
Let  $T  > 0$ and fix $\gamma >   \frac{2 \| \partial_{v} f^{\infty} \|_{L^1(Q)} }{\lambda^2}$ and note that $\gamma$ is chosen independently of $T.$ Consider $\mathscr{S}$ the operator defined for $h \in X_{T,\gamma}$ by 
\begin{align}
& \mathscr{S}(h)(t,x,v) = \mathbf{1}_{\mathcal{A}_{t} \cup \mathcal{B}_{t} }(x,v) h_{0} \Big( X_{\infty}\Big(0;t,x,v \Big), V_{\infty}\Big(0;t,x,v\Big) \Big)\label{def_S} \\
    &+ \mathcal{G}(x,v)  \int_{0}^{t} \mathbf{1}_{t_{\infty}^{\textnormal{inc}}(t,x,v) < s }\partial_{x} U (h) \Big(s, X_{\infty}\Big(s;t,x,v \Big) \Big)  V_{\infty}\Big( s;t,x,v \Big) \dd s\nonumber 
\end{align}
for a.e $(t,x,v) \in [0,T] \times Q$, where for a.e $0 \leq s \leq T$, $ U(h)(s, \cdot)$ is the solution to the linearized Poisson equation \eqref{Poisson_f} with the source term $\rho_{h}(s,\cdot) = \int_{\RR} h(s,\cdot,v) \dd v$ and the sets $\mathcal{A}_{t}$ and $\mathcal{B}_{t}$ are given in \eqref{ba}-\eqref{be}.  Remark that the indicatrix functions are a.e equal to those appearing in the mild formulation \eqref{def_f_linear} so that both formulations are in fact equivalent. Set for a.e $(t,x,v) \in [0,T] \times Q$,
\begin{align*}
    &I_1(t,x,v) =   \mathbf{1}_{\mathcal{A}_{t} \cup \mathcal{B}_{t}}(x,v) h_{0} \Big( X_{\infty}\Big(0;t,x,v \Big), V_{\infty}\Big(0;t,x,v\Big) \Big),\\
   & I_2(t,x,v) = \mathcal{G}(x,v)  \int_{0}^{t}   \mathbf{1}_{t_{\infty}^{\textnormal{inc}}(t,x,v) < s } \partial_{x} U (h) \Big(s, X_{\infty}\Big(s;t,x,v \Big) \Big)  V_{\infty}\Big( s;t,x,v \Big) \dd s.
\end{align*}

\emph{ Step 1: Stability estimate in $X_{T,\gamma}.$}
We will show separately that $I_{1}$ and $I_{2}$ belong to $X_{T,\gamma}$.
According to Lemma \ref{propreties_of_the_flow} a), the sets $\mathcal{A}_{t}, \mathcal{B}_{t}$ are Borel sets for every $t \in [0,T]$. Therefore, $I_{1}(t,\cdot,\cdot) ,I_{2}(t,\cdot,\cdot)$ are measurable functions.
Using Lemma \ref{propreties_of_the_flow} c) we have for a.e $t \in [0,T],$

\begin{align*}
\int_{Q} \vert I_{1}(t,x,v) \vert \dd x \dd v = \int_{\mathcal{A}_{t}}  \left \vert h_{0} \Big( \mathcal{F}^{\infty}_{0,t}(x,v) \Big) \right \vert  \dd x \dd v \leq \int_{Q} \left \vert h_{0}(x',v') \right \vert \dd x' \dd v',
\end{align*}
where we have used the fact that $\mathcal{B}_{t}$ is a set of measure zero.
Therefore $I_{1} \in X_{T,\gamma}.$
We now treat $I_{2}.$  Using the fact that $\mathcal{G}$ is supported in $D^{+}$, we have for a.e $t \in [0,T]$
\begin{align*}
&\int_{Q} \left \vert I_{2}(t,x,v) \right \vert \dd x \dd v \\
&= \int_{D^{+}} \left \vert \mathcal{G}(x,v) \right \vert   \left \vert \int_{0}^{t} \mathbf{1}_{t^{\infty}_{\textnormal{inc}}(t,x,v)  < s }  \partial_{x} U(h)(s; X_{\infty}(s;t,x,v))  V_{\infty}(s;t,x,v) \dd s \right \vert  \dd x \dd v.
\end{align*}
Using a triangular inequality and the Fubini-Tonelli theorem we have,
\begin{align*}
&\int_{Q} \left \vert I_{2}(t,x,v) \right \vert \dd x \dd v \\
&\leq \int_{0}^{t} \int_{D^{+}}  \mathbf{1}_{ t^{\infty}_{\textnormal{inc}}(t,x,v)< s } \left \vert \mathcal{G}(x,v) \right \vert  \left \vert  \partial_{x} U(h)(s; X_{\infty}(s;t,x,v))   \right \vert \left \vert V_{\infty}(s;t,x,v)  \right \vert \dd x \dd v \dd s.
\end{align*}
For $s \in (0,t)$, we use the measure preserving change of variable $(x,v) \in D^{+} \longmapsto (x',v') = \mathcal{F}^{\infty}_{s,t}(x,v)$ we therefore obtain
\begin{align*}
&\int_{Q} \left \vert I_{2}(t,x,v) \right \vert \dd x \dd v\\
& \leq \int_{0}^{t} \int_{\mathcal{F}^{\infty}_{s,t}(D^{+})}  \mathbf{1}_{t^{\infty}_{\textnormal{inc}}(t,\mathcal{F}_{t,s}^{\infty}(x',v')) < s } \left \vert \mathcal{G}(\mathcal{F}^{\infty}_{t,s}(x',v')) \right \vert  \left \vert  \partial_{x} U(h)(s; x') \right \vert \vert v' \vert  \dd x' \dd v' \dd s.
\end{align*}
By the linear elliptic estimates of Proposition \ref{estimate_Linf_E}, we have that for a.e $s \in (0,t),$ $\partial_{x} U(h) (s,\cdot) \in L^{\infty}(0,1)$ and therefore
\begin{align*}
& \int_{Q}  \left \vert I_{2}(t,x,v) \right \vert \dd x \dd v \\
& \leq \int_{0}^{t} \| \partial_{x} U (h)(s) \|_{L^{\infty}(0,1)} \int_{\mathcal{F}^{\infty}_{s,t}(D^{+})}  \mathbf{1}_{t^{\infty}_{\textnormal{inc}}(t,\mathcal{F}_{t,s}^{\infty}(x',v')) < s} \left \vert \mathcal{G}(\mathcal{F}^{\infty}_{t,s}(x',v'))\right \vert  \vert v' \vert  \dd x' \dd v'  \dd s.
\end{align*}
By Lemma \ref{invariant_region}, $D^{+}$ is invariant by the flow. Therefore on the set defined for $s \in (0,t)$ by 
\[
\mathcal{D}_{s,t} = \Big \lbrace (x',v') \in \mathcal{F}^{\infty}_{s,t}(D^{+}) \: : \: t^{\infty}_{\textnormal{inc}}(t,\mathcal{F}_{t,s}^{\infty}(x',v')) < s  \Big \rbrace \subset D^{+}
\]
we have $\mathcal{G}(\mathcal{F}^{\infty}_{t,s}(x',v')) = \mathcal{G}(x',v')$ because $\mathcal{G}$ depends only on the microscopic energy which is a conserved quantity along the flow \eqref{first_integral} and $v' > 0.$ We thus get
\begin{align*}
 \int_{Q}  \left \vert I_{2}(t,x,v) \right \vert \dd x \dd v  \leq \int_{0}^{t} \| \partial_{x} U (h)(s) \|_{L^{\infty}(0,1)} \int_{\mathcal{D}_{s,t}}   \left \vert \mathcal{G}(x',v')\right \vert  v'  \dd x' \dd v'  \dd s.
\end{align*}
Since $\mathcal{D}_{s,t} \subset D^{+}$,  by monotony of the integral we obtain the crude bound
\begin{align*}
 \int_{Q}  \left \vert I_{2}(t,x,v) \right \vert \dd x \dd v  \leq \int_{0}^{t} \| \partial_{x} U (h)(s) \|_{L^{\infty}(0,1)} \int_{D^{+}}  \left \vert \mathcal{G}(x',v') \right \vert   v'   \dd x' \dd v'  \dd s.
\end{align*}
Remark that $\| \partial_{v} f^{\infty} \|_{L^{1}(Q)} = \int_{D^{+}} \left \vert \mathcal{G}(x',v')\right \vert  v' \dd x' \dd v'$, we eventually get

\begin{align*}
 \int_{Q}  \left \vert I_{2}(t,x,v) \right \vert \dd x \dd v  \leq  \| \partial_{v} f^{\infty} \|_{L^{1}(Q)}  \int_{0}^{t} \| \partial_{x} U (h)(s) \|_{L^{\infty}(0,1)}  \dd s.
\end{align*}
Combining this upper bound with the linear elliptic estimates \eqref{dxV} and the fact that $\| \rho_{h}(s) \|_{L^{1}(0,1)} \leq \| h(s) \|_{L^{1}(Q)}$, we thus glean
\begin{align}
\int_{Q}  \left \vert I_{2}(t,x,v) \right \vert \dd x \dd v \leq  \frac{2 \| \partial_{v} f^{\infty} \|_{L^{1}(Q)}}{\lambda^2}  \int_{0}^{t}  \|h(s) \|_{L^{1}(Q)} \dd s. \label{linear_estimate}
\end{align}
Now observe that for $ 0 \leq  t \leq  T,$
\begin{align*}
\int_{0}^{t}  \| h(s) \|_{L^{1}(Q)} \dd s \leq  \| h \|_{X_{T,\gamma}} \frac{e^{\gamma t}-1}{\gamma}, 
\end{align*}
So that we obtain that for a.e $ 0 \leq t \leq T,$
\begin{align}
e^{-\gamma t} \int_{Q} \left \vert I_{2}(t,x,v) \right \vert \dd x \dd v \leq  \frac{2 \| \partial_{v} f^{\infty} \|_{L^{1}(Q)} }{ \lambda^2 \gamma}  \| h \|_{X_{T,\gamma}} \label{linear_estimate_final} 
\end{align}
This last estimate shows that $I_{2} \in X_{T,\gamma}.$

\emph{ Step 2: Contraction.}
Since $\gamma > \frac{2 \| \partial_{v} f^{\infty} \|_{L^{1}(Q)} }{\lambda^2} $ and $\mathscr{S}$ is an affine map, the estimate \eqref{linear_estimate_final} shows that for $(f,g) \in X_{T,\gamma} \times X_{T,\gamma},$  we have $\| \mathscr{S}(f) - \mathscr{S}(g) \|_{X_{T,\gamma}} \leq a \| f- g \|_{X_{T,\gamma}}$ with $a=\frac{2 \| \partial_{v} f^{\infty} \|_{L^{1}(Q)} }{\lambda^2 \gamma} < 1. $ Since $\gamma$ is independent of $T$, where $T > 0$ was chosen arbitrarily,  the Banach-Picard theorem applies in $X_{T,\gamma}$ for any $ T > 0$. Therefore,  for any $T > 0$, there exists a unique $h \in X_{T,\gamma}$ such that $\mathscr{S}(h) = h $ in $X_{T,\gamma}.$  It shows that $h$ is a mild solution of the Vlasov equation \eqref{def_f_linear}. By definition of $h$, we have that $s \mapsto \rho_{h}(s,\cdot) = \int_{\RR} h(s,\cdot,v) \dd v \in L^{\infty}([0,T]; L^{1}(0,1))$. Then,  we know from Proposition \ref{estimate_Linf_E} that there is a unique $U(h) \in Y_{T,\gamma}$ which solves the linearized Poisson equation \eqref{Poisson_f} with the source term $\rho_{h}.$  
The proof of continuity in time is tedious and thus omitted.
Since the result holds for arbitrary $T > 0$, we obtain the conclusion.

\subsection{ A delayed Grönwall  lemma}\label{sec:delayed_gronwall}
A consequence of the exit geometric conditions \eqref{egc_linear} will be that $t \longmapsto \|h(t)\|_{L^{1}(Q)}$ verifies for $t \geq T_{r}$ a first order delayed integral equation.  We show in the following that such an integral equation has exponential decaying solutions provided that the rate of increase of the solution is small enough.
We have.
\begin{lemma}[A delayed Grönwall lemma]\label{delayed_Gronwall_lemma}Let $ T > 0$ and $\alpha > 0$ such that $\alpha  T < 1.$ Let $y \in \mathscr{C}(\RR^{+})$ such that for $t \geq T$
\begin{align}
y(t) \leq \alpha \int_{t-T}^{t} y(s) \dd s. \label{fafa}
\end{align}
Then there exists a constant $\kappa \equiv \kappa(\alpha,T) > 0$ such that for $ t \geq 0$
\begin{align}
 y(t) \leq C_{T,\kappa} \exp(-\kappa t)
\end{align}
where  $C_{T,\kappa} = \underset{ t \in [0,T] }{ \sup } \vert y(t) \vert  \exp{(\kappa t)}.$ In addition, we have the lower bound
\begin{align}
\kappa > - \frac{ \log(\alpha T)}{T}.
\end{align}

\begin{proof} Let us first remark that if $y$ is negative on $[0,T]$ then $y$ is also negative on $[T,+\infty).$ Indeed,  if it is not the case then let $t^{*} := \sup \lbrace t > T \: : \: y(s) < 0 \: \forall s \in [0,t) \rbrace$. By continuity of $y$ this point exists and $y(t^{*}) = 0.$ Then applying \eqref{fafa} for
$t = t^{*}$ we have
\begin{align}
y(t^{*}) \leq \alpha \int_{t^{*}-T}^{t^{*}} y(s) \dd s < 0
\end{align}
which yields a contradiction. We now refine this result by proving that if $y$ is non positive on $[0,T]$ then it is also non positive on $[T,+\infty).$ Indeed, consider the auxiliary sequence of functions defined for all $n \in \NN$ by  $y_{n} : t \in \RR^{+} \longmapsto y(t) - \frac{1}{n+1}$. By virtue of the previous result, for each $n \in \mathbb{N}$ we have that $y_{n}$ is negative on $[0,T]$ and therefore all $t \geq 0$,
$y(t) < \frac{1}{n+1}.$ Passing to the limit as $n \longrightarrow +\infty$ yields $y(t) \leq 0$. This shows that $y$ is non positive on $[0,+\infty).$

Consequently, we deduce that if $z \in \mathscr{C}^{1}(\RR^{+})$ verifies the case of equality in \eqref{fafa} and is such that $y_{|[0,T]} \leq z_{|[0,T]}$ then for all $t \geq 0$, $y(t) \leq z(t).$ We then look for a function of the form $t \in \RR^{+} \longmapsto \exp(-\kappa t)$ where $\kappa > 0$ verifies the equality in \eqref{fafa}. It yields the equation for $\kappa$
\begin{align}
-\kappa = \alpha ( 1 - \exp(\kappa T)). \label{fifi}
\end{align}
The function $a : u \in \RR^{+} \longmapsto \alpha (1- \exp(uT)) + u$ is continuous with $a(0) =0$ and $\lim_{ u \rightarrow +\infty} \limits a(u) = -\infty$. It therefore suffices that $a'(0) > 0$ to obtain the existence of $\kappa > 0$ such that $a(\kappa) = 0.$ Note that $a'(0) > 0$ is equivalent to $\alpha T < 1$. Thus, we conclude  that if $\alpha T < 1$ then there is $\kappa  > 0$ which solves \eqref{fifi} and such that the function $z : t \in \RR^{+} \longmapsto C_{T} \exp(-\kappa t)$ verifies $z_{|[0,T]} \geq y_{|[0,T]}$ (by definition of $C_{T}$) and $z(t) = \alpha \int_{t-T}^{t} z(s) \dd s$ for all $t \geq T$.
We then conclude that for all $t \geq 0,$ $y(t) \leq z(t)$ which shows the first part of the claim. 
Eventually, we remark that the function $a$  is increasing  on $[0, - \frac{\log(\alpha T)}{T}]$ with $a(0)= 0$ and decreasing on $\big[- \frac{\log(\alpha T)}{T}, +\infty \big)$. Therefore it is positive on $\Big( 0,- \frac{\log(\alpha T)}{T} \big]$. Since $\kappa$ is a zero of the function $a$ on $\RR^{+}_{\star}$, we have $\kappa > - \frac{\log(\alpha T)}{T}.$
\end{proof}
\end{lemma}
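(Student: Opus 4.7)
The plan is to construct an explicit exponential $z(t) = C e^{-\kappa t}$ that saturates the delayed integral inequality with equality, and to bound $y$ above by $z$ through a comparison argument in the spirit of a maximum principle. The argument factors into three steps: a strict-sign comparison lemma, an approximation upgrading to a non-strict comparison, and the selection of the decay rate $\kappa$.

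First, I would establish the following claim: if $w \in \mathscr{C}(\RR^{+})$ satisfies the delayed inequality on $[T,+\infty)$ and $w(t) < 0$ strictly on $[0,T]$, then $w(t) < 0$ for all $t \geq 0$. Suppose by contradiction that $t^{*} := \inf\{t \geq T : w(t) \geq 0\}$ is finite. By continuity and the definition of the infimum, $w(t^{*}) = 0$ and $w < 0$ on $[0, t^{*})$, so the delayed inequality at $t = t^{*}$ forces $0 = w(t^{*}) \leq \alpha \int_{t^{*}-T}^{t^{*}} w(s)\,\dd s < 0$, a contradiction.

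Second, I would upgrade to the non-strict statement: if $w \leq 0$ on $[0,T]$ and $w$ satisfies the delayed inequality, then $w \leq 0$ on $[0,+\infty)$. For each $n \in \NN$, the shift $w_{n} := w - \tfrac{1}{n+1}$ is strictly negative on $[0,T]$, and a direct computation gives
\[
w_{n}(t) - \alpha \int_{t-T}^{t} w_{n}(s)\,\dd s = \Big( w(t) - \alpha \int_{t-T}^{t} w(s)\,\dd s \Big) + \frac{\alpha T - 1}{n+1} \leq 0,
\]
where the hypothesis $\alpha T < 1$ is used. Thus $w_{n}$ inherits the delayed inequality, the strict comparison yields $w_{n} < 0$ everywhere, i.e.\ $w(t) < \tfrac{1}{n+1}$ for every $t \geq 0$, and letting $n \to +\infty$ gives $w \leq 0$ on $[0,+\infty)$. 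Applied to $w = y - z$, where $z$ is any continuous function that saturates the integral inequality with equality and satisfies $z \geq y$ on $[0,T]$, this comparison delivers $y \leq z$ on $[0,+\infty)$.

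Third, I would construct $z(t) = C e^{-\kappa t}$ explicitly. Substitution into the integral with equality reduces to the characteristic equation $\kappa = \alpha(e^{\kappa T} - 1)$. The function $a(\kappa) := \kappa - \alpha(e^{\kappa T} - 1)$ satisfies $a(0) = 0$, $a'(0) = 1 - \alpha T > 0$, and $a(\kappa) \to -\infty$ as $\kappa \to +\infty$, so the intermediate value theorem produces $\kappa > 0$ solving it. Taking $C = C_{T,\kappa} := \sup_{t \in [0,T]} |y(t)| e^{\kappa t}$, which is finite by continuity of $y$ on the compact interval $[0,T]$, one has $z \geq y$ on $[0,T]$; the comparison from the second step then yields $y(t) \leq C_{T,\kappa} e^{-\kappa t}$ for every $t \geq 0$. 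The main (modest) obstacle is simply keeping track of where the smallness hypothesis $\alpha T < 1$ is used: it enters once to ensure that the perturbation $w_{n}$ inherits the delayed inequality, and again to guarantee the positive root of the characteristic equation; the rest of the argument is essentially structural.
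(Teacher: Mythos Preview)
Your proof is correct and follows essentially the same three-step route as the paper: a strict-sign comparison by contradiction, an approximation to the non-strict version via $w_n = w - \tfrac{1}{n+1}$, and the construction of the exponential barrier through the characteristic equation $\kappa = \alpha(e^{\kappa T}-1)$. Your write-up is in fact slightly more careful than the paper's: you explicitly check that the shifted function $w_n$ still satisfies the delayed inequality, which is where the hypothesis $\alpha T < 1$ enters in that step, whereas the paper invokes ``the previous result'' without verifying this.
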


\subsection{ The proof of stability for \textbf{(LVP)}}
We are now ready to prove the linear stability theorem.
We begin with the \emph{Decay of the $L^{1}$ norm in  $Q \setminus D^{+}.$}
For every $ t \geq 0$ and a.e $(x,v) \in Q$ we have,
\begin{align*}
& h(t,x,v) =  \mathbf{1}_{\mathcal{A}_{t} \cup \mathcal{B}_{t} }(x,v) h_{0} \Big( X_{\infty}\Big(0;t,x,v \Big), V_{\infty}\Big(0;t,x,v\Big) \Big)\nonumber \\
    &+ \mathcal{G}(x,v)  \int_{0}^{t}\mathbf{1}_{t_{\infty}^{\textnormal{inc}}(t,x,v) < s }  \partial_{x} U (h) \Big(s, X_{\infty}\Big(s;t,x,v \Big) \Big)  V_{\infty}\Big( s;t,x,v \Big) \dd s\nonumber  
\end{align*}
where the set $\mathcal{A}_{t},\mathcal{B}_{t}$  are given in \eqref{ba}-\eqref{be}.
Since $\supp \mathcal{G} \subset D^{+},$ we obtain for a.e $(x,v) \in Q \setminus D^{+}$ that
\begin{align*}
h(t,x,v) =\mathbf{1}_{\mathcal{A}_{t} \cup \mathcal{B}_{t} }(x,v)h_{0} \Big( X_{\infty}\Big(0;t,x,v \Big), V_{\infty}\Big(0;t,x,v\Big) \Big).
\end{align*}
Then
\begin{align*}
\| h(t) \|_{L^{1}\big( Q \setminus D^{+} \big)}  = \int_{(Q \setminus (D^{+} \cap S))\cap \mathcal{A}_{t}}  \Big \vert h_{0} \Big( \mathcal{F}^{\infty}_{0,t}(x,v) \Big) \Big \vert  \dd x \dd v
\end{align*}
where we have used again the fact $\mathcal{B}_{t}$ is of measure zero.  Using the measure preserving change of variable $(x,v) \mapsto \Big( x' = X_{\infty}(0;t,x,v), v' = V_{\infty}(0;t,x,v) \Big)$ we get
\begin{align*}
\| h(t) \|_{L^{1}\Big( Q \setminus D^{+} \Big)}  = \int_{ \mathcal{F}_{0,t}^{\infty} \Big( (Q \setminus (D^{+} \cap S) )\cap \mathcal{A}_{t} \Big) }  \vert  h_{0}(x',v')  \vert \dd x' \dd v'.
\end{align*}
Set 
\[
\mathcal{O}_{t} := \mathcal{F}_{0,t}^{\infty} \Big( (Q \setminus (D^{+} \cap S) )\cap \mathcal{A}_{t} \Big).
\]
Following Lemma  \ref{propreties_of_the_flow} b) we deduce that,
\begin{align}
\mathcal{O}_{t}  =  \Big \lbrace (x,v) \in Q \setminus (D^{+} \cap S)  \: : t^{\textnormal{out}}(0,x,v) > t  \Big \rbrace.
\end{align}
Since the outgoing time is a continuous function in $Q \setminus S$, the set $\mathcal{O}_{t}$ is a Borel set.
For  $t = 0$, we have obviously $ \mathcal{O}_{0} = Q \setminus (D^{+} \cap S)$.  If  $t_{1}$ and $t_{2}$ are two positive numbers such that $t_{1} <  t_{2}$ then $ \mathcal{O}_{t_{2}} \subset \mathcal{O}_{t_{1}}$. Therefore, the family $(\mathcal{O}_{t})_{t \in \RR^{+}}$ is non increasing. According to Corollary \eqref{estimate_incoming_time}, the outgoing time is bounded in $Q \setminus (D^{+} \cap S)$ and therefore  if $t \geq \underset{(x,v) \in Q \setminus (D^{+} \cap S)} \sup t_{\infty}^{\textnormal{out}}(0,x,v)$ then $ \mathcal{O}_{t}= \emptyset.$ It yields the claim.

We now prove the \emph{Exponential decay  in $D^{+}_{r}$}. Let $r \geq 0$ and assume that $\supp h_{0} \subset D^{+}_{r}$. Since $\supp \mu' \subset (r,+\infty)$, we have that $\partial_{v} f^{\infty}$ given in \eqref{dv_feq} is such that $\supp  \partial_{v} f^{\infty} \subset D^{+}_{r}.$ Thanks to the representation formula \eqref{def_f_linear} and the invariance of $D^{+}_{r}$ by the stationary flow, we have that
\begin{align}
 \forall t \geq 0, \quad \supp h(t) \subset D^{+}_{r}.
\end{align}
We are now going to prove that $\| h(t) \|_{L^{1}(D^{+}_{r})}$ verifies a delayed Gronwall type inequality of the form \eqref{fafa}. We study separately the contribution of the initial data and the source term. 
\newline
\emph{Study of the initial data.}
Let  $t > 0$ and $(x,v) \in Q$ such that $t_{\infty}^{\textnormal{inc}}(t,x,v) \leq 0.$ First, we observe thanks to the invariance of $D^{+}_{r}$ that:
\begin{align*}
&(X_{\infty},V_{\infty})(0;t,x,v) \in D^{+}_{r} \Longrightarrow \\
 &\exists (x',v') \in D^{+}_{r}, t_{\infty}^{\textnormal{out}}(0,x',v') > t, (x,v) = (X_{\infty},V_{\infty})(t;0,x',v'). 
\end{align*}
Yet, according to the Proposition \ref{egc_linear}, the equilibrium electric field $-\partial_{x} \phi^{\infty}$ satisfies the initial exit geometric condition in time $T_{r}$ with respect to $D^{+}_{r}$. Therefore, \newline $t_{\infty}^{\textnormal{out}}(0,x',v') \leq T_{r}$ for all $(x',v') \in D^{+}_{r}$.  Since $\supp h_{0} \subset D^{+}_{r}$ we deduce that
\[
t \geq T_{r}  \Longrightarrow \mathbf{1}_{t_{\infty}^{\textnormal{inc}}(t,x,v) \leq 0} h_{0} \Big( X_{\infty}\Big(0;t,x,v \Big), V_{\infty}\Big(0;t,x,v\Big) \Big) = 0.
\]

Hence, for $t \geq T_{r}$ and a.e $(x,v) \in Q,$
\begin{align*}
& h(t,x,v) =  \mathcal{G}(x,v)  \int_{0}^{t} \mathbf{1}_{t_{\infty}^{\textnormal{inc}}(t,x,v) < s} \partial_{x} U (h) \Big(s, X_{\infty}\Big(s;t,x,v \Big) \Big)  V_{\infty}\Big( s;t,x,v \Big) \dd s\nonumber .
\end{align*}
\emph{Study of the source term.}
Let $t \geq T_{r}$.  We now estimate the $L^{1}$ norm of $h$ on $D^{+}_{r}$ by a direct computation.
Using a triangular inequality and an $L^{\infty}$ bound on $\partial_{x} U$ we have,
\begin{align*}
&\int_{D^{+}_{r}} \vert h(t,x,v) \vert \dd x \dd v \\
&\leq \int_{D^{+}_{r}}  \Big \vert \mathcal{G}(x,v) \Big \vert \int_{0}^{t}  \mathbf{1}_{t_{\infty}^{\textnormal{inc}}(t,x,v) < s}  \| \partial_{x} U(h)(s) \|_{L^{\infty}(0,1)} \Big \vert V_{\infty}(s;t,x,v)  \Big \vert  \dd s \dd x \dd v\\
& = \int_{0}^{t}  \| \partial_{x} U(h)(s) \|_{L^{\infty}(0,1)}   \int_{D^{+}_{r}} \Big \vert \mathcal{G}(x,v) \Big \vert \mathbf{1}_{  t^{\textnormal{inc}}_{\infty}(t,x,v) < s }\Big \vert V_{\infty}(s;t,x,v)  \Big \vert  \dd x \dd v \dd s
\end{align*}
where we have used Fubini's theorem for the last equality. Define for $s \in (0,t)$
\begin{align}
\mathcal{D}^{r}_{s,t} = \lbrace (x,v) \in D^{+}_{r} \: : \:   t_{\infty}^{\textnormal{inc}}(t,x,v)  < s \rbrace.
\end{align}
Using the measure preserving change of variable $(x,v) \in \mathcal{D}^{r}_{s,t} \longmapsto (x',v') = \mathcal{F}_{s,t}^{\infty}(x,v)$, we then obtain
\begin{align*}
 \int_{D^{+}_{r}} \Big \vert \mathcal{G}(x,v) \Big \vert \mathbf{1}_{  t^{\textnormal{inc}}_{\infty}(t,x,v) < s}\Big \vert V_{\infty}(s;t,x,v)  \Big \vert  \dd x \dd v  = \int_{\mathcal{F}^{\infty}_{s,t}(\mathcal{D}^{r}_{s,t})}  \Big \vert \mathcal{G}(x',v') \Big \vert v'  \dd x' \dd v'
\end{align*}
where, arguing as in the proof Lemma \ref{propreties_of_the_flow} b),  we have
\begin{align}
\mathcal{F}_{s,t}^{\infty}(\mathcal{D}^{r}_{s,t}) = \Big \lbrace (x',v') \in D^{+}_{r} \: : \: t_{\infty}^{\textnormal{out}}(s,x',v') > t \Big \rbrace,
\end{align}
and where we have used the fact that the function $(x,v) \in D^{+} \longmapsto \mathcal{G}(x,v)$ depends only on the microscopic energy (which is a conserved quantity \eqref{first_integral}) and the fact that if $(x',v') \in \mathcal{F}^{\infty}_{s,t}(\mathcal{D}^{r}_{s,t})$ then $v' > 0$.  Thus,
\begin{align*}
 \int_{D^{+}_{r}} \Big \vert \mathcal{G}(x,v) \Big \vert \mathbf{1}_{ s >  t^{\textnormal{inc}}_{\infty}(t,x,v)}\Big \vert V_{\infty}(s;t,x,v)  \Big \vert  \dd x \dd v  = \int_{D^{+}_{r}}   \mathbf{1}_{t_{\infty}^{\textnormal{out}}(s,x',v') > t} \Big \vert \mathcal{G}(x',v') \Big \vert v'  \dd x' \dd v'.
\end{align*}
Gathering this term with the other, we obtain
\begin{align*}
&\int_{D^{+}_{r}} \left \vert h(t,x,v) \right \vert \dd x \dd v  \leq  \int_{0}^{t}   \| \partial_{x} U(h)(s) \|_{L^{\infty}(0,1)}   \int_{D^{+}_{r}} \mathbf{1}_{t_{\infty}^{\textnormal{out}}(s,x',v') > t} \Big \vert \mathcal{G}(x',v') \Big \vert v'  \dd x' \dd v' \dd s\\
&= \int_{D^{+}_{r}} \Big \vert \mathcal{G}(x',v') \Big \vert v'   \int_{0}^{t} \mathbf{1}_{t_{\infty}^{\textnormal{out}}(s,x',v') > t}  \| \partial_{x} U(h)(s) \|_{L^{\infty}(0,1)}  \dd s  \dd x' \dd v' \\
& \leq \int_{D^{+}_{r}} \Big \vert \mathcal{G}(x',v') \Big \vert v' \dd x' \dd v'  \times \underset{ (x',v') \in D^{+}_{r}} {\sup} \int_{0}^{t} \mathbf{1}_{t_{\infty}^{\textnormal{out}}(s,x',v') > t}  \| \partial_{x} U(h)(s) \|_{L^{\infty}(0,1)}  \dd s.
\end{align*}
According to the Proposition \ref{egc_linear}, the equilibrium electric field $-\partial_{x} \phi^{\infty}$ satisfies the internal exit geometric condition  in time $T_{r}$ with respect to $D^{+}_{r}$ on $\RR^{+}$.  So,
\[
\underset{ (s,x,v) \in \RR^{+} \times D^{+}_{r} } \sup (t_{\infty}^{\textnormal{out}}(s,x,v) - s )\leq T_{r}.
\]
Therefore for every $s \in (0,t]$ and $(x',v') \in D^{+}_{r}$,  we have
\[
t_{\infty}^{\textnormal{out}}(s,x',v') > t \Longrightarrow s > t  - T_{r}.
\]
Therefore,
\begin{align*}
 \underset{ (x',v') \in D^{+}} \sup  \int_{0}^{t} \mathbf{1}_{t_{\infty}^{\textnormal{out}}(s,x',v') \geq t}  \| \partial_{x} U(h)(s) \|_{L^{\infty}(0,1)}  \dd s \leq \int_{t - T_{r}}^{t}  \| \partial_{x} U(h)(s) \|_{L^{\infty}(0,1)}  \dd s.
\end{align*}
Using  the elliptic estimate \eqref{dxV} and recalling that $\supp h(t) \subset D^{+}_{r}$ we eventually obtain a closed estimate for $\| h(t) \|_{L^{1}(D^{+}_{r})}$ which writes
\begin{align}
\forall t \geq T_{r}, \quad \| h(t) \|_{L^{1}(D^{+}_{r})} \leq \frac{2 \| \partial_{v} f^{\infty} \|_{L^{1}(D^{+}_{r})}}{\lambda^{2}} \int_{t-T_{r}}^{t}   \| h(s) \|_{L^{1}(D^{+}_{r})} \dd s.
\end{align}
where $\| \partial_{v} f^{\infty} \|_{L^{1}(D^{+}_{r})} = \int_{D^{+}_{r}} \Big \vert \mathcal{G}(x',v') \Big \vert v' \dd x' \dd v' = \int_{r}^{+\infty} \vert \mu'(v) \vert \dd v$. By virtue of Lemma \ref{delayed_Gronwall_lemma} we thus get the expected exponential decay provided that $ \frac{2 \| \partial_{v} f^{\infty} \|_{L^{1}(D^{+}_{r})} T_{r}}{\lambda^{2}} < 1.$ We thus infer \eqref{exponential_decay_h}, and from the elliptic estimates \eqref{dxV} we also have \eqref{exponential_decay_u}. It concludes the study of the linearized Vlasov-Poisson equations.

\section{Non linear stability}\label{sec:non_linear_stab}
We  now investigate the non linear stability of the Vlasov-Poisson system \eqref{Vlasov-i}-\eqref{bc-phi} written in a perturbative form.
Using the same notation, we denote the fluctuation
\begin{align}
h(t,x,v) = f(t,x,v) - f^{\infty}(x,v), \quad  U(t,x) = \phi(t,x) - \phi^{\infty}(x).
\end{align}
It satisfies the perturbative form of \textbf{(VP)} that we denote in short \textbf{(PVP):}

\[
\begin{cases}
&\partial_{t} h  + v \partial_{x} h - \partial_{x} (\phi^{\infty}+U) \partial_{v} h =  \partial_{x} U \partial_{v} f^{\infty}, \quad (t,x,v) \in (0,+\infty) \times Q, \\
&-\lambda^2 \partial_{xx} U +   n_{e}\big( \phi^{\infty}+U \big) - n_{e}\big(\phi^{\infty} \big) = \int_{\RR} h(\cdot, \cdot,v) \dd v, \quad  (t,x) \in \RR^{+} \times (0,1), \\
& h(t,x,v) = 0, \quad (t,x,v) \in (0,+\infty) \times \Sigma^{\textnormal{inc}},\\
&U(t,0) = 0, \quad U(t,1) = 0, \quad t \geq 0,\\
&h(t=0, x,v) = h_{0}(x,v), \quad (x,v) \in Q .
\end{cases}
\]
We shall prove a non linear version of Theorem \ref{stability_linear_eq} b) for \textbf{(PVP)} under smallness condition on the equilibrium and on the initial fluctuation. Though it is not the main purpose of this work to prove it, we need $U \in \mathscr{C}\big( \RR^{+}; W^{2,\infty}(0,1)\big)$ to define properly the characteristics. Following the work of BenAbdallah \cite{Ben-Abdallah} we restrict the set of admissible initial conditions. We define
\begin{align}
\mathscr{A} = \Big \lbrace h \in (L^{1} \cap L^{\infty})(Q) \: : \: \int_{Q} h(x,v) \vert v \vert ^2 \dd x \dd v < +\infty , \: \vert v\vert ^2 h(x,v) \in L^{\infty}(Q) \Big \rbrace.
\end{align}

For the non linear stability analysis we shall consider initial fluctuation $h_{0}$ that are supported in $D^{+}_{r}$ for $r >0$ large enough and we will localize the solution in $D^{+}_{\frac{r}{2}}$ provided the norm of $h_{0}$ is small enough in $L^{1}(Q)$. To do so, we introduce the fonction defined on $\RR^{+}_{\star}$ by
\begin{equation}
 \forall r > 0, \quad \delta(r):= r^{2}\Big( \exp(-2T_{r}) - \frac{1}{8}\Big) - 2T_{r} \vert \phi_{b} \vert  \label{delta_r},
\end{equation}
where we recall that $T_{r} = \frac{1}{r}$ for $r > 0$. Since $\vert \phi_{b} \vert > 0$, the function $\delta$ is increasing on $\RR^{+}_{\star}$, it is moreover continuous and such that $\lim_{r \rightarrow 0^{+}}  \limits\delta (r) = -\infty$ and $\lim_{r \rightarrow +\infty} \limits \delta(r) = +\infty$. Therefore, we  consider $r^{\star} \equiv r^{\star}(\phi_{b}) > 0$ to be the unique positive zero of the function $\delta$ and we have
\begin{align}
\forall r > r^{\star}, \quad \delta(r) > 0.
\end{align}
As for the linear analysis, we define an explicit lower bound of the rate of decay:
\begin{align} \label{def:kappa_star_NL}
\kappa_{NL}^{\star} = - \frac{1}{T_{r} } \log\Big( \frac{ 4\| \partial_{v} f^{\infty} \|_{L^{1}(D^{+}_{r})} T_{r} }{ \lambda^2}  \Big)
\end{align}
and we do observe that if $ \frac{ 4\| \partial_{v} f^{\infty} \|_{L^{1}(D^{+}_{r})} T_{r} }{ \lambda^2}   < 1$ then $\kappa^{\star}_{NL} > 0$.

Our main result is the following.
\begin{theorem}[Non linear stability] \label{non_linear_stab_thm} Let $\phi_{b} < 0$. Let $r > r^{\star}(\phi_{b})$ and $\mu$ be as  Theorem \ref{theorem_equilibrium} with $\mu$ such that $\supp \mu \subset (r; +\infty)$. For any $\lambda > 0$, there are positive constants $\varepsilon_{\infty} \equiv \varepsilon_{\infty}(r,\lambda^2)  < \frac{\lambda^2}{4 T_{r}}$ and $ \varepsilon_{0} \equiv \varepsilon(r,\lambda^2, \varepsilon_{\infty})$
such that if 
\begin{align} \label{smallness_mu_prime}
\| \mu ' \|_{L^{1}(r,+\infty)} < \varepsilon_{\infty}
\end{align}
and $h_{0} \in \mathscr{A}$ satisfies
\[
\supp h_{0} \subset D^{+}_{r}, \quad  \| h_{0} \|_{L^{1}(Q)} < \varepsilon_{0},
\]
then the stationary solution $(f^{\infty},\phi^{\infty})$ given by Theorem  \ref{theorem_equilibrium} is such that
\begin{align}
\| \partial_{v} f^{\infty} \|_{L^{1}(D^{+}_{r})} < \varepsilon_{\infty} \label{smallness_nl_cond_1},\\
\supp \partial_{v} f^{\infty} \subset D^{+}_{r}.
\end{align}
Moreover, any global mild-strong solution $(h,U) \in \mathscr{C}(\RR^{+};L^{1}(Q)) \times \mathscr{C}(\RR^{+};W^{2,\infty}(0,1))$ to \textbf{(PVP)} in the sense of Definition \eqref{mild_strong_non_linear_sol} associated with the initial data $h_{0}$ and the equilibrium $(f^{\infty},\phi^{\infty})$ satisfies
\begin{itemize}
\item[a)] \emph{Propagation of the support.} For every $t \geq 0$
\begin{align}
\supp h(t) \subset D^{+}_{\frac{r}{2}}.
\end{align}
\item[b)] \emph{Exponential Decay.} There are constants $\kappa > \kappa^{\star}_{NL} > 0$ and $C \geq 0$ such that for every $t \geq 0$ 
  \begin{align} 
& \| h(t) \|_{L^{1}\Big( D^{+}_{\frac{r}{2} }\Big) }\leq C \exp(- \kappa t),\label{Graal_1}\\
 &\| \partial_{x} U(t) \|_{L^{\infty}(0,1)} \leq \frac{2 C }{\lambda^2}  \exp(-\kappa t). \label{Graal_2}
 \end{align}
 \end{itemize}
\end{theorem}

Several comments are in order about this result
\begin{remark}
\begin{itemize}
\item The inequality \eqref{smallness_mu_prime} and \eqref{smallness_nl_cond_1} are equivalent. The constant $\varepsilon_{\infty}$ is chosen in such a way that it also implies the natural condition (regarding the linear analysis)
\[
\frac{4\| \partial_{v}f^{\infty} \|_{ L^{1}(D^{+}_{r})} T_{r}}{\lambda^2 } < 1.
\]
We recall that such an inequality will be needed for the first order delayed integral equation of the type \eqref{fafa} to have exponentially decaying solutions.
As such, the constant $\varepsilon_{\infty}$ is not explicit but it could be estimated at least numerically by solving the inequality \eqref{mickey}. Once this constant is fixed, we can estimate $\varepsilon_{0}$ numerically by solving the inequality \eqref{sufficient_condition_h0}.

\item We propagate the support of the initial data and the equilibrium up to the domain $D^{+}_{\frac{r}{2}} \supset D^{+}_{r}$. The supports were exactly preserved in the linear case thanks to the energy conservation \eqref{first_integral}. In the non linear case, we prove a stability result of the stationary phase portrait in Lemma \ref{stability_micro_energy}. This result is a key ingredient in the proof, both for the stability of the phase-portrait and for the propagation of the exit geometric conditions for the perturbed characteristics.
\end{itemize}
\end{remark}
To prove this  theorem, we shall again rely on the concept of mild-strong solutions to \textbf{(PVP)}, we therefore need to define the characteristics associated with a generic time dependent potential.
\subsection{The generic characteristics}
Let be $\phi \in \mathscr{C}\big( \RR^{+} ; W^{2,\infty}(0,1) \big)$ a generic potential. For every $s  \in \RR^{+}$, we consider and extension of $\phi(s,\cdot)$ to $\RR$ still denoted $\phi(s, \cdot) $ such that:
\begin{align}
\forall s \in \RR^{+}, \quad \phi(s,\cdot) \in W^{2,\infty}(\RR), \quad \| \partial_{x} \phi(s) \|_{L^{\infty}(\RR)} \leq  \| \partial_{x} \phi(s) \|_{L^{\infty}(0,1)}. \label{extension_2}
\end{align}
Such an extension is easily constructed in one dimension by affine extrapolation to $\RR \setminus [0,1]$ of the boundary value of $\phi(s,\cdot)$.

For $(t,x,v) \in \RR^{+} \times \RR^{2}$ we define the characteristics which passes through $(x,v)$ at time $t$ as the solution to the system of differential equations
\begin{equation} \label{general-char}
\begin{cases}
\frac{\dd }{\dd s}X_{\phi}(s; t,x,v) = V_{\phi}(s; t,x,v),\\
\frac{\dd }{\dd s}V_{\phi} (s; t,x,v) =  -\partial_{x} \phi \Big( s, X_{\phi}(s;t,x,v) \Big).\\
X_{\phi}(t;t,x,v) = x, V_{\phi}(t;t,x,v) = v.
\end{cases}
\end{equation}
Invoking the Cauchy-Lipschitz theorem,  there exists a unique solution 
\[
s \in \RR^{+} \longmapsto  (X_{\phi}(s;t,x,v),V_{\phi}(s;t,x,v)) \in \mathscr{C}^{1}(\RR^{+}) \cap W^{2,\infty}_{\textnormal{loc}}(\RR^{+}).
\]
When $(t,x,v) \in \RR^{+} \times Q$, we define the incoming time in $Q$ and the outgoing times of $Q$ by
\begin{align}
t^{\textnormal{inc}}_{\phi}(t,x,v) = \inf \lbrace s \in (-\infty,t) \cap \RR^{+}  \: : X_{\phi}(s';t,x,v) \in (0,1) \:  \forall s' \in (s,t) \rbrace, \label{inc_time_phi}\\
t^{\textnormal{out}}_{\phi}(t,x,v) = \sup \lbrace s \in (t,+\infty) \: : X_{\phi}(s';t,x,v) \in (0,1) \:  \forall s' \in (t,s) \rbrace. \label{out_time_phi}
\end{align}
The interval $(t^{\textnormal{inc}}_{\phi}(t,x,v), t^{\textnormal{out}}_{\phi}(t,x,v))$ is the largest open interval contained in $\RR^{+}$ on which the characteristic which started at time $t$ from the point $(x,v)$ is in $Q.$ By definition,
\begin{align}
t^{\textnormal{inc}}_{\phi}(t,x,v) > 0 \Longrightarrow X_{\phi}^{ \textnormal{inc} }(t,x,v)   \in \lbrace 0,1 \rbrace, \label{incoming_position}\\
t^{\textnormal{out}}_{\phi}(t,x,v) < +\infty \Longrightarrow X_{\phi}^{ \textnormal{out} }(t,x,v)   \in \lbrace 0,1 \rbrace.
\end{align}
We also define for notational convenience,
\begin{align}
&X_{\phi}^{ \textnormal{inc} }(t,x,v) = X_{\phi}\Big( t^{\textnormal{inc}}_{\phi}(t,x,v) ; t, x , v \Big),\\
&V_{\phi}^{\textnormal{inc}}(t,x,v) = V_{\phi}\Big( t^{\textnormal{inc}}_{\phi}(t,x,v) ;t,x,v \Big),
\end{align}
and  when $t^{\textnormal{out}}_{\phi}(t,x,v) < +\infty,$
\begin{align}
&X_{\phi}^{ \textnormal{out} }(t,x,v) = X_{\phi}\Big( t^{\textnormal{out}}_{\phi}(t,x,v) ; t, x , v \Big),\\
&V_{\phi}^{\textnormal{out}}(t,x,v) = V_{\phi}\Big( t^{\textnormal{out}}_{\phi}(t,x,v) ;t,x,v \Big).
\end{align}
We also denote for $(s,t) \in \RR^{+} \times \RR^{+}$ the associated flow by
\begin{align}
\mathcal{F}_{s,t}^{\phi} : (x,v) \in \RR^{2}  \longmapsto \Big (X_{\phi}(s;t,x,v), V_{\phi}(s;t,x,v) \Big). \label{flow_gen}
\end{align}
It is a measure preserving diffeomorphism.

We now define the notion of solutions we consider for \textbf{(PVP)}. 
\begin{definition}[Global mild-strong solution to \textbf{(PVP)}]  \label{mild_strong_non_linear_sol} Let $h_{0} \in \mathscr{A} $. We say that $(h,U)$ is a global mild-strong solution to \textbf{(PVP)} if
\begin{itemize}
\item[a)] $(h,U) \in \mathscr{C}(\RR^{+}; L^{1}(Q)) \times \mathscr{C}(\RR^{+}; W^{2,\infty}(0,1)).$
\item[b)] $h$ is a mild solution of the non linear Vlasov equation, in the sense that is satisfies for every $t \geq 0$ and a.e $(x,v) \in Q$,
\begin{align}
&h(t,x,v) = \mathbf{1}_{t^{\textnormal{inc}}_{\phi}(t,x,v) = 0 } h_{0} \Big( (X_{\phi},V_{\phi})(0;t,x,v) \Big) \label{def_h_nonlinear}  \\
    &+  \int_{0}^{t} \mathbf{1}_{t_{\phi}^{\textnormal{inc}}(t,x,v) < s} \partial_{x} U \Big(s, X_{\phi}(s;t,x,v) \Big)  \partial_{v} f^{\infty} \Big(   (X_{\phi}, V_{\phi})(s;t,x,v) \Big)  \dd s,\nonumber  
\end{align}
where $\phi = \phi^{\infty} + U$ where $\phi^{\infty}$ is the stationary solution given by theorem \ref{theorem_equilibrium} and $(X_{\phi},V_{\phi})$ are the characteristics defined in \eqref{general-char}.
\item[c)]  For every $t \geq 0$, $U(t)$ is a strong solution to the non linear Poisson equation
\begin{align}
-\lambda^2 \partial_{xx} U(t) +   n_{e}\big( \phi^{\infty}+U(t) \big) - n_{e}\big(\phi^{\infty} \big) = \int_{\RR} h(t,\cdot, v) \dd v,  \label{non_linear_Poisson}
\end{align}
and $U(t)(0) = U(t)(1) = 0.$
\end{itemize}
\end{definition}
As for the linear analysis, we define two types of exit geometric conditions which depends on a generic time dependent potential.
\begin{definition}\label{egcc_1} Let $\phi \in \mathscr{C}(\RR^{+}; W^{2,\infty}(0,1))$. Let $K \subset Q$ and $J$ a subinterval of $\RR^{+}$. We say that the electric field $-\partial_{x} \phi$  satisfies the internal exit geometric condition in time $T > 0 $ with respect to $K$ on $J$ if
\begin{align}
&\underset{(t,x,v) \in J \times K } \sup \big( t_{\phi}^{\textnormal{out}}(t,x,v)-t  \big) \leq T,\\
& \forall (t,x,v) \in J \times K, \quad (X_{\phi}^{\textnormal{out}},V_{\phi}^{\textnormal{out}})(t,x,v) \in \Sigma^{\textnormal{out}}
\end{align}
\end{definition}

\begin{definition}\label{egcc_2}  Let $\phi \in \mathscr{C}(\RR^{+}; W^{2,\infty}(0,1))$. Let $K \subset Q$. We say that the electric field $-\partial_{x} \phi$ satisfies the initial exit geometric condition in time $T > 0$ with respect to $K$ if
\begin{align}
 &\underset{(x,v) \in  K } \sup t_{\phi}^{\textnormal{out}}(0,x,v)  \leq T,\\
 & \forall (x,v) \in K, \quad  (X_{\phi}^{\textnormal{out}},V_{\phi}^{\textnormal{out}})(0,x,v) \in \Sigma^{\textnormal{out}}.
\end{align}
\end{definition}
\subsection{The non linear elliptic estimates}\label{sec:nl_ell_est}
In this section we prove the  wellposedness and give elliptic estimates for the non linear Poisson equation
\begin{equation} \label{nlp}
\begin{cases}
-\lambda^2 \partial_{xx} W + n_{e}(\phi^{\infty} + W) - n_{e}(\phi^{\infty}) = \rho, \textnormal{ a.e in } (0,1),\\
W(0) = 0, \quad W(1) = 0,
\end{cases}
\end{equation}
where $\rho \in L^{1}(0,1)$ is a given source term.
We have the following.
\begin{proposition} Let $\rho \in L^{1}(0,1)$. Then the non linear Poisson problem \eqref{nlp} admits a unique strong solution $W \in W^{2,1}(0,1)$. In addition, the solution satisfies the estimates
\begin{align}
&\lambda^2 \| \partial_{x} W \|_{L^2(0,1)}  \leq \| \rho  \|_{L^{1}(0,1)} , \label{papa}\\
& \| n_{e}( \phi^{\infty} + W) - n_{e}(\phi^{\infty}) \|_{L^{1}(0,1)} \leq \| \rho \|_{L^{1}(0,1)}, \label{pepe} \\
&\lambda^2 \| \partial_{xx} W \|_{L^{1}(0,1)} \leq 2 \| \rho \|_{L^{1}(0,1)}, \label{pupu}\\
& \lambda^2 \| \partial_{x} W \|_{L^{\infty}(0,1)} \leq 2 \| \rho \|_{L^{1}(0,1)}. \label{pipi}
\end{align}
If $\rho \in L^{\infty}(0,1)$ then $W \in W^{2,\infty}(0,1)$ and we have the estimates
\begin{align}
& \| n_{e}( \phi^{\infty} + W) - n_{e}(\phi^{\infty}) \|_{L^{1}(0,1)} \leq \| \rho \|_{L^{\infty}(0,1)}, \label{ppeppe}\\
&\lambda^2 \| \partial_{x} W \|_{L^{\infty}(0,1)} \leq 2 \| \rho \|_{L^{\infty}(0,1)}, \label{popo}\\
& \| n_{e}( \phi^{\infty} + W) - n_{e}(\phi^{\infty}) \|_{L^{\infty}(0,1)} \leq \frac{2 M }{\lambda^2} \| \rho \|_{L^{\infty}(0,1)}, \label{uuu}\\
&\lambda^2 \| \partial_{xx} W \|_{L^{\infty}(0,1)} \leq  \left( \frac{2 M }{\lambda^2}+1 \right) \| \rho \|_{L^{\infty}(0,1)}. \label{ppp}
\end{align}
where $M$ is a positive constant that depends only on $\lambda^2$, $\phi_{\infty}$ and $\rho.$
\begin{proof} \emph{ Step 1: Existence and uniqueness}. The proof follows from a classical variational argument.
Let $ E : H^{1}_{0}(0,1) \longrightarrow \RR$ be the functional given for all $\psi \in H^{1}_{0}(0,1)$ by
\[
E(\psi) = \int_{0}^{1} \frac{\lambda^2}{2} \left \vert \partial_{x} \psi(x) \right \vert^2  + N_{e}(\phi^{\infty}(x) + \psi(x)) - ( n_{e}(\phi^{\infty}(x)) - \rho(x) ) \psi(x) \dd x.
\]
where the function $N_{e}$ is the function given for all $s \in \RR$ by  $N_{e}(s) = \displaystyle \int_{0}^{s} n_{e}(s') \dd s'$ and where we recall that $H^{1}_{0}(0,1)$ is equipped with the usual norm $\psi \in H^{1}_{0}(0,1) \longmapsto \| \psi \|_{H^{1}_{0}(0,1)} =\big(  \int_{0}^{1} \left \vert \partial_{x} \psi(x) \right \vert^2 \dd x \big)^{\frac{1}{2}}.$
The functional $E$ is well defined and of class $\mathscr{C}^{1}$ on $H^{1}_{0}$ because the space $H^{1}_{0}$ is continuously imbedded in $L^{\infty}(0,1)$ and $n_{e} \in \mathscr{C}^{1}(\RR)$. In addition note that since the function $n_{e}' $ is positive on $\RR$, the function $N_{e}$  is strictly convex and thus the functional $\psi \in H^{1}_{0}(0,1) \longmapsto \int_{0}^{1} N_{e}(\phi^{\infty}(x) + \psi(x)) - (n_{e}(\phi^{\infty}(x)) + \rho(x)) \psi(x) \dd x $ is strictly convex. Therefore $E$ is a strictly convex functional as the sum of two strictly convex functionals.  We then deduce that $W$ is a weak solution to \eqref{nlp} if and only if $W$ minimizes $E$ on $H^{1}_{0}(0,1)$. Let us therefore prove the existence of the minimizer. Using a Hölder inequality   we have for all $\psi \in H^{1}_{0}(0,1),$
\begin{align*}
E(\psi) \geq  \frac{\lambda^{2}}{2} \| \psi \|_{H^{1}_{0}(0,1)}^2 + \int_{0}^{1} N_{e}(\phi^{\infty}(x) + \psi(x)) \dd x   - \| n_{e} \circ \phi^{\infty} - \rho \|_{L^{1}(0,1)} \| \psi \|_{L^{\infty}(0,1)}.
\end{align*}
Since $N_{e}$ is a positive function and using the fact that for all $\psi \in H^{1}_{0}(0,1)$ we have $ \| \psi \|_{L^{\infty}(0,1)} \leq \| \psi \|_{H^{1}_{0}(0,1)}$ we get 
\[
E(\psi) \geq \frac{\lambda^{2}}{2} \| \psi \|_{H^{1}_{0}(0,1)}^2   - \| n_{e} \circ \phi^{\infty} + \rho \|_{L^{1}(0,1)} \| \psi \|_{H^{1}_{0}(0,1)}.
\]
Let $0 < \varepsilon < \lambda$.  Using an $\varepsilon$ dependent Young inequality for the second term we obtain
\begin{equation}
E(\psi) \geq  \frac{\lambda^2 - \varepsilon^2}{2} \| \psi \|_{H^{1}_{0}(0,1)}^{2}  - \frac{ \| n_{e} \circ \phi^{\infty} - \rho \|_{L^{1}(0,1)}^2}{2 \varepsilon^2}. \label{coercivity_estimate}
\end{equation}
It is a coercivity estimate for the functional $E$. By Mazur's Lemma,  since  $E$ is convex and continuous in $H^{1}_{0}(0,1)$, it is lower semi-continuous for the weak topology of $H^{1}_{0}(0,1).$ Consequently, the coercivity of the functional $E$ with its lower semi continuity for the weak topology of $H^{1}_{0}(0,1)$ yields the existence of a minimizer in $H^{1}_{0}(0,1)$. Since $E$ is striclty convex the minimizer is unique in $H^{1}_{0}(0,1)$. Let therefore $W = \underset{ \psi \in H^{1}_{0}(0,1)} {\textnormal{ argmin} }E(\psi)$. Since $E$ is $\mathscr{C}^{1}$ on $H^{1}_{0}(0,1)$, the Fréchet differential of $E$ evaluated at $W$ vanishes, it yields for all $\psi \in H_{0}^{1}(0,1),$
\begin{align}
\int_{0}^{1} \lambda^2 \partial_{x} W \partial_{x} \psi  + (n_{e}(\phi^{\infty} + W) - n_{e}(\phi^{\infty})) \psi \:  \dd x = \int_{0}^{1} \rho \psi \dd x. \label{weak_sol_V}
\end{align}
Since $n_{e}(\phi^{\infty} + W) - n_{e}(\phi^{\infty}) - \rho \in L^{1}(0,1)$, it is a consequence of the weak formulation \eqref{weak_sol_V} that $W \in W^{2,1}(0,1)$ and that $W$ is a strong solution to \eqref{nlp}. Arguing similarly,  if $\rho \in L^{\infty}(0,1)$ then $W \in W^{2,\infty}(0,1).$

\emph{Step 2: Estimates.}
We prove the first estimate \eqref{papa}. Since $W^{2,1}(0,1) \subset H^{1}(0,1)$,  we take $\psi = W$ in the weak formulation \eqref{weak_sol_V}. Then we obtain since $n_{e}$ is increasing that
$\displaystyle \int_{0}^{1}  (n_{e}(\phi^{\infty} + W) - n_{e}(\phi^{\infty})) W \dd x \geq 0$ and therefore

\[
\int_{0}^{1} \lambda^2 \vert \partial_{x} W  \vert ^2  \dd x \leq \int_{0}^{1} \rho W  \dd x \leq \| \rho \|_{L^{1}(0,1)} \| W \|_{L^{\infty}(0,1)}  \leq  \| \rho \|_{L^{1}(0,1)} \| \partial_{x} W \|_{L^{2}(0,1)}
\]
where we have used the Hölder inequality and the fact $\| W \|_{L^{\infty}(0,1)} \leq \| \partial_{x} W \|_{L^{2}(0,1)}.$ It yields the expected estimate.
We now prove the second estimate which follows the same lines as in Proposition \ref{estimate_Linf_E}. So we repeat the same argument.  Let $(\varphi_{n})_{n \in \NN} \subset \mathscr{C}^{1}(\RR)$ a sequence of functions such that for all $n \in \NN:$
\begin{align*}
    \varphi_n(0) = 0,\\
    \forall  u \in \RR, \varphi_{n}'(u) > 0, \:  |\varphi_n(u)| \leq 1\\
    \forall u \neq 0, \: \varphi_{n}(u) \longrightarrow \textnormal{sgn}(u) \textnormal{ as } n \rightarrow +\infty.
\end{align*}
Since $W \in W^{2,1}(0,1)$, for each $n \in \NN$, the function $\varphi_{n} \circ W$ belongs to $H^{1}(0,1)$ and it verifies $\varphi_{n} \circ W(0) = \varphi_{n} \circ W(1) = 0.$ We then use $\varphi_{n} \circ W$ as a test function in \eqref{weak_sol_V}. We thus get
\begin{align*}
\int_{0}^{1} \lambda^2 \left \vert  \partial_{x} W \right  \vert^2  \varphi_{n}'(W) \dd x + \int_{0}^{1} (n_{e}(\phi^{\infty} + W) - n_{e}(\phi^{\infty})) \varphi_{n} \circ W \dd x = \int_{0}^{1} \rho \varphi_{n} \circ W \dd x.
\end{align*}
Note that the first integral is non negative therefore we get
\[
\int_{0}^{1} (n_{e}(\phi^{\infty} + W) - n_{e}(\phi^{\infty})) \varphi_{n} \circ W \dd x \leq \int_{0}^{1} \rho \varphi_{n} \circ W \dd x \leq \| \rho \|_{L^{1}(0,1)} 
\]
where the last inequality was obtained using the Hölder inequality combined with the $L^{\infty}$ bound $\| \varphi_{n} \circ W \|_{L^{\infty}} \leq 1.$
Using the Lebesgue Dominated convergence theorem we have, because $n_{e}$ is increasing, 
\[
\int_{0}^{1} (n_{e}(\phi^{\infty} + W) - n_{e}(\phi^{\infty})) \varphi_{n} \circ W  \dd x  \underset{ n \rightarrow +\infty}{\longrightarrow} \int_{0}^{1} \big \vert  (n_{e}(\phi^{\infty} + W) - n_{e}(\phi^{\infty})) \big \vert \dd x.
\]
It yields the second estimate.
The third estimate is obtained using the strong form of the equation \eqref{nlp} together with the estimate \eqref{pepe}. As for the last estimate, since $\partial_{x} W \in W^{1,1}(0,1) \subset C^{0}[0,1]$ and $\int_{0}^{1} \partial_{x} W(x') \dd x' = 0$ because of the Dirichlet boundary condition, we have that there exists $x_{0} \in [0,1]$ such that for all $x \in [0,1]$, $\partial_{x} W(x) = \int_{x_{0}}^{x} \partial_{xx} W(x') \dd x'.$ Then $ \vert \partial_{x} W (w) \vert \leq \| \partial_{xx} W \|_{L^{1}(0,1)}$ and using the estimate \eqref{pupu} we conclude.

We now prove the estimates in the case when $\rho \in L^{\infty}(0,1)$. Note that \eqref{ppeppe} is obtained exactly as previously at the difference that we use rather the following bound for every $n \in \NN,$
\[
\int_{0}^{1} \rho \varphi_{n} \circ W \dd x \leq \| \rho \|_{L^{\infty}(0,1)} \int_{0}^{1} \left \vert  \varphi_{n} \circ W  \right \vert \dd x \leq \| \rho \|_{L^{\infty}(0,1)}
\]
since $| \varphi_{n} | \leq 1.$
From \eqref{ppeppe} and the strong from of the equation \eqref{nlp} we infer $\lambda^2 \| \partial_{xx} W \|_{L^{1}(0,1)}\leq 2 \| \rho \|_{L^{\infty}(0,1)}$ where we have used the continuous imbedding $L^{\infty}(0,1) \hookrightarrow L^{1}(0,1).$ Then,  we deduce because $\int_{0}^{1} \partial W (x) \dd x = 0$ that 
$\| \partial_{x} W \|_{L^{\infty}(0,1)} \leq \| \partial_{xx} W \|_{L^{1}(0,1)} \leq \frac{2}{\lambda^2} \| \rho \|_{L^{\infty}(0,1)}$ which is \eqref{popo}.
We now prove \eqref{uuu}. Since $W(0) = 0$ we have for all $x \in [0,1]$, $\vert W(x) \vert \leq \vert \int_{0}^{x}\partial_{x} W(x') \dd x' \vert \leq \| \partial_{x} W \|_{L^{\infty}(0,1)} \leq \frac{2}{\lambda^2} \| \rho \|_{L^{\infty}(0,1)}.$ Therefore by the mean value theorem we have 
\[
\left \vert n_{e}\big( \phi^{\infty}(x) + W(x) \big) - n_{e}( \phi^{\infty}(x) )  \right \vert \leq M \vert W(x) \vert \leq \frac{2 M}{\lambda^2} \| \rho \|_{L^{\infty}(0,1)},
\]
where $M = \underset{ s \in [-a,a] }{\sup}n_{e}'(s) $ where $a = \| \phi^{\infty} \|_{L^{\infty}(0,1)} + \frac{2}{\lambda^2} \| \rho \|_{L^{\infty}(0,1)}.$ It proves \eqref{uuu}. Lastly, the estimate \eqref{ppp} is obtained from the strong form of the equation \eqref{nlp} using  \eqref{uuu}.
\end{proof}
\end{proposition}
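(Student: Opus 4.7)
The plan is to prove existence and uniqueness by a direct variational method, then to deduce the estimates by testing the weak formulation against well-chosen admissible functions. Because $n_e$ is strictly increasing (by \eqref{hyp_ne_1}), its primitive $N_e(s)=\int_0^s n_e(s')\,\dd s'$ is strictly convex, and I introduce the functional
\[
E(\psi) := \int_0^1 \frac{\lambda^2}{2}\lvert \partial_x\psi\rvert^2 + N_e(\phi^\infty + \psi) - \bigl(n_e(\phi^\infty) + \rho\bigr)\psi\, \dd x
\]
on $H^1_0(0,1)$. This functional is strictly convex, continuous, and, via the continuous embedding $H^1_0(0,1)\hookrightarrow L^\infty(0,1)$ combined with an $\varepsilon$-Young inequality, coercive. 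The direct method of the calculus of variations then yields a unique minimizer $W$ whose Euler--Lagrange equation is exactly the weak form of \eqref{nlp}. A posteriori regularity $W\in W^{2,1}(0,1)$ (respectively $W^{2,\infty}(0,1)$ if $\rho\in L^\infty$) follows by isolating $-\lambda^2\partial_{xx} W$ in the equation.

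The $L^1$ estimates then come from testing the weak form against two well-chosen functions. Estimate \eqref{papa} is obtained by using $W$ itself as a test function: monotonicity of $n_e$ makes $\int_0^1(n_e(\phi^\infty+W)-n_e(\phi^\infty))W\,\dd x \ge 0$, so one concludes using the Sobolev inequality $\|W\|_{L^\infty}\le \|\partial_x W\|_{L^2}$ and H\"older. For the $L^1$ estimate \eqref{pepe} on $n_e(\phi^\infty+W)-n_e(\phi^\infty)$ (whose sign equals that of $W$ by monotonicity), I would use a sign-regularisation device: pick $\mathscr{C}^1$ approximations $\varphi_n$ of $\mathrm{sgn}$ with $\varphi_n(0)=0$, $|\varphi_n|\le 1$ and $\varphi_n'\ge 0$, test the weak form against $\varphi_n\circ W \in H^1_0(0,1)$, drop the non negative quadratic term $\int_0^1 \lambda^2|\partial_x W|^2\,\varphi_n'(W)\,\dd x$, bound the right-hand side by $\|\rho\|_{L^1}$, and pass to the limit by dominated convergence. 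Estimate \eqref{pupu} on $\|\partial_{xx}W\|_{L^1}$ then follows immediately by isolating $-\lambda^2\partial_{xx} W$, and \eqref{pipi} uses the Dirichlet condition: $\int_0^1 \partial_x W\,\dd x = 0$ gives an $x_0\in(0,1)$ where $\partial_x W(x_0)=0$ by the mean value theorem, so $\|\partial_x W\|_{L^\infty}\le \|\partial_{xx}W\|_{L^1}$.

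The $L^\infty$ refinements proceed along the same lines. The sign-regularisation trick, now bounding $\int_0^1 \rho\,\varphi_n\circ W\,\dd x$ by $\|\rho\|_{L^\infty}$, yields \eqref{ppeppe}; then \eqref{popo} mirrors the previous argument applied to the strong form. For \eqref{uuu}, one first gets an $L^\infty$ bound on $W$ itself via $\|W\|_{L^\infty}\le \|\partial_x W\|_{L^\infty}$ (using $W(0)=0$), which allows a pointwise mean value bound $|n_e(\phi^\infty+W)-n_e(\phi^\infty)|\le M|W|$ with $M$ the supremum of $n_e'$ on a compact interval depending only on $\lambda$, $\phi^\infty$ and $\|\rho\|_{L^\infty}$. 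Estimate \eqref{ppp} is then immediate from the strong form.

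The main difficulty I expect is the $L^1$ estimate \eqref{pepe} on the nonlinear contribution, since duality against an $L^\infty$ test function would morally amount to multiplying the equation by $\mathrm{sgn}(W)$, which is not in $H^1_0$. The sign-regularisation trick described above is the cleanest way to sidestep this: it exploits the fact that $\varphi_n'\ge 0$ so that the quadratic gradient term has the right sign and can be discarded, transferring the $L^1$ norm of $\rho$ directly onto the nonlinear term. Once \eqref{pepe} is in hand, all the remaining estimates reduce to straightforward manipulations of the strong form together with the Dirichlet boundary condition.
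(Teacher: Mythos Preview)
Your proposal is correct and follows essentially the same approach as the paper: the same variational functional $E$ for existence and uniqueness, the same test $\psi=W$ for \eqref{papa}, the same sign-regularisation $\varphi_n\circ W$ for \eqref{pepe} and \eqref{ppeppe}, and the same mean-value/vanishing-point argument for \eqref{pipi} and \eqref{popo}. The remaining estimates are, as you note, read off from the strong form exactly as in the paper.
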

\subsection{Study of the characteristics}\label{sec:gen_characteristics_study}
In this section, we study the properties of the generic characteristics. In particular, we are interested in stability properties of the stationary characteristics with respect to small perturbation the equilibrium electric field. As for the linear case, we state a regularity result of the incoming time of a characteristic with respect to its starting point.  For fixed $t > 0$ and $ 0 < s < t $ we define the sets
\begin{align}
\mathcal{D}_{t} := \lbrace (x,v) \in Q \: : \: t_{\phi}^{\textnormal{inc}}(t,x,v) = 0 \rbrace, \label{D_t}\\
\mathcal{E}_{s} := \lbrace (x,v) \in Q \: : \: t_{\phi}^{\textnormal{inc}}(t,x,v) < s \rbrace. \label{E_s}
\end{align}
The set $\mathcal{D}_{t}$ is the set of points in $Q$ which are on characteristics that stay in $Q$ on interval the $(0,t]$.
The set $\mathcal{E}_{s}$ is the set of points in $Q$ which are on characteristics that reach  $\Sigma^{\textnormal{inc}} \cup \Sigma^{0}$ at a time smaller than $s$.
As for the linear case, we may justify the measurability of these sets.
We have the following.
\begin{lemma}[Regularity of the incoming time and measurability]\newline
Let $\phi \in \mathscr{C} \Big( \RR^{+} ; W^{2,\infty}(0,1) \Big)$ and fix $t > 0$.  We have
\begin{itemize}
\item[a)]
The three functions $(x,v) \in Q \mapsto t^{\textnormal{inc}}_{\phi}(t,x,v)$, $(x,v) \in Q \mapsto X^{\textnormal{inc}}_{\phi}(t,x,v)$, $(x,v) \in Q \mapsto V^{\textnormal{inc}}_{\phi}(t,x,v)$ are continuous at every point $(x,v) \in Q$ such that \newline
$ t^{\textnormal{inc}}_{\phi}(t,x,v) > 0$ and $(X^{\textnormal{inc}}_{\phi}(t,x,v), V^{\textnormal{inc}}_{\phi}(t,x,v)) \notin \Sigma^{0}.$ 
\item[b)]
The sets $\mathcal{D}_{t}$ and $\mathcal{E}_{s}$  for every $s \in (0,t)$ given in \eqref{D_t}, \eqref{E_s} are Borel sets.
\end{itemize}
\begin{proof}  The proof of a) is in every point similar to that of  the point a) of Lemma \ref{regularity_lemma}. For b), we have $\mathcal{D}_{t} := Q \setminus \lbrace (x,v) \in Q \: : \: t^{\textnormal{inc}}_{\phi}(t,x,v) > 0 \rbrace.$ Therefore, by stability of the Borel sigma algebra, it is sufficient to prove that $ \lbrace (x,v) \in Q \: : \: t^{\textnormal{inc}}_{\phi}(t,x,v) > 0 \rbrace$ is a Borel set. One has a decomposition
\begin{align*}
\lbrace (x,v) \in Q \: : \: t_{\phi}^{\textnormal{inc}}(t,x,v) > 0 \rbrace =  F_{t} \cup G_{t},\\
F_{t}  = \lbrace (x,v) \in Q : t^{\textnormal{inc}}_{\phi}(t,x,v) > 0, \:  (X^{\textnormal{inc}}_{\phi}(t,x,v), V^{\textnormal{inc}}_{\phi}(t,x,v)) \in \Sigma^{0} \rbrace,\\
G_{t} =  \lbrace (x,v) \in Q : t^{\textnormal{inc}}_{\phi}(t,x,v) > 0, \:  (X^{\textnormal{inc}}_{\phi}(t,x,v), V^{\textnormal{inc}}_{\phi}(t,x,v)) \notin \Sigma^{0} \rbrace.
\end{align*}
One may adapt the proof of Proposition 2.3 in \cite{Bardos} to prove that $F_{t}$ is a Borel set of measure zero. $G_{t}$ is an open set as a consequence of the continuity of $t^{\textnormal{inc}}_{\phi}(t,\cdot,\cdot)$ outside the points that reach transversally the boundary. The same argument of continuity yields the claim for the set $\mathcal{E}_{s}$. 
\end{proof}
\end{lemma}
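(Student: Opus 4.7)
\emph{Part (a): continuity.} The plan is to adapt, almost verbatim, the three-step argument used in the stationary case (Lemma 3.3 in the cited reference, and the preceding Lemma in this paper). The first step is to write, as in \eqref{decomp_id},
\[
X^{\textnormal{inc}}_{\phi}(t,\cdot,\cdot) = X_{\phi,t}\circ S_{t}^{\phi}, \quad V^{\textnormal{inc}}_{\phi}(t,\cdot,\cdot) = V_{\phi,t}\circ S_{t}^{\phi},
\]
with $S_{t}^{\phi}(x,v) = (t_{\phi}^{\textnormal{inc}}(t,x,v),x,v)$ and $(X_{\phi,t},V_{\phi,t})(s,x,v)=(X_\phi,V_\phi)(s;t,x,v)$. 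Since $\phi\in\mathscr{C}(\RR^+;W^{2,\infty}(0,1))$ admits the extension \eqref{extension_2}, the field $\partial_{x}\phi(s,\cdot)$ is Lipschitz in $x$ uniformly for $s$ in any compact subinterval of $\RR^+$. Integrating \eqref{general-char} and applying Gr\"onwall's lemma yields, for $s$ in a bounded interval around $t$,
\[
\vert X_{\phi}(s;t,x,v)-X_{\phi}(s;t,x',v')\vert+\vert V_{\phi}(s;t,x,v)-V_{\phi}(s;t,x',v')\vert \leq \big(\vert x-x'\vert+\vert v-v'\vert\big) e^{C_{\phi,s,t}|t-s|},
\]
which gives the joint continuity of $(X_{\phi,t},V_{\phi,t})$ on any compact of $\RR^3$. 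It then suffices to prove continuity of $(x,v)\mapsto t_{\phi}^{\textnormal{inc}}(t,x,v)$ at a point $(x_0,v_0)$ satisfying both $t_{\phi}^{\textnormal{inc}}(t,x_0,v_0)>0$ and $(X^{\textnormal{inc}}_\phi,V^{\textnormal{inc}}_\phi)(t,x_0,v_0)\notin\Sigma^0$.

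\emph{Localization argument.} Fix such $(x_0,v_0)$. Without loss of generality assume $X^{\textnormal{inc}}_{\phi}(t,x_0,v_0)=0$ and $V^{\textnormal{inc}}_{\phi}(t,x_0,v_0)>0$ (by \eqref{incoming_position} and the transversality assumption). Since $X_{\phi}(\cdot;t,x_0,v_0)$ is $\mathscr{C}^1$, a Taylor expansion at $s=t_{\phi}^{\textnormal{inc}}(t,x_0,v_0)$ shows there exists $\varepsilon_{0}>0$ such that $X_{\phi}(s;t,x_0,v_0)<0$ for $s\in(t_{\phi}^{\textnormal{inc}}(t,x_0,v_0)-\varepsilon_{0},\,t_{\phi}^{\textnormal{inc}}(t,x_0,v_0))$, while $X_{\phi}(s;t,x_0,v_0)\in(0,1)$ for $s$ just above $t_{\phi}^{\textnormal{inc}}(t,x_0,v_0)$ (by definition of the incoming time). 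Applying the Gr\"onwall estimate at $s=t_{\phi}^{\textnormal{inc}}(t,x_0,v_0)\pm 2\varepsilon$ for small enough $\varepsilon$ and using the open/closed nature of the conditions $X_\phi<0$ and $X_\phi\in(0,1)$, we find a neighbourhood of $(x_0,v_0)$ on which $|t_{\phi}^{\textnormal{inc}}(t,x',v')-t_{\phi}^{\textnormal{inc}}(t,x_0,v_0)|<2\varepsilon$. This gives the continuity of $t_\phi^{\textnormal{inc}}(t,\cdot,\cdot)$ at $(x_0,v_0)$, and hence of $X^{\textnormal{inc}}_\phi$, $V^{\textnormal{inc}}_\phi$ by composition.

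\emph{Part (b): Borel measurability.} As the authors indicate, write
\[
\{(x,v)\in Q : t_{\phi}^{\textnormal{inc}}(t,x,v)>0\}=F_{t}\cup G_{t},
\]
with $G_{t}$ the set where the incoming point is in $\Sigma^{\textnormal{inc}}$ and $F_{t}$ the set where it lies in $\Sigma^0$. By part (a), $t_{\phi}^{\textnormal{inc}}(t,\cdot,\cdot)$ is continuous on $G_{t}$, so writing $G_{t}$ as the intersection of $\{t_{\phi}^{\textnormal{inc}}(t,\cdot,\cdot)>0\}\cap\{(X_\phi^{\textnormal{inc}},V_\phi^{\textnormal{inc}})\notin\Sigma^0\}$ yields an open set (points reaching the boundary transversally at positive time). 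For $F_{t}$, one adapts Proposition 2.3 of the Bardos reference: tangential exit points can be parametrized by the boundary point $(X^{\textnormal{inc}}_\phi,0)\in\Sigma^0$ and the exit time, yielding a countable union of smooth one-dimensional graphs, hence a Borel set of two-dimensional Lebesgue measure zero. Therefore $\mathcal{D}_{t}=Q\setminus(F_{t}\cup G_{t})$ is Borel. For $\mathcal{E}_{s}$ with $0<s<t$, the same dichotomy applies: characteristics with $t_\phi^{\textnormal{inc}}(t,x,v)<s$ either cross $\Sigma^{\textnormal{inc}}$ transversally (giving an open contribution via continuity of $t_\phi^{\textnormal{inc}}$) or meet $\Sigma^0$ (giving a Lebesgue-negligible Borel set by the same parametrization argument); the union is Borel.

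\emph{Main obstacle.} The only delicate step is the handling of the singular set $F_t$ of trajectories hitting $\Sigma^0$ at the incoming boundary: continuity fails there and one must genuinely adapt the Bardos-style parametrization, using that the non-autonomous flow $\mathcal{F}^\phi_{s,t}$ is a smooth diffeomorphism (so that preimages of the two-point set $\Sigma^0$ through one-parameter exit-time families form a Lebesgue-negligible Borel set). Everything else is a direct transcription of the arguments already carried out for the stationary flow, the only new ingredient being that $\partial_x\phi$ is merely jointly continuous in time and Lipschitz in space uniformly on compact time intervals, which is precisely what the Gr\"onwall estimate requires.
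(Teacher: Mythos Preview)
Your proposal is correct and follows essentially the same approach as the paper: part (a) is handled by transposing the stationary argument (Gr\"onwall estimate plus the two-sided localization in time), and part (b) uses the identical decomposition $F_t \cup G_t$ with $G_t$ open by the continuity established in (a) and $F_t$ handled by the Bardos-type parametrization. Your write-up is in fact more detailed than the paper's, which for (a) simply refers back to the stationary lemma and for (b) gives only the decomposition and the Bardos citation; one small presentational point is that your phrasing ``writing $G_t$ as the intersection of $\{t_\phi^{\textnormal{inc}}>0\}\cap\{\dots\}$'' reads as slightly circular, whereas the cleaner statement (which is what you mean and what the paper asserts) is simply that $G_t$ is open because at each of its points the transversality and positivity conditions propagate to a neighborhood by part (a).
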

We now come to a key lemma in the analysis. Unlike the linear analysis, the microscopic energy $(x,v) \in Q \mapsto \frac{v^2}{2} + \phi^{\infty}(x)$ is not conserved along the generic characteristics \eqref{general-char}. Since we shall ask the supports of the initial data $h_{0}$ and the equilibrium $f^{\infty}$ to be imbedded in $D^{+}_{r}.$  It is natural to track the evolution of the set  $D^{+}_{r}$  by the flow \eqref{flow_gen}. On that occasion, we recall that the equilibrium electric field $-\partial_{x} \phi^{\infty}$ verifies the exit geometric conditions  \eqref{egc1}, \eqref{egc2} at time $T_{r}= \frac{1}{r}$ with respect to $D^{+}_{r}$. We would like that the perturbed electric field $-\partial_{x} \phi$ does so, at least, at a larger time than $T_{r}$.  In that regard, we set for ease in the reading
\begin{align}
\tilde{T}_{r} = 2T_{r}\label{TT} = \frac{2}{r} \textnormal{ if } r > 0.
\end{align}
We will show that  under an appropriate smallness assumption, $-\partial_{x} \phi$ verifies the exit geometric conditions \eqref{egcc_1}-\eqref{egcc_2} in time $\tilde{T}_{r}.$ We have.

\begin{lemma}[Stability of the the microscopic energy and  exit geometric conditions]  \label{stability_micro_energy} Fix $r > 0.$ Let  $J$ a subinterval of $\RR^{+}$. For any $ U \in \mathscr{C}(\RR^{+}; W^{2,\infty}(0,1))$ such that
\begin{align}  \label{egc_cond}
\forall s \in J, \quad  \int_{s}^{s+\tilde{T}_{r} } \frac{\| \partial_{x} U(\tau) \|_{L^{\infty}(0,1)}^2}{2} \dd \tau + \tilde{T}_{r}  \| \phi^{\infty} \|_{L^{\infty}(0,1)}  <  r^{2}\Big( \exp(-\tilde{T}_{r}) - \frac{1}{8}\Big).
\end{align}
we have
\begin{itemize}
\item[a)] for all  $s \in J$ and $(x,v) \in D^{+}_{r}$, the characteristic \eqref{general-char} associated with the potential $\phi = U + \phi^{\infty}$ which started at time $s$ from $(x,v)$ verifies 
\begin{align}
\forall t \in [s,s+\tilde{T}_{r}], \quad V_{\phi}(t;s,x,v) > \frac{r}{2}. \label{stability_flow}
\end{align}
\item[b)]In addition, $\partial_{x} \phi$ verifies the internal exit geometric condition in time $\tilde{T}_{r}$ with respect to $D^{+}_{r}$ on $J$ and we have
\begin{align}
\forall t \in [s, t_{\phi}^{\textnormal{out}}(s,x,v)\big), \quad (X_{\phi},V_{\phi})(t;s,x,v) \in D^{+}_{\frac{r}{2}}.
\end{align}
Furthermore, if $ 0 \in J$ then $\partial_{x} \phi$ satisfies also the initial exit geometric condition  in time $\tilde{T}_{r}$ with respect to $D^{+}_{r}.$ 
\end{itemize}
\begin{proof} Fix $r > 0$, $J$ a subinterval of $\RR^{+}$ and consider $U \in \mathscr{C}(\RR^{+};W^{2,\infty}(0,1))$ which verifies \eqref{egc_cond}.
\emph{ Proof of a).} Let $(s,x,v) \in J \times D^{+}_{r}$. We have for all $t \in \RR^{+}$
\begin{align}
&\frac{\dd}{\dd s} X_{\phi}(t;s,x,v) = V_{\phi}(t;s,x,v) , \label{uno}\\
& \frac{\dd}{\dd s} V_{\phi}(t;s,x,v) = - \partial_{x} U(t;X_{\phi}(t;s,x,v)) - \partial_{x} \phi^{\infty}(X_{\phi}(t;s,x,v)). \label{dos}
\end{align}
We multiply \eqref{dos} by $V_{\phi}(t;s,x,v)$ to get,
\begin{align}
\frac{\dd}{\dd t} \Big( \frac{V_{\phi}^2(t;s,x,v)}{2} + \phi^{\infty}\Big(X_{\phi}(t;s,x,v) \Big) \Big) = - \partial_{x} U(t;X_{\phi}(t;s,x,v)) V_{\phi}(t;s,x,v).
\end{align}
For $t \geq s$, we get after an integration on $[s,t]$,
 \begin{align}
& \frac{V_{\phi}^2(t;s,x,v)}{2} + \phi^{\infty}\Big(X_{\phi}(t;s,x,v)\Big)\nonumber\\
 & = \frac{v^2}{2} + \phi^{\infty}(x) \textcolor{blue}{-} \int_{s}^{t} \partial_{x}U\big( t';X_{\phi}(t';s,x,v) \big) V_{\phi}(t';s,x,v) \dd t'. \label{energy_qcon}
 \end{align}
 Using the Young inequality, for $ab \geq - \frac{a^2 + b^2}{2}$ for all real numbers $a$ and $b$,  we obtain
  \begin{align}
 &\frac{V_{\phi}^2(t;s,x,v)}{2} + \phi^{\infty}\Big(X_{\phi}(t;s,x,v)\Big)\\
 & \geq \frac{v^2}{2} + \phi^{\infty}(x) - \int_{s}^{t} \frac{V_{\phi}^2(t';s,x,v)}{2} + \phi^{\infty}\Big(X_{\phi}(t';s,x,v)\Big)  \dd t' - B(t,s),
 \end{align}
 where $B(t,s) = \displaystyle  \int_{s}^{t} \frac{1}{2} \| \partial_{x} U(t') \|_{L^{\infty}(0,1)}^2 \dd t' + (t-s) \| \phi^{\infty}\|_{L^{\infty}(0,1)}.$ Since $(x,v) \in D^{+}_{r}$ we have $\frac{v^2}{2} + \phi^{\infty}(x) > r^2.$ Setting $I(t) = \frac{V_{\phi}^2(t;s,x,v)}{2} + \phi^{\infty}\Big(X_{\phi}(t;s,x,v)\Big) $ we thus get the integral inequality
 \begin{align}
 \forall t \geq s , \quad I(t) > r^2 - \int_{s}^{t} I(t') \dd t' -B(t,s).
 \end{align}
Since the function $t \longmapsto B(t,s)$ is increasing, using a Grönwall Lemma, we get

\begin{align}
\forall t \geq s, \quad I(t) >  r^2\exp(s-t) - B(t,s).
\end{align}
Remark that the function $t \in [s, s+\tilde{T}_{r}] \longmapsto r^2 \exp(s-t) - B(t,s)$ is decreasing, therefore we obtain
\begin{align}
\forall t \in [s,s+T_{r}], \quad I(t) >  r^2\exp(s-t) - B(t,s) \geq r^2 \exp(-\tilde{T}_{r}) - B(s+\tilde{T}_{r},s)
\end{align}
By assumption \eqref{egc_cond}, we have $r^2 \exp(-\tilde{T}_{r}) - B(s+\tilde{T}_{r},s) > \frac{r^2}{8}$. It yields
\begin{align}
\forall t \in [s,s+\tilde{T}_{r}], \quad \frac{V_{\phi}^2(t;s,x,v)}{2} + \phi^{\infty}\Big(X_{\phi}(t;s,x,v)\Big) > \frac{r^2}{8}. \label{energy_est}
\end{align}
Observe that $\phi^{\infty}$ is non positive on $[0,1]$.  Therefore, since $V_{\phi}(t;t,x,v) = v > 0$ we deduce thanks to \eqref{energy_est} and a standard continuity argument that $V_{\phi}(t;s,x,v) > 0$ for all $t \in [s,s+\tilde{T}_{r}]$. Thus for all $ t \in [s,s+\tilde{T}_{r}]$ we have $V_{\phi} (t;s,x,v) > \frac{r}{2}.$

\emph{ Proof of b).} From the previous point we have for all $t \in [s,s+\tilde{T}_{r}]$, $V_{\phi}(t;s,x,v) > \frac{r}{2}.$ By integration on $[s,s+\tilde{T}_{r}]$ we obtain $X_{\phi}(s+\tilde{T}_{r};s,x,v) > x + \frac{\tilde{T}_{r} \times r}{2} =   x+1 > 1$. By definition of the outgoing time we infer $t_{\phi}^{\textnormal{out}}(s,x,v) < s + \tilde{T}_{r}$. Since it holds for every $s \in J$ and $(x,v) \in D^{+}_{r}$, taking the supremum on $J \times D^{+}_{r}$ yields the internal exit geometric condition. Eventually, remark that  if $0 \in J$  then the previous arguments applied at $s = 0$ show that the initial exit geometric condition with respect to $D^{+}_{r}$ is also satisfied. Finally, remark that since $s < t_{\phi}^{\textnormal{out}}(s,x,v) < s + \tilde{T}_{r}$ we thus have for $t \in [s, t_{\phi}^{\textnormal{out}}(s,x,v)),$ $(X_{\phi},V_{\phi})(t;s,x,v) \in D^{+}_{\frac{r}{2}}.$
\end{proof}
\end{lemma}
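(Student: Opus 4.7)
The plan is to monitor, along a perturbed trajectory starting from $(s,x,v) \in J \times D^{+}_{r}$, the quantity
$$I(t) := \tfrac{1}{2}V_{\phi}^{2}(t;s,x,v) + \phi^{\infty}\big(X_{\phi}(t;s,x,v)\big),$$
i.e.\ the microscopic energy built from the stationary potential. Along the stationary flow this is conserved (cf.\ \eqref{first_integral}), but when $\phi = \phi^{\infty} + U$ the two terms involving $\partial_{x}\phi^{\infty}$ cancel and one is left with
$$I'(t) = -V_{\phi}(t;s,x,v)\,\partial_{x} U\big(t, X_{\phi}(t;s,x,v)\big).$$
Since $I(s) = v^{2}/2 + \phi^{\infty}(x) > r^{2}$ by definition of $D^{+}_{r}$, the goal is to show that $I$ cannot drop below $r^{2}/8$ on the whole interval $[s, s+\tilde{T}_{r}]$, from which both claims will follow.

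Applying Young's inequality $|V_{\phi}\,\partial_{x} U| \leq \tfrac{1}{2} V_{\phi}^{2} + \tfrac{1}{2}\|\partial_{x} U(t)\|_{L^{\infty}}^{2}$, together with the non-positivity of $\phi^{\infty}$, which gives $\tfrac{1}{2} V_{\phi}^{2} = I - \phi^{\infty}(X_{\phi}) \leq I + \|\phi^{\infty}\|_{L^{\infty}}$, I close the differential inequality
$$I'(t) + I(t) \;\geq\; -\tfrac{1}{2}\|\partial_{x} U(t)\|_{L^{\infty}}^{2} - \|\phi^{\infty}\|_{L^{\infty}}.$$
The usual integrating-factor manipulation then yields $I(t) \geq r^{2}e^{s-t} - B(t,s)$ for $t \geq s$, where $B(t,s)$ is exactly the increasing quantity appearing on the left of \eqref{egc_cond} (after dropping the harmless factor $e^{\tau-t} \leq 1$ in the forcing). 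Since the right-hand side is monotone decreasing in $t$, its minimum on $[s, s+\tilde{T}_{r}]$ equals $r^{2} e^{-\tilde{T}_{r}} - B(s+\tilde{T}_{r}, s)$, and assumption \eqref{egc_cond} is precisely the quantitative statement that this minimum exceeds $r^{2}/8$. Hence $I(t) > r^{2}/8$ throughout $[s, s+\tilde{T}_{r}]$, and because $\phi^{\infty} \leq 0$ this forces $V_{\phi}^{2}(t) > r^{2}/4$. Continuity of $V_{\phi}$ and the positive initial value $V_{\phi}(s) = v > r$ preclude any sign change, yielding \eqref{stability_flow}.

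For part (b), integrating $\dot X_{\phi} = V_{\phi} > r/2$ from $s$ to $s + \tilde{T}_{r}$ gives $X_{\phi}(s+\tilde{T}_{r}) > x + (r/2)\tilde{T}_{r} = x + 1 > 1$, so the trajectory must exit $(0,1)$ strictly before $s+\tilde{T}_{r}$. The exit necessarily occurs through $x=1$ with strictly positive velocity, placing $(X_{\phi}^{\textnormal{out}}, V_{\phi}^{\textnormal{out}})(s,x,v) \in \Sigma^{+}$. Taking the supremum over $(s,x,v) \in J \times D^{+}_{r}$ delivers the internal exit geometric condition; applying the same reasoning at $s=0$ (when $0 \in J$) provides the initial version. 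Finally, the containment $(X_{\phi}, V_{\phi})(t;s,x,v) \in D^{+}_{r/2}$ on $[s, t_{\phi}^{\textnormal{out}}(s,x,v))$ is a direct restatement of $I(t) > r^{2}/8$, which coincides with the defining inequality for $D^{+}_{r/2}$.

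The main obstacle is the Gr\"onwall step: one would like to lower-bound $I$ by a quantity that does not itself involve $V_{\phi}$, but the natural estimate on $I'$ is quadratic in $V_{\phi}$. The resolution is to re-absorb $V_{\phi}^{2}$ into $I$ via the non-positivity of $\phi^{\infty}$, at the cost of the extra term $\|\phi^{\infty}\|_{L^{\infty}}$ in $B(t,s)$; the smallness condition \eqref{egc_cond} is then engineered so that, after the exponential loss $e^{-\tilde{T}_{r}}$ coming from Gr\"onwall, a definite positive margin ($r^{2}/8$) remains for $I$. The numerical choice $\tilde{T}_{r} = 2T_{r}$ is finally dictated by the need $(r/2)\tilde{T}_{r} \geq 1$, so that the trajectory is guaranteed to traverse the full interval $(0,1)$.
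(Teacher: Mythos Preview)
Your proof is correct and follows essentially the same approach as the paper: track the ``stationary'' microscopic energy $I(t)=\tfrac12 V_\phi^2+\phi^\infty(X_\phi)$, use Young's inequality to close a Gr\"onwall-type bound $I(t)\geq I(s)e^{s-t}-B(t,s)$, and then read off the velocity lower bound and the exit estimate. The only cosmetic difference is that you work with the differential form $I'+I\geq -g(t)$ and an integrating factor, whereas the paper writes the equivalent integral inequality $I(t)>r^2-\int_s^t I-B$ and invokes a Gr\"onwall lemma; your route makes the step $I(t)\geq I(s)e^{s-t}-\int_s^t e^{\tau-t}g(\tau)\,\dd\tau\geq I(s)e^{s-t}-B(t,s)$ slightly more transparent.
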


To conclude this  study of the generic characteristics let us prove the following elementary lemma.
\begin{lemma} \label{non_vanishing_source_term}Let $\phi \in \mathscr{C}(\RR^{+}; W^{2,\infty}(0,1))$.   Let $P \subset Q$ and fix $(t,x,v)  \in \RR^{+}_{*} \times Q$. Assume there exists $t_{\phi}^{\textnormal{inc}}(t,x,v) <  s < t $ such that $(X_{\phi},V_{\phi})(s;t,x,v) \in P$. Then there exists $(x',v') \in P$ such  $(x,v) = (X_{\phi},V_{\phi})(t;s,x',v')$ and $t_{\phi}^{\textnormal{out}}(s,x',v') > t$.
\begin{proof} Let $P \subset Q$ and  fix $(t,x,v) \in \RR^{+}_{*} \times Q$. \newline
Assume there exists $ s \in (t_{\phi}^{\textnormal{inc}}(t,x,v), t)$ such that $(X_{\phi},V_{\phi})(s;t,x,v) \in P$.    Set $(x',v') = (X_{\phi},V_{\phi})(s;t,x,v) \in P$. Then, $(X_{\phi},V_{\phi})(t;s,x',v') = (x,v).$ Since $t_{\phi}^{\textnormal{inc}}(t,x,v) < s < t$, we have for all $s' \in [s,t]$, $ (X_{\phi},V_{\phi})(s';s,x',v') = (X_{\phi},V_{\phi})(s';t,x,v) \in Q$. Therefore, $t_{\phi}^{\textnormal{out}}(s,x',v') > t.$ 
\end{proof}
\end{lemma}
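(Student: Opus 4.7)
The statement is a structural fact about the autonomous (or semi-autonomous) flow associated with the ODE \eqref{general-char}: if a trajectory hits a target set $P$ at an intermediate time, we can re-parameterize by taking that hit point as a new initial condition. The proof is essentially a bookkeeping exercise using uniqueness for the Cauchy problem, so I would not expect any obstacle.

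The plan is as follows. Fix the data $(t,x,v) \in \RR^{+}_{*} \times Q$ and $s \in \big(t_{\phi}^{\textnormal{inc}}(t,x,v), t\big)$ with $(X_{\phi},V_{\phi})(s;t,x,v) \in P$, and define the candidate point
\[
(x',v') := (X_{\phi},V_{\phi})(s;t,x,v),
\]
which lies in $P$ by assumption. Next, I would invoke the semi-group / uniqueness property of the ODE \eqref{general-char}: since $\partial_{x}\phi$ is Lipschitz in $x$ (thanks to $\phi \in \mathscr{C}(\RR^{+}; W^{2,\infty}(0,1))$ and the extension \eqref{extension_2}), the Cauchy-Lipschitz theorem guarantees that the trajectory is uniquely determined by any of its space-time points. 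Therefore, the two curves $s' \mapsto (X_{\phi},V_{\phi})(s';t,x,v)$ and $s' \mapsto (X_{\phi},V_{\phi})(s';s,x',v')$ coincide on their common interval of existence since they agree at $s' = s$.

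From this identification I would conclude both required properties at once: evaluating at $s' = t$ gives $(X_{\phi},V_{\phi})(t;s,x',v') = (X_{\phi},V_{\phi})(t;t,x,v) = (x,v)$, which is the first claim; and for every $s' \in [s,t]$, since $s > t_{\phi}^{\textnormal{inc}}(t,x,v)$ and $s' \leq t$, the very definition \eqref{inc_time_phi}--\eqref{out_time_phi} of the exit times ensures $X_{\phi}(s';t,x,v) \in (0,1)$, hence $X_{\phi}(s';s,x',v') \in (0,1)$. By the definition of $t_{\phi}^{\textnormal{out}}(s,x',v')$ as the supremum of such times, this forces $t_{\phi}^{\textnormal{out}}(s,x',v') > t$, which is the second claim. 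No delicate estimates or limiting arguments are needed; the only subtlety worth flagging is that one must appeal to uniqueness of the ODE, which in turn relies on the global Lipschitz regularity of $\partial_{x}\phi(s,\cdot)$ provided by the extension \eqref{extension_2}.
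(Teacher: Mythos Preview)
Your proposal is correct and follows essentially the same route as the paper: define $(x',v')$ as the hit point, use uniqueness of the ODE flow to identify the two trajectories, and read off both conclusions from the definition of the incoming and outgoing times. You spell out the appeal to Cauchy--Lipschitz and the extension \eqref{extension_2} a bit more explicitly than the paper does, but the argument is the same.
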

To study the contribution of the initial data and of the source term in the solution of the Vlasov equation \eqref{def_h_nonlinear} we shall make a great use of this lemma in its opposite version: let $P \subset Q$ and fix $(t,x,v) \in \RR^{+}_{*} \times Q$. If there is some $ 0 < s _{0} < t $ such that for all $s > s_{0}$ and all $(x',v') \in P,$  we have $t_{\phi}^{\textnormal{out}}(s,x',v') \leq t$ then for all $s$ such that $ t_{\phi}^{\textnormal{inc}}(t,x,v) < s < t$ we have $(X_{\phi},V_{\phi})(s;t,x,v) \notin P$. 

\subsection{Local stability estimates for \textbf{(PVP)}} \label{sec:loc_stab_est}
From now, we consider 
\[
(h,U) \in \mathscr{C}\big( \RR^{+}; L^{1}(Q)\big) \times  \mathscr{C}\big(\RR^{+}; W^{2,\infty}(0,1)\big)
\]
 a global mild-strong solution to \textbf{(PVP)} associated with an initial data $h_{0} \in \mathscr{A}$ and an equilibrium $(f^{\infty},\phi^{\infty})$ given by Theorem \ref{theorem_equilibrium}. In particular, we have for every $t \geq 0$ and a.e $(x,v) \in Q$
\begin{align}
&h(t,x,v) = \mathbf{1}_{t_{\phi}^{\textnormal{inc}}(t,x,v) = 0} h_{0} \Big( X_{\phi}\Big(0;t,x,v \Big), V_{\phi}\Big(0;t,x,v\Big) \Big) \label{formula_h_PVP}\\
    &+  \int_{0}^{t} \mathbf{1}_{ t_{\phi}^{\textnormal{inc}}(t,x,v)  < s} \partial_{x} U \Big(s, X_{\phi}\Big(s;t,x,v \Big) \Big)  \partial_{v} f^{\infty} \Big(   X_{\phi}\Big(s;t,x,v \Big), V_{\phi}\Big(s;t,x,v\Big) \Big)  \dd s.\nonumber
\end{align}
Thanks to this formula, we deduce the following linear type estimate
\begin{align}
\forall t \geq 0, \quad \| h(t) \|_{L^{1}(Q)} \leq \| h_{0} \|_{L^{1}(Q)} + \| \partial_{v} f^{\infty} \|_{L^{1}(Q)} \int_{0}^{t} \| \partial_{x} U(s') \|_{L^{\infty}(0,1)} \dd s'. \label{l1_estimate}
\end{align}
Thanks to the elliptic estimate  \eqref{pipi}, we deduce for every $t \geq 0$
\begin{align}
 \| h(t) \|_{L^{1}(Q)} \leq    \| h_{0} \|_{L^{1}(Q)}\exp \Big( \frac{2  \| \partial_{v} f^{\infty} \|_{L^{1}(Q)} t}{\lambda^2} \Big), \label{increase_exp} \\
 \| \partial_{x} U(t) \|_{L^{\infty}(0,1)} \leq \frac{2}{\lambda^2} \| h_{0} \|_{L^{1}(Q)} \exp \Big( \frac{2  \| \partial_{v} f^{\infty} \|_{L^{1}(Q)} t}{\lambda^2} \Big) \label{nl_est_dxU}.
\end{align}
Based on the representation formula \eqref{formula_h_PVP} and the stability analysis of the characteristics Lemma \ref{stability_micro_energy}, our goal now is now to show that the $L^{1}$ norm of $h(t)$ verifies a delayed Gronwall type inequality provided the initial fluctuation is small enough in $L^{1}$. Especially, we fix $r > 0$ and we are going to establish that for each $t^{\star} > \tilde{T}_{r}$, provided $\|h_{0}\|_{L^{1}(Q)}$ is small enough we have
\begin{align*}
&\forall t \in [0,t^{\star}], \quad \supp h(t) \subset D^{+}_{\frac{r}{2}},\\
&h(t,x,v) = \int_{t-\tilde{T}_{r}}^{t} \mathbf{1}_{ t_{\phi}^{\textnormal{inc}}(t,x,v)  < s} \partial_{x} U \Big(s, X_{\phi}\Big(s;t,x,v \Big) \Big)  \partial_{v} f^{\infty} \Big(   (X_{\phi}, V_{\phi})(s;t,x,v) \Big)  \dd s,
\end{align*}
for $t \in [\tilde{T}_{r},t^{\star}]$ and a.e $(x,v) \in D^{+}_{\frac{r}{2}}.$
The strategy is as follows:
\begin{itemize}
\item We prove thanks to the bounds \eqref{nl_est_dxU} that $\partial_{x} U$ verifies the smallness condition \eqref{egc_cond} on $J = [0,t^{\star}]$ provided $ \| h_{0} \|_{L^{1}(Q)}$ is small enough and $r >0 $ is chosen large enough. Then our key Lemma \ref{stability_micro_energy} applies.
\item  We then use the fact that the perturbed electric field $-\partial_{x} \phi$ satisfies the initial exit geometric condition at time $\tilde{T}_{r}$. So for $t \geq \tilde{T}_{r}$, in \eqref{formula_h_PVP} the contribution of the initial condition vanishes because either the condition $t_{\phi}^{\textnormal{inc}}(t,x,v) = 0$ is not met or the characteristics at time zero is outside the support of the initial data.
\item  Lastly, we use the fact that the perturbed electric field $-\partial_{x} \phi$ satisfies the internal exit geometric condition at time $\tilde{T}_{r}$ to prove that in $\eqref{formula_h_PVP}$ and for $t > \tilde{T}_{r}$, the piece of integral on $[0,t-\tilde{T}_{r}]$ vanishes because in \eqref{formula_h_PVP} the characteristic starting at time $t$ from $(x,v) \in D^{+}_{r}$ has spent a priori a time longer than $\tilde{T}_{r}$ in $D^{+}_{r}$ so it cannot be in the support of $\partial_{v} f^{\infty}$ for $s \in [0,t-\tilde{T}_{r}]$.\end{itemize}
To reach our goal, we somehow proceed in a reversed way compared to the strategy sketched above. We begin to give a sufficient condition on $\partial_{x} U$ to get the expected formula for $h$ as $t \geq \tilde{T}_{r}.$ We will show at the end of this section that this sufficient condition is fulfilled provided $\|h_{0}\|_{L^{1}(D^{+}_{r})}$ is small enough and $r > 0$ is large enough.
\begin{lemma}\label{non_linear_local_estimates} Let $r > 0$.
Suppose 
\begin{align}
\supp h_{0},  \quad \supp \partial_{v} f^{\infty} \subset D^{+}_{r}
\end{align}
and in addition, that $U \in \mathscr{C}\big( \RR^{+}; W^{2,\infty}(0,1) \big)$ verifies for some $t^{\star}> \tilde{T}_{r} $
\begin{align}
\forall \hat{s} \in [\tilde{T}_{r}; t^{\star} + \tilde{T}_{r}], \: \int_{\hat{s}-\tilde{T}_{r}}^{\hat{s}}  \frac{\| \partial_{x} U(\tau) \|_{L^{\infty}(0,1)}^2}{2} \dd \tau + \tilde{T}_{r}  \| \phi^{\infty} \|_{L^{\infty}(0,1)}  <  r^{2}\Big( \exp(-\tilde{T}_{r}) - \frac{1}{8}\Big). \label{shifted_egc}
\end{align} 
Then we have
\begin{align}
&\forall t \in [0,t^{\star}], \quad \supp h(t) \subset D^{+}_{\frac{r}{2}}, \label{supp_h}\\
&\forall t \in [\tilde{T}_{r},t^{\star}], \quad \| h(t) \|_{L^{1}\Big(D^{+}_{\frac{r}{2}} \Big)} \leq \| \partial_{v} f^{\infty} \|_{L^{1}(D^{+}_{r})} \int_{t - \tilde{T}_{r}}^{t}\| \partial_{x} U(s) \|_{L^{\infty}(0,1)} \dd s, \label{gronwall_h1}\\
&\forall t \in [\tilde{T}_{r},t^{\star}], \quad \| h(t) \|_{L^{1}\Big(D^{+}_{\frac{r}{2}} \Big)} \leq \frac{2\| \partial_{v} f^{\infty} \|_{L^{1}(D^{+}_{r})}}{\lambda^2}  \int_{t - \tilde{T}_{r}}^{t}\| h(s) \|_{L^{1}\Big(D^{+}_{\frac{r}{2}} \Big)} \dd s.\label{gronwall_h2}
\end{align}
\begin{proof}  
We begin to prove \eqref{supp_h}. For $t \in [0,t^{\star}]$ and a.e $(x,v) \in Q$ we have,
\begin{align*}
&h(t,x,v) = \mathbf{1}_{t_{\phi}^{\textnormal{inc}}(t,x,v) = 0} h_{0} \Big( X_{\phi}\Big(0;t,x,v \Big), V_{\phi}\Big(0;t,x,v\Big) \Big)\\
    &+  \int_{0}^{t} \mathbf{1}_{ t_{\phi}^{\textnormal{inc}}(t,x,v)  < s} \partial_{x} U \Big(s, X_{\phi}\Big(s;t,x,v \Big) \Big)  \partial_{v} f^{\infty} \Big(   X_{\phi}\Big(s;t,x,v \Big), V_{\phi}\Big(s;t,x,v\Big) \Big)  \dd s.\nonumber
\end{align*}
We denote for ease 
\[
h_{1}(t,x,v) = \mathbf{1}_{t_{\phi}^{\textnormal{inc}}(t,x,v) = 0} h_{0} \Big( X_{\phi}\Big(0;t,x,v \Big), V_{\phi}\Big(0;t,x,v\Big) \Big)\]
 and 
 \[
 h_{2}(t,x,v) = \int_{0}^{t} \mathbf{1}_{ t_{\phi}^{\textnormal{inc}}(t,x,v)  < s} \partial_{x} U \Big(s, X_{\phi}\Big(s;t,x,v \Big) \Big)  \partial_{v} f^{\infty} \Big(   (X_{\phi},V_{\phi})(s;t,x,v) \Big)  \dd s.
 \]
Let us study separately each term.
\newline
\emph{Study of $h_{1}$.}
Note that if $h_{1}(t,x,v) \neq 0$ therefore $t_{\phi}^{\textnormal{inc}}(t,x,v)= 0$ and \newline $(X_{\phi},V_{\phi})(0,t,x,v) \in D^{+}_{r}.$ Therefore there is $(x',v') \in D^{+}_{r}$ such that \newline $(x,v) = (X_{\phi},V_{\phi})(t;0,x',v')$ and $t_{\phi}^{\textnormal{out}}(0,x',v') > t.$ By assumption, $\partial_{x} U $ verifies \eqref{shifted_egc}  which means that it verifies \eqref{egc_cond} on the interval $J = [0,t^{\star}].$ Since $t \leq t^{\star}$ we deduce by application of Lemma \ref{stability_micro_energy} b) that $(x,v) = (X_{\phi},V_{\phi})(t;0,x',v') \in D^{+}_{\frac{r}{2}}$ provided $t < t_{\phi}^{\textnormal{out}}(0,x',v').$  This shows that $(x,v) \in D^{+}_{\frac{r}{2}}.$
\newline
\emph{Study of $h_{2}$.}
Observe that $h_{2}(t,x,v) \neq 0$ if there is a least one $s$ such  such that $t_{\phi}^{\textnormal{inc}}(t,x,v) < s < t $ and $(X_{\phi},V_{\phi})(s;t,x,v) \in D^{+}_{r}.$ By Lemma \ref{non_vanishing_source_term}, there is $(x',v') \in D^{+}_{r}$ such that $(x,v) = (X_{\phi},V_{\phi})(t;s,x',v')$ and $t_{\phi}^{\textnormal{out}}(s,x',v') > t$.  By assumption, $\partial_{x} U $ verifies \eqref{egc_cond} on the interval $J = [0,t^{\star}].$ Since $t \leq t^{\star}$ we deduce by application of Lemma \ref{stability_micro_energy} b) that $(x,v) = (X_{\phi},V_{\phi})(t;s,x',v') \in D^{+}_{\frac{r}{2}}$ provided $ t <t_{\phi}^{\textnormal{out}}(s,x',v')$. Therefore $(x,v) \in D^{+}_{\frac{r}{2}}$.
The first claim is proven.
\newline
We now prove the estimates \eqref{gronwall_h1} and \eqref{gronwall_h2}. As previously we study separately each term.
\newline
\emph{Study of $h_{1}$.}
By the preceding study, we know that if $h_{1}(t,x,v) \neq 0$ then there is $(x',v') \in D^{+}_{r}$ such that $(x,v) = (X_{\phi},V_{\phi})(t;0,x',v')$ and $t_{\phi}^{\textnormal{out}}(0,x',v') > t$. Since $\partial_{x} U$ verifies \eqref{egc_cond} on $[0,t^{\star}]$, according to Lemma \ref{stability_micro_energy} a),  we have that $-\partial_{x} \phi$ satisfies the initial exit geometric condition at time $\tilde{T}_{r}$ with respect to $D^{+}_{r}$. Therefore it $ t\geq \tilde{T}_{r}$ we have $t \geq t_{\phi}^{\textnormal{out}}(0,x',v')$ and thus $h_{1}(t,x,v) = 0.$

\emph{Study of $h_{2}$.} 
If $h_{2}(t,x,v) \neq 0$ then there is at least one $s$ such that $t_{\phi}^{\textnormal{inc}}(t,x,v) < s < t$ and  $(X_{\phi},V_{\phi})(s;t,x,v) \in D^{+}_{r}.$ By Lemma \ref{non_vanishing_source_term}, there is $(x',v') \in D^{+}_{r}$  such that $(x,v) = (X_{\phi},V_{\phi})(t;s,x',v')$ and $t_{\phi}^{\textnormal{out}}(s,x',v') > t$. Since $\partial_{x} U$ verifies  \eqref{shifted_egc}, it means that $\partial_{x} U$ verifies \eqref{egc_cond} on $J = [0,t^{\star}-\tilde{T}_{r}].$ According to Lemma \ref{stability_micro_energy} a), we deduce that $\partial_{x} \phi$ verifies the internal exit geometric condition in time $\tilde{T}_{r}$ with respect to $D^{+}_{r}$ on $[0,t^{\star}-\tilde{T}_{r}].$  For $s \in [t_{\phi}^{\textnormal{inc}}(t,x,v),t^{\star}-\tilde{T}_{r}]$ we then have $t_{\phi}^{\textnormal{out}}(s,x',v') \leq s + \tilde{T}_{r}.$ Therefore if $s \leq t-\tilde{T}_{r}$  then $t_{\phi}^{\textnormal{out}}(s,x',v') \leq t$. Thus,
\begin{align*}
h_{2}(t,x,v)= \int_{t-\tilde{T}_{r}}^{t} \mathbf{1}_{ t_{\phi}^{\textnormal{inc}}(t,x,v)  < s} \partial_{x} U \Big(s, X_{\phi}\Big(s;t,x,v \Big) \Big)  \partial_{v} f^{\infty} \Big(   (X_{\phi}, V_{\phi})(s;t,x,v) \Big)  \dd s.
\end{align*}
\emph{Conclusion}.
For $t \in [\tilde{T}_{r}, t^{\star}]$ we have for a.e $(x,v) \in D^{+}_{\frac{r}{2}},$
\begin{align*}
h(t,x,v)= \int_{t-\tilde{T}_{r}}^{t} \mathbf{1}_{ t_{\phi}^{\textnormal{inc}}(t,x,v)  < s} \partial_{x} U \Big(s, X_{\phi}\Big(s;t,x,v \Big) \Big)  \partial_{v} f^{\infty} \Big(   (X_{\phi},V_{\phi})(s;t,x,v) \Big)  \dd s.
\end{align*}
We now estimate $\| h(t) \|_{L^{1}\Big(D^{+}_{\frac{r}{2}} \Big)}$. Using a triangular inequality and the Fubini-Tonelli theorem we get
\begin{align*}
&\| h(t) \|_{L^{1}\Big(D^{+}_{\frac{r}{2}} \Big)}\\
&\leq \int_{t - \tilde{T}_{r}}^{t} \int_{D^{+}_{\frac{r}{2}}} \mathbf{1}_{ t_{\phi}^{\textnormal{inc}}(t,x,v)  < s} \left \vert  \partial_{x} U \Big(s, X_{\phi}\Big(s;t,x,v \Big) \Big) \right \vert  \left \vert \partial_{v} f^{\infty} \Big(   (X_{\phi}, V_{\phi})\Big(s;t,x,v\Big) \Big)  \right \vert \dd x \dd v \dd s.
\end{align*}
Using the $L^{\infty}$ bound \eqref{pipi}, we obtain
\begin{align*}
\| h(t) \|_{L^{1}\Big(D^{+}_{\frac{r}{2}} \Big)}\leq  \int_{t - \tilde{T}_{r}}^{t}  \| \partial_{x} U(s) \|_{L^{\infty}(0,1)} \int_{D^{+}_{\frac{r}{2}}} \mathbf{1}_{ t_{\phi}^{\textnormal{inc}}(t,x,v)  < s} \left \vert \partial_{v} f^{\infty} \Big( (X_{\phi}, V_{\phi})(s;t,x,v) \Big)  \right \vert \dd x \dd v \dd s.
\end{align*}
Using the measure preserving change of variable $(x,v) \mapsto (x',v') = (X_{\phi},V_{\phi})(s;t,x,v)$ we  obtain \eqref{gronwall_h1}.
Using the elliptic estimate \eqref{pipi} and the fact that $h$ is supported in $D^{+}_{\frac{r}{2}}$ on $[0,t^{\star}]$ we deduce \eqref{gronwall_h2} from \eqref{gronwall_h1}.
\end{proof}
\end{lemma}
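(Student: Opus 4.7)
The approach exploits the mild formulation \eqref{formula_h_PVP} by splitting $h = h_1 + h_2$ into the initial-data contribution and the source contribution. The key observation is that the substitution $s = \hat s - \tilde T_r$ recasts \eqref{shifted_egc} as the hypothesis \eqref{egc_cond} of Lemma \eqref{stability_micro_energy} with $J = [0, t^\star]$; hence the perturbed field $-\partial_x(\phi^\infty + U)$ satisfies on $[0, t^\star]$ both the internal and the initial exit geometric conditions in time $\tilde T_r$ with respect to $D^+_r$, and characteristics emerging from a point of $D^+_r$ remain in $D^+_{\frac{r}{2}}$ until their exit from $Q$.

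First, to establish \eqref{supp_h}, I would show that if either $h_1(t,x,v)$ or $h_2(t,x,v)$ is nonzero at some $t \in [0, t^\star]$, then $(x,v) \in D^+_{\frac{r}{2}}$. For $h_1$, non-vanishing forces $t_\phi^{\textnormal{inc}}(t,x,v) = 0$ and $(X_\phi, V_\phi)(0;t,x,v) \in \supp h_0 \subset D^+_r$; writing $(x,v) = \mathcal{F}^\phi_{t,0}(x', v')$ with $(x',v') \in D^+_r$ and $t_\phi^{\textnormal{out}}(0,x',v') \geq t$, Lemma \eqref{stability_micro_energy} b) yields $(x,v) \in D^+_{\frac{r}{2}}$. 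For $h_2$, non-vanishing requires some $s \in (t_\phi^{\textnormal{inc}}(t,x,v), t)$ with $(X_\phi, V_\phi)(s;t,x,v) \in \supp \partial_v f^\infty \subset D^+_r$; Lemma \eqref{non_vanishing_source_term} furnishes $(x', v') \in D^+_r$ with $(x,v) = \mathcal{F}^\phi_{t,s}(x', v')$ and $t_\phi^{\textnormal{out}}(s, x', v') > t$, and Lemma \eqref{stability_micro_energy} b) again places $(x,v)$ in $D^+_{\frac{r}{2}}$.

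Next, for $t \in [\tilde T_r, t^\star]$ I would use the two exit geometric conditions to collapse the mild formulation to an integral over $[t - \tilde T_r, t]$. The initial condition forces $t_\phi^{\textnormal{out}}(0, x', v') \leq \tilde T_r \leq t$ for every $(x', v') \in D^+_r$, which by the contrapositive of Lemma \eqref{non_vanishing_source_term} rules out a backward characteristic from $(t,x,v)$ reaching $\supp h_0$ at time $0$ while staying in $Q$; thus $h_1(t, x, v) = 0$. Likewise the internal condition gives $t_\phi^{\textnormal{out}}(s, x', v') \leq s + \tilde T_r \leq t$ whenever $s \leq t - \tilde T_r$ and $(x', v') \in D^+_r$, which by the same contrapositive argument kills the source integrand on $[0, t - \tilde T_r]$.

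From the resulting truncated representation, \eqref{gronwall_h1} follows by the triangle inequality, Fubini, pulling $\|\partial_x U(s)\|_{L^\infty(0,1)}$ out of the spatial integral, and the measure-preserving change of variable $(x,v) \mapsto \mathcal{F}^\phi_{s,t}(x,v)$, which bounds the remaining integral by $\|\partial_v f^\infty\|_{L^1(D^+_r)}$. The estimate \eqref{gronwall_h2} then follows from the elliptic bound \eqref{pipi} combined with $\supp h(s) \subset D^+_{\frac{r}{2}}$ from \eqref{supp_h}, since $\|\rho_h(s)\|_{L^1(0,1)} \leq \|h(s)\|_{L^1(D^+_{r/2})}$. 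The subtlest bookkeeping issue is to verify that the range $\hat s \in [\tilde T_r, t^\star + \tilde T_r]$ in \eqref{shifted_egc} is precisely what is needed to activate Lemma \eqref{stability_micro_energy} both for the support statement (windows based at any $s \in [0, t^\star]$ of length $\tilde T_r$) and for the integral truncation (which uses the internal exit condition up to times $s + \tilde T_r$ possibly as large as $t^\star + \tilde T_r$), so that the entire argument goes through under the single smallness assumption.
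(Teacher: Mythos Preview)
Your proposal is correct and follows essentially the same approach as the paper's proof: split $h=h_1+h_2$, invoke Lemma \eqref{stability_micro_energy} (via the translation of \eqref{shifted_egc} into \eqref{egc_cond} on $J=[0,t^\star]$) together with Lemma \eqref{non_vanishing_source_term} to obtain the support statement, then use the initial and internal exit geometric conditions to kill $h_1$ and truncate the $h_2$ integral to $[t-\tilde T_r,t]$, and finish with Fubini, the measure-preserving change of variable, and the elliptic estimate \eqref{pipi}. One small remark on your bookkeeping comment: the full window $\hat s\in[\tilde T_r,t^\star+\tilde T_r]$ is actually needed for the \emph{support} argument (where the starting time $s$ of the forward characteristic can approach $t^\star$), whereas the integral truncation only requires the internal exit condition for $s\le t-\tilde T_r\le t^\star-\tilde T_r$; this does not affect correctness.
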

As expected, we conclude this section by showing that the condition \eqref{shifted_egc}  is satisfied provided the norm of $h_{0}$ is chosen small enough and $r$ is large enough. We recall that $\vert \phi_{b} \vert = \| \phi^{\infty} \|_{L^{\infty}(0,1)}$ because $\phi^{\infty}$ is monotone decreasing with $\phi^{\infty}(0) = 0$ and $\phi^{\infty}(1) = \phi_{b}.$  We also recall that $r^{\star}(\phi_{b})$ is the unique zero of the function $\delta$ defined in \eqref{delta_r} and that for $r > r^{\star}(\phi_{b})$ we have $\delta(r) > 0.$

\begin{lemma}\label{ultimate_lemma} Let $r > r^{\star}(\phi_{b})$ and fix $t^{\star} > \tilde{T}_{r}$. Let $h_{0} \in \mathscr{A}$ such that $\supp h_{0} \subset D^{+}_{r}$, and let $(f^{\infty},\phi^{\infty})$ the equilibrium given by Theorem \ref{theorem_equilibrium} be such that $\supp \partial_{v} f^{\infty} \subset D^{+}_{r}$. There exists $\varepsilon_{0} \equiv \varepsilon_{0}\Big( t^{\star},r, \| \partial_{v} f^{\infty} \|_{L^{1}(D^{+}_{r})} \Big) > 0$ such that if
\begin{align}
\| h_{0} \|_{L^{1}(D^{+}_{r})} < \varepsilon_{0} \label{smallness_condition_initial_data}
\end{align}
then any global mild strong solution $(h,U) \in \mathscr{C}(\RR^{+}; L^{1}(Q)) \times \mathscr{C}(\RR^{+};W^{2,\infty}(0,1))$ to \textbf{(PVP)} associated with an initial data $h_{0}$ and a stationary solution $(f^{\infty},\phi^{\infty})$ as above satisfies \eqref{shifted_egc} and thus the conclusion of Lemma \ref{non_linear_local_estimates}.
\begin{proof} Fix $r >  r^{\star}$ and $t^{\star} > \tilde{T}_{r}$.  Let $(h,U) \in \mathscr{C}(\RR^{+}; L^{1}(Q)) \times \mathscr{C}(\RR^{+};W^{2,\infty}(0,1))$  be a global mild strong solution to \textbf{(PVP)} associated with an initial data $h_{0}$ and an equilibrium $(f^{\infty},\phi^{\infty})$ such that $\supp \partial_{v} f^{\infty} \subset D^{+}_{r}$. We obtain at first, thanks to the a  priori exponential growth estimate \eqref{nl_est_dxU}, for all $s \in [0,t^{\star}]$

\begin{align*}
&\int_{s}^{s + \tilde{T}_{r}}  \frac{\| \partial_{x} U(\tau) \|_{L^{\infty}(0,1)}^2}{2} \dd \tau \leq \frac{2}{\lambda^2} \| h_{0} \|_{L^{1}(D^{+}_{r})}^2  \exp\left( \frac{4 \| \partial_{v} f^{\infty} \|_{L^{1}(D^{+}_{r})} (t^{\star}+\tilde{T}_{r}) }{\lambda^2} \right)  \tilde{T}_{r}.
\end{align*}
A sufficient condition is then
\begin{align}
\frac{2}{\lambda^2} \| h_{0} \|_{L^{1}(D^{+}_{r})}^2  e^{\frac{4 \| \partial_{v} f^{\infty} \|_{L^{1}(D^{+}_{r})} (t^{\star}+\tilde{T}_{r}) }{\lambda^2} }  \tilde{T}_{r} < \delta(r). \label{sufficient_condition_h0}
\end{align}
where $\delta(r) > 0$ since $r > r^{\star}.$
For $\| h_{0} \|_{L^{1}(D^{+}_{r})}$ small enough it is trivially satisfied. Therefore there is $\varepsilon_{0} \equiv \varepsilon_{0}\Big( t^{\star},r, \| \partial_{v} f^{\infty} \|_{L^{1}(D^{+}_{r})} \Big) > 0$ such that if $\| h_{0} \|_{L^{1}(D^{+}_{r})} < \varepsilon_{0} $ we have
\[
\forall \hat{s} \in [\tilde{T}_{r}; t^{\star} + \tilde{T}_{r}], \quad \int_{\hat{s}-\tilde{T}_{r}}^{\hat{s}}  \frac{\| \partial_{x} U(\tau) \|_{L^{\infty}(0,1)}^2}{2} \dd \tau + \tilde{T}_{r}  \| \phi^{\infty} \|_{L^{\infty}(0,1)}  <  r^{2}\Big( \exp(-\tilde{T}_{r}) - \frac{1}{4}\Big).
\]
Thus \eqref{shifted_egc} is satisfied and Lemma \ref{non_linear_local_estimates} applies.
\end{proof}
\end{lemma}

\subsection{The continuation argument and the global exponential decay}\label{sec:continuation}
We now come to the final argument which consists in establishing that the estimate \eqref{gronwall_h1} holds globally in time. We fix $\phi_{b} < 0$ and consider $r > r^{\star}(\phi_{b})$ and $\mu$ satisfying the assumption of Theorem \ref{theorem_equilibrium} with in addition $\supp \mu \subset (r,+\infty)$. Let $\lambda > 0$ and consider the stationary solution $(f^{\infty},\phi^{\infty})$ given by Theorem \ref{theorem_equilibrium}. In particular we have $\supp \partial_{v} f^{\infty} \subset D^{+}_{r}.$  Applying Lemma \ref{ultimate_lemma} with $t^{\star} = 2\tilde{T}_{r}$ we know that there exists a positive constant $\varepsilon_{0} \equiv \varepsilon_{0}(r, \| \partial_{v} f^{\infty} \|_{L^{1}(D^{+}_{r})})$ such that for any $h_{0} \in \mathscr{A}$ such that
\begin{align}
\supp h_{0} \subset D^{+}_{r}, \quad \| h_{0} \|_{L^{1}(D^{+}_{r})} < \varepsilon_{0}
\end{align}
then any global mild strong solution $(h,U)$ a global mild solution to \textbf{(PVP)} verifies
\begin{align}
\forall \hat{s} \in [\tilde{T}_{r}, 3 \tilde{T}_{r}], \quad \int_{\hat{s}-\tilde{T}_{r}}^{\hat{s}}  \frac{\| \partial_{x} U(\tau) \|_{L^{\infty}(0,1)}^2}{2} \dd \tau < \delta(r) \label{zoro}
\end{align}
where $\delta(r)$ is given in \eqref{delta_r}.
Let us define
\begin{align}
t_{c} = \sup \Big \lbrace t > 2 \tilde{T}_{r} \:  : \: \forall \hat{s} \in [\tilde{T}_{r}, t + \tilde{T}_{r}] ,\:   \int_{\hat{s}-\tilde{T}_{r}}^{\hat{s}}  \frac{\| \partial_{x} U(\tau) \|_{L^{\infty}(0,1)}^2}{2} \dd \tau < \delta(r)  \Big \rbrace. \label{def_t_star}
\end{align}
Since \eqref{zoro} holds at $\hat{s} = 3\tilde{T}_{r}$, a continuity argument shows that $t_{c}$ is well-defined (it is either finite or infinite). Our goal is to prove that $t_{c} = +\infty$. We argue by contradiction and assume that $t_{c} < +\infty.$
By virtue of Lemma \ref{non_linear_local_estimates} we have
\begin{align}
&\forall t \in [0,t_{c}), \quad  \supp h(t) \subset D^{+}_{\frac{r}{2}}, \label{winnie}\\
&\forall t \in [\tilde{T}_{r}, t_{c}), \quad \| h(t) \|_{L^{1}\Big(D^{+}_{\frac{r}{2}} \Big)} \leq \| \partial_{v} f^{\infty} \|_{L^{1}(D^{+}_{r})} \int_{t - \tilde{T}_{r}}^{t}\| \partial_{x} U(s) \|_{L^{\infty}(0,1)} \dd s,\label{daffy_duck}\\
&\forall t \in [\tilde{T}_{r}, t_{c}), \quad \| h(t) \|_{L^{1}\Big(D^{+}_{\frac{r}{2}} \Big)} \leq \frac{2\| \partial_{v} f^{\infty} \|_{L^{1}(D^{+}_{r})}}{\lambda^2}  \int_{t - \tilde{T}_{r}}^{t}\| h(s) \|_{L^{1}\Big(D^{+}_{\frac{r}{2}} \Big)} \dd s. \label{blanche-neige}
\end{align}
Using \eqref{daffy_duck} and \eqref{def_t_star}, we obtain after a Cauchy-Schwarz inequality, for all $t \in [\tilde{T}_{r},t_{c} \big)$
\begin{align}
\| h(t) \|_{L^{1}\Big(D^{+}_{\frac{r}{2}} \Big)} \leq \| \partial_{v} f^{\infty} \|_{L^{1}(D^{+}_{r})} \sqrt{ 2 \tilde{T}_{r} \delta_{r} }.
\end{align}
By continuity of the map $t \in \RR^{+} \longmapsto \| h(t) \|_{L^{1}(Q)}$ we obtain,
\begin{equation} \label{estimate_h_t_star}
\| h(t_{c}) \|_{L^{1}\Big( D^{+}_{\frac{r}{2}} \Big) }\leq \| \partial_{v} f^{\infty} \|_{L^{1}(D^{+}_{r})} \sqrt{ 2 \tilde{T}_{r} \delta_{r} }.
\end{equation}
Let consider $h(t_{c})$ as an initial data for the Vlasov equation. A Grönwall lemma combined with \ref{estimate_h_t_star} yields for $t \geq t_{c}$
\begin{align}
\| h(t) \|_{L^{1}(Q)} \leq  \| \partial_{v} f^{\infty} \|_{L^{1}(D^{+}_{r})} \sqrt{ 2 \tilde{T}_{r} \delta_{r} }\exp \Big( \frac{2  \| \partial_{v} f^{\infty} \|_{L^{1}(Q)} (t-t_{c})}{\lambda^2} \Big).
\end{align}
Using the elliptic estimate \eqref{pipi} we obtain  for $t \geq t_{c},$
\begin{align}
\| \partial_{x} U (t) \|_{L^{\infty}(0,1)} \leq \frac{2}{\lambda^2} \| \partial_{v} f^{\infty} \|_{L^{1}(D^{+}_{r})} \sqrt{ 2 \tilde{T}_{r} \delta_{r} } \exp \Big( \frac{2  \| \partial_{v} f^{\infty} \|_{L^{1}(Q)} (t-t_{c})}{\lambda^2} \Big). \label{simba}
\end{align}
Using \eqref{simba}, we have  for $s  \in [t_{c}, t_{c}+ \tilde{T}_{r}]$
\begin{align}
&\int_{s}^{s + \tilde{T}_{r}}  \frac{\| \partial_{x} U(\tau) \|_{L^{\infty}(0,1)}^2}{2} \dd \tau \leq \int_{t_{c}}^{t_{c} + 2\tilde{T}_{r}} \frac{\| \partial_{x} U(\tau) \|_{L^{\infty}(0,1)}^2}{2} \dd \tau \leq\\
&\frac{4}{\lambda^2 } \| \partial_{v} f^{\infty} \|_{L^{1}(D^{+}_{r})}   \tilde{T}_{r} \delta_{r}  \left( \exp\left( \frac{8 \| \partial_{v} f^{\infty} \|_{L^{1}(D^{+}_{r})} \tilde{T}_{r} }{\lambda^2} \right) - 1 \right) . 
\end{align}
We therefore see that if 
\begin{align}
\frac{4}{\lambda^2 } \| \partial_{v} f^{\infty} \|_{L^{1}(D^{+}_{r})}   \tilde{T}_{r}  \left( \exp\left( \frac{12 \| \partial_{v} f^{\infty} \|_{L^{1}(D^{+}_{r})} \tilde{T}_{r} }{\lambda^2} \right) - 1  \right)  < 1 \label{mickey}
\end{align}
then for all $\hat{s} \in [t_{c} + \tilde{T}_{r}, t_{c}+ 2\tilde{T}_{r}]$,
\begin{align}
\int_{\hat{s}-\tilde{T}_{r}}^{\hat{s}}  \frac{\| \partial_{x} U(\tau) \|_{L^{\infty}(0,1)}^2}{2} \dd \tau < \delta(r)
\end{align}
which contradicts the maximality of $t_{c}.$ We remark that the inequality \eqref{mickey} depends only $\| \partial_{v} f^{\infty} \|_{L^{1}(D^{+}_{r})}$, $\lambda^2$ and $r$ but not on $h_{0}$. A continuity argument then shows that there exists a positive constant $\varepsilon_{\infty} \equiv \varepsilon_{\infty}(r, \lambda^2)$  such that if
\begin{align}
\| \partial_{v} f^{\infty} \|_{L^{1}(D^{+}_{r})} < \varepsilon_{\infty}
\end{align}
then \eqref{mickey} holds. Reducing $\varepsilon_{\infty}$ (if necessary) in such a way that $\varepsilon_{\infty} < \frac{\lambda^2}{4 T_{r}}$ we additionally obtain
\begin{align}
\frac{2 \| \partial_{v} f^{\infty} \|_{L^{1}(D^{+}_{r}) } \tilde{T}_{r} }{\lambda^2} < 1.
\end{align}
\emph{Conclusion.} We have proven that there exists a constant $\varepsilon_{\infty} \equiv \varepsilon_{\infty}(r,\lambda^2)$ such that
if $ \| \partial_{v} f^{\infty} \|_{L^{1}(D^{+}_{r})} \| < \varepsilon_{\infty}$ then \eqref{mickey} holds. Then we observe thanks to the proof of Lemma \ref{ultimate_lemma} that if  $h_{0}$ verifies
\begin{align}
\frac{2}{\lambda^2} \| h_{0} \|_{L^{1}(D^{+}_{r})}^2  e^{\frac{12 \tilde{T}_{r} \varepsilon_{\infty} }{\lambda^2} }  \tilde{T}_{r} < \delta(r). 
\end{align}
then Lemma \ref{ultimate_lemma} applies  for $t^{\star} = 2 \tilde{T}_{r}$. This inequality only depends on $r$ and $\lambda^2$. Thus, we have proven that there exists positive constants $\varepsilon_{\infty} \equiv \varepsilon(r,\lambda^2)$ and $\varepsilon_{0} \equiv \varepsilon_{0}(r,\lambda^2, \varepsilon_{\infty})$ such that if
\begin{align}
\| h_{0} \|_{L^{1}(D^{+}_{r})} < \varepsilon_{0}, \quad  \| \partial_{v} f^{\infty} \|_{L^{1}(D^{+}_{r})} < \varepsilon_{\infty}
\end{align}
then $t_{c} = +\infty$. Thus, any global mild strong solution to \textbf{(PVP)} verifies \eqref{winnie}-\eqref{blanche-neige} with $t_{c} = +\infty$. The delayed Grönwall Lemma \ref{delayed_Gronwall_lemma} thus applies and we obtain the estimate \eqref{Graal_1}. The estimate \eqref{Graal_2} is a consequence of the non linear elliptic estimate \eqref{pipi}. Theorem \ref{non_linear_stab_thm} is proven.

\section*{Acknowledgments}
The author thanks Ludovic Godard-Cadillac for his kind invitation at Institut des Mathématiques de Bordeaux where the early stage of this work was presented. The author is also grateful to Daniel HanKwan for his listening and his expertise. This work has been conducted with the financial support of the ANR project Muffin (ANR-19-CE46-0004).

\bibliographystyle{siamplain}
\bibliography{references}
\end{document}